% %%%%%%%%%%%%%%%%AMS-LaTeX Paper ************************************************
% **** -----------------------------------------------------------
\documentclass[12pt]{amsart}
\usepackage{graphicx}
\usepackage{amsmath}
\usepackage{amssymb}
\usepackage{setspace}
\marginparwidth -1cm \oddsidemargin 0cm \evensidemargin 0cm
\topmargin 0pt \textheight 230mm \textwidth 169mm

% ----------------------------------------------------------------
\vfuzz2pt % Don't report over-full v-boxes if over-edge is small
\hfuzz2pt % Don't report over-full h-boxes if over-edge is small
% THEOREMS -------------------------------------------------------
\newtheorem{thm}{Theorem}[section]
\newtheorem{cor}[thm]{Corollary}

\newtheorem{lem}[thm]{Lemma}
\newtheorem{prop}[thm]{Proposition}
\theoremstyle{definition}
\newtheorem{defn}[thm]{Definition}
\theoremstyle{remark}
\newtheorem{rem}[thm]{Remark}
\theoremstyle{question}
\numberwithin{equation}{section}
%%%%%%%%%%%%%%%%%%%%%%%%%%%% Below is  main body

\begin{document}
%%%%%%%%%%%%%%%%%%%%%%%%%%%%%%%%%%The title and abstract

\title[2D Schr\"{o}dinger-Newton equations]{Existence and symmetry of solutions to 2-D Schr\"{o}dinger-Newton equations}

\author{Daomin Cao, Wei Dai, Yang Zhang}
\address{School of Mathematics and Information Science, Guangzhou University, Guangzhou 510405, People¡¯s Republic of China}
\email{dmcao@amt.ac.cn}

\address{School of Mathematical Sciences, Beihang University (BUAA), Beijing 100083, P. R. China, and LAGA, UMR 7539, Institut Galil\'{e}e, Universit\'{e} Sorbonne Paris Cit\'e, 93430 - Villetaneuse, France}
\email{weidai@buaa.edu.cn}

\address{School of Mathematics and Statistics, Central South University, Changsha 410075, People¡¯s Republic of China}
\email{zhangyang@amss.ac.cn}

\thanks{D. Cao was supported by NNSF of China (No. 11831009) and Chinese Academy of Sciences (No. QYZDJ-SSW-SYS021). W. Dai was supported by the NNSF of China (No. 11971049), the Fundamental Research Funds for the Central Universities and the State Scholarship Fund of China (No. 201806025011).}

%%%%%%%%%%%%%%%%%%%%%% The abstract
\begin{abstract}
In this paper, we consider the following 2-D Schr\"{o}dinger-Newton equations
\begin{eqnarray*}
-\Delta u+a(x)u+\frac{\gamma}{2\pi}\left(\log(|\cdot|)*|u|^p\right){|u|}^{p-2}u=b{|u|}^{q-2}u \qquad \text{in} \,\,\, \mathbb{R}^{2},
\end{eqnarray*}
where $a\in C(\mathbb{R}^{2})$ is a $\mathbb{Z}^{2}$-periodic function with $\inf_{\mathbb{R}^{2}}a>0$, $\gamma>0$, $b\geq0$, $p\geq2$ and $q\geq 2$. By using ideas from \cite{CW,DW,Stubbe}, under mild assumptions, we obtain existence of ground state solutions and mountain pass solutions to the above equations for $p\geq2$ and $q\geq2p-2$ via variational methods. The auxiliary functional $J_{1}$ plays a key role in the cases $p\geq3$. We also prove the radial symmetry of positive solutions (up to translations) for $p\geq2$ and $q\geq 2$. The corresponding results for planar Schr\"{o}dinger-Poisson systems will also be obtained. Our theorems extend the results in \cite{CW,DW} from $p=2$ and $b=1$ to general $p\geq2$ and $b\geq0$.
\end{abstract}

\maketitle

%%%%%%%%%%%%%%%%%%%%%%%%%The key words
{\small {\bf Keywords:} Logarithmic convolution potential, variational methods, Schr\"{o}dinger-Newton equations, Schr\"{o}dinger-Poisson systems, positive solutions, radial symmetry. \\

{\bf 2010 MSC} Primary: 35J20; Secondary: 35Q40, 35B09, 35B06.}

%%%%%%%%%%%%%%%%%%%%% Below is the Introuction section
\section{Introduction}
In this paper, we consider the following 2-D Schr\"{o}dinger-Newton equations
\begin{equation}\label{S-N}
  -\Delta u+a(x)u+\frac{\gamma}{2\pi}\left(\log(|\cdot|)*|u|^p\right){|u|}^{p-2}u=b{|u|}^{q-2}u \qquad \text{in} \,\,\, \mathbb{R}^{2},
\end{equation}
where $a\in C(\mathbb{R}^{2})$ is a $\mathbb{Z}^{2}$-periodic function with $\inf_{\mathbb{R}^{2}}a>0$, $\gamma>0$, $b\geq0$, $p\geq2$ and $q\geq 2$. Solutions to Schr\"{o}dinger-Newton equations \eqref{S-N} also solve the following Schr\"{o}dinger-Poisson systems (also called Schr\"{o}dinger-Maxwell systems)
\begin{equation}\label{S-P}
  \left\{\begin{array}{ll}{-\Delta u+a(x) u+\gamma w|u|^{p-2}u=b|u|^{q-2}u} & {\text { in } \mathbb{R}^{2},} \\ {\Delta w=|u|^{p}} & {\text { in } \mathbb{R}^{2}.}\end{array}\right.
\end{equation}

Schr\"{o}dinger-Newton equations \eqref{S-N} and Schr\"{o}dinger-Poisson systems \eqref{S-P} have numerous important applications in physics and model many phenomena in quantum mechanics (see \cite{BL,FL,L,Lions,MPT,P}). In general dimensions $\mathbb{R}^{d}$, solution $u$ to Schr\"{o}dinger-Poisson systems gives a standing (or solitary) wave solution $\psi(x,t)=e^{-i\lambda t}u(x)$ ($\lambda\in\mathbb{R}$) of dynamic Schr\"{o}dinger-Poisson systems of the type
\begin{equation}\label{D-S-P}
  \left\{\begin{array}{ll}{i\partial_{t}\psi-\Delta\psi+E(x)\psi+\gamma w|\psi|^{p-2}\psi=b|\psi|^{q-2}\psi} & {\text { in } \mathbb{R}^{d}\times\mathbb{R},} \\ {\Delta w=|\psi|^{p}} & {\text { in } \mathbb{R}^{d}\times\mathbb{R},}\end{array}\right.
\end{equation}
where $E(x)=a(x)-\lambda$ is a real external potential, the function $w$ represents an internal potential for a nonlocal self-interaction of the wave function $\psi$.

For space dimension $d=3$, \eqref{S-N} and \eqref{S-P} have been extensively studied. In the cases $p=2$, $\gamma>0$ and $b=0$, \eqref{S-N} and \eqref{S-P} were introduced by Pekar \cite{Pekar} in 1954 to describe the quantum mechanics of a polaron at rest and by Choquard in 1976 to describe an electron trapped in its hole. In \cite{P}, Penrose obtained \eqref{S-N} in his discussion about the self gravitational collapse of a quantum-mechanical system. Lieb \cite{L} derived the existence of a unique ground state of \eqref{S-P} which is positive and radially symmetric, by using a minimization argument. Lions proved in \cite{Lions} the existence of infinitely many distinct radially symmetric solutions when $a(x)$ is a nonnegative and radially symmetric potential. In the cases $b\neq0$, there are also a large amount of papers devoted to the study of existence of solutions for \eqref{S-N} and \eqref{S-P}. For results on the cases $\gamma<0$, $b>0$ and $2<q<6$, please refer to \cite{AR,R}. In the cases $\gamma>0$ and $b<0$, \eqref{S-P} represents a Hartree model for crystals (see \cite{Mu}). For more related results in three and higher dimensions, please refer to \cite{BL,BS,CD,CL1,DFHQW,DFQ,DL,DQ,FL,MS,MS1,MZ,TT} and the references therein.

In this paper, we will investigate the existence of ground state solutions and symmetry of positive solutions for \eqref{S-N} and \eqref{S-P} in two dimension case. For space dimension $d=2$, there are very few literature (see \cite{BCS,CS,CSV,CW,DW,Stubbe}). Unlike higher dimension cases $d\geq3$, since the corresponding energy functional is not well-defined on $H^{1}(\mathbb{R}^{2})$, variational methods have rarely been used in the planar case. For the cases that $a\geq0$ is a constant, $\gamma>0$, $p=2$ and $b=0$, Stubbe \cite{Stubbe} set up a variational framework for \eqref{S-N} within a subspace of $H^{1}(\mathbb{R}^{2})$. Among other things, he derived the existence of a unique ground state solution which is a positive spherically symmetric decreasing function, by using strict rearrangement inequalities. For the cases $a\in C(\mathbb{R}^{2})$ is a $\mathbb{Z}^{2}$-periodic function with $\inf_{\mathbb{R}^{2}}a>0$, $\gamma>0$, $b\geq0$, $p=2$ and $q\geq 4$, by developing new ideas and estimates within the variational framework, Cingolani and Weth \cite{CW} proved the existence of ground states and high energy solutions for \eqref{S-N} and \eqref{S-P}. Their key tool is a surprisingly strong compactness condition for Cerami sequences which is not available for the corresponding problem in higher space dimensions. Subsequently, Du and Weth \cite{DW} removed the restriction $q\geq4$ in \cite{CW} and derived the existence of ground states and high energy solutions for \eqref{S-N} and \eqref{S-P}, by exploring the more complicated underlying functional geometry in the case $2<q<4$ with a different variational approach.

Following ideas from \cite{CW,Stubbe}, we will apply variational methods to study the existence of ground state solutions to 2-D Schr\"{o}dinger-Newton equation \eqref{S-N} with general $p\geq2$ and $q\geq 2p$. More precisely, we will consider the following energy functional:
\begin{eqnarray}\label{1-1}
  && I(u):=\frac{1}{2}\int_{\mathbb{R}^2}\left(|\nabla u|^2+a(x)u^2\right)dx+\frac{\gamma}{4p\pi}\int_{\mathbb{R}^2}\int_{\mathbb{R}^2}\log\left(|x-y|\right)|u|^p(x)|u|^p(y)dxdy \\
  \nonumber && \qquad\quad -\frac{b}{q}\int_{\mathbb{R}^{2}}|u|^qdx,
\end{eqnarray}
which is not well-defined on $H^{1}(\mathbb{R}^{2})$. Inspired by Stubbe \cite{Stubbe}, we will consider $I$ in the smaller Banach subspace:
\begin{equation}\label{1-2}
  X:=\left\{ u\in H^1(\mathbb{R}^2):\int_{\mathbb{R}^2}\log{(1+|x|)}|u|^p(x)dx<\infty\right\}=H^1 \cap L^p(d \mu)
\end{equation}
with the Radon measure $d\mu:=\log{(1+|x|)}dx$. By Lemma \ref{lemma1} in Section 2, we can see that the energy functional $I$ defines a $C^1$-functional on $X$, moreover, Schr\"{o}dinger-Newton equation \eqref{S-N} is the corresponding Euler-Lagrange equation for $I$, critical points $u\in X$ of $I$ are strong solutions of \eqref{S-N} in $W^{2,r}(\mathbb{R}^2)$ for all $r\geq 1$, and they are classical solutions in $C^{2}(\mathbb{R}^2)$ if $a$ is H\"{o}lder continuous. Thus $X$ provides a variational framework for \eqref{S-N}. Nevertheless, we should note that the norm of $X$ is not translation invariant, while the energy functional $I$ is invariant under any $\mathbb{Z}^{2}$-translations. Thus there will be more additional difficulties than higher dimension cases $d\geq3$.

By using the methods from \cite{CW}, we can prove that the ground state energy
\begin{equation}\label{1-3}
  c_g:=\inf\left\{I(u): \, u\in X\setminus \{0\}, \, I'(u)=0\right\}
\end{equation}
can be attained, and hence derive the ground state solutions for \eqref{S-N}. Our first main result is the following theorem.
\begin{thm}\label{thm1}
Assume $p\geq2$ and $q\geq 2p$. Then \eqref{S-N} admits a pair of ground state solutions $\pm u \in X\setminus\{0\}$ such that $I(u)=c_g$. Moreover, the restriction of $I$ to the associated Nehari manifold $\mathcal{N}:=\left \{u\in X\setminus \{0\}:\langle I^{\prime}(u),u\rangle=0\right\}$ admits a global minimum and each minimizer $u\in\mathcal{N}$ of $I\big|_{\mathcal{N}}$ is also a (ground state) solution to \eqref{S-N} which doesn't change sign and obeys the mini-max characterization
\begin{equation*}
  I(u)=\inf_{u \in X}\sup_{t\in\mathbb{R}}I(tu).
\end{equation*}
\end{thm}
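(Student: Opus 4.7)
My plan is to obtain Theorem \ref{thm1} via a Nehari manifold minimization, following Cingolani--Weth \cite{CW} and adapting their arguments to the exponent range $p\geq 2$, $q\geq 2p$. Writing $\log|x-y|=\log(1+|x-y|)-\log(1+1/|x-y|)$, I split the logarithmic double integral as $V_0(u,u)=V_1(u,u)-V_2(u,u)$ with $V_1,V_2\geq 0$. The space $X$ is precisely the natural setting on which $V_1(u,u)$ is finite, while $V_2$ is controlled by $\|u\|_{L^s}^{2p}$ via the inequality $\log(1+1/r)\leq 1/r$ together with the Hardy--Littlewood--Sobolev and Sobolev embeddings on $\mathbb{R}^2$.

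The first step is the fibering analysis: for every $u\in X\setminus\{0\}$ I study $\varphi_u(t):=I(tu)$ and $\psi_u(t):=\varphi_u'(t)/t$. Using $q\geq 2p$ together with a monotonicity comparison of the two power terms, I show that $\psi_u$ has exactly one positive zero $t_u$ at which $\varphi_u$ attains its unique positive maximum. This produces a bijection from directions in $X\setminus\{0\}$ onto $\mathcal{N}$ and yields the mini-max identity
\begin{equation*}
c:=\inf_{\mathcal{N}}I=\inf_{u\in X\setminus\{0\}}\sup_{t\geq 0}I(tu)=\inf_{u\in X\setminus\{0\}}\sup_{t\in\mathbb{R}}I(tu),
\end{equation*}
the last equality by the evenness $I(-u)=I(u)$. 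The identity $I(u)=I(u)-\tfrac{1}{2p}\langle I'(u),u\rangle$ on $\mathcal{N}$ eliminates the $V_0$-term and reduces to a positive combination of $\|u\|_*^2:=\int(|\nabla u|^2+au^2)$ and $\|u\|_q^q$; this gives $c>0$ and a uniform $H^1$-bound on every minimizing sequence $(u_n)\subset\mathcal{N}$. Recovering a bound on $V_1(u_n,u_n)$ from $I(u_n)$ and $\|u_n\|_*$ then bounds $\int\log(1+|x|)|u_n|^p$, so $(u_n)$ is bounded in $X$.

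The main obstacle is compactness, since $X\hookrightarrow H^1(\mathbb{R}^2)$ is not compact and the $X$-norm is not $\mathbb{Z}^2$-translation invariant, although $I$ and $\mathcal{N}$ are. I would exploit this invariance via a Lions-type concentration argument in $H^1$: either $u_n\to 0$ in $L^s(\mathbb{R}^2)$ for every $s\in(2,\infty)$, in which case the $V_2$ estimate combined with the Nehari identity forces $\|u_n\|_*\to 0$, contradicting $c>0$; or there exist $z_n\in\mathbb{Z}^2$ such that $v_n:=u_n(\cdot+z_n)$ satisfies $v_n\rightharpoonup u\neq 0$ weakly in $H^1(\mathbb{R}^2)$. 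Since integer translations preserve $I$ and $\mathcal{N}$, $(v_n)$ is still minimizing on $\mathcal{N}$; Fatou applied to the $d\mu$-integral places $u\in X$.

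Finally I identify $u$ as the minimizer. Local compactness together with a Brezis--Lieb-type splitting of $V_1$ and $V_2$, exactly as in \cite{CW}, yields $\langle I'(u),\varphi\rangle=0$ for all $\varphi\in C_c^\infty(\mathbb{R}^2)$, so $u$ solves \eqref{S-N} and lies in $\mathcal{N}$; weak lower semicontinuity of $\|\cdot\|_*^2$ and $V_1$, combined with convergence of the $L^q$ and $V_2$ terms, gives $I(u)\leq\liminf I(v_n)=c$, hence $I(u)=c$. Since every nontrivial critical point of $I$ belongs to $\mathcal{N}$ and is therefore bounded below by $c$, we obtain $c=c_g$. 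The Lagrange multiplier at a minimizer vanishes because $\psi_u'(1)\neq 0$ at any Nehari point (the fibering has a strict transversal crossing at $t=1$). For the sign property, I note that $I$ and the Nehari constraint depend on $u$ only through $|u|$ (using $|\nabla|u||=|\nabla u|$ a.e.), so $|u|\in\mathcal{N}$ is also a minimizer and hence a classical solution; the strong maximum principle gives $|u|>0$, and continuity of $u$ then forces $u$ to have constant sign, completing the proof.
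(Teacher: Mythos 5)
Your route is a direct Nehari--manifold minimization combined with Lions concentration, whereas the paper proves Theorem~\ref{thm1} via a relative--genus argument (Section~4) resting on a quantitative deformation lemma (Lemma~\ref{deformation}), which in turn rests on the translation-invariant Cerami compactness condition of Theorem~\ref{Cerami}. The paper shows $c_1=c_g=c_{\mathcal N}=c_{mm}$ and that $c_1$ is a critical value, and then \emph{separately} proves that Nehari minimizers are critical points. Your route is in principle cleaner for the ground state, but it trades the deformation machinery for a harder direct analysis of the minimizing sequence, and that is where a genuine gap appears.

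The gap is the passage from an $H^1$-bound to an $X$-bound. Your claim at the end of the second paragraph that ``$(u_n)$ is bounded in $X$'' is false: $\|\cdot\|_X=\|\cdot\|_{H^1}+\|\cdot\|_{L^p(d\mu)}$ is not $\mathbb Z^2$-translation invariant, so $u_n=u(\cdot-ne_1)$ with $u\in\mathcal N$ fixed is a constant-energy sequence in $\mathcal N$ with $\|u_n\|_{L^p(d\mu)}\to\infty$. Only the $H^1$-bound follows from $I-\tfrac1{2p}\langle I'(\cdot),\cdot\rangle$. The $X$-bound can be recovered only for the translated sequence $v_n=u_n(\cdot+z_n)$, and even there it is not automatic: boundedness of $V_1(v_n)$ does not by itself bound $\int\log(1+|x|)|v_n|^p\,dx$. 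One genuinely needs the nontriviality of the a.e.\ limit via Egorov's theorem to extract the reverse inequality $V_1(v_n)\gtrsim \|v_n\|_{*}^p-C\|v_n\|_{L^p}^p$; this is exactly the content of Lemma~\ref{lemma4}, which your proposal neither states nor replaces. ``Fatou applied to the $d\mu$-integral'' does not help: it places $u$ in $X$ only if you already know $\liminf\|v_n\|_*<\infty$, which is precisely the bound you lack. And without the $X$-bound on $(v_n)$, the compact embedding $X\hookrightarrow\hookrightarrow L^s$ is unavailable, so the convergence of $V_2(v_n)$ and $\|v_n\|_{L^q}^q$ that your w.l.s.c.\ step quietly relies on does not go through.

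There is also a logical snag in the last paragraph. For a plain minimizing sequence on $\mathcal N$, $I'(v_n)$ does not converge to zero, so you cannot deduce $\langle I'(u),\varphi\rangle=0$ from weak convergence and Brezis--Lieb splitting alone; that inference would need an Ekeland-type Cerami sequence, which you never produce. The correct order is the one you mention second: once the $X$-bound is in place, show first that $u\in\mathcal N$ (ruling out $\langle I'(u),u\rangle<0$ by the strict monotonicity in $t$ of $I(tw)-\tfrac1{2p}\langle I'(tw),tw\rangle=\tfrac{p-1}{2p}t^2\|w\|_{H^1}^2+b(\tfrac1{2p}-\tfrac1q)t^q\|w\|_{L^q}^q$, using $q\geq 2p$ and $p\geq 2$), then $I(u)=c$ by w.l.s.c., and only then use the transversality $\psi_u'(1)<0$ you correctly identify to pass from constrained to unconstrained criticality. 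A small further imprecision: for some $u$ the fiber $t\mapsto I(tu)$ is strictly increasing with $\sup_{t>0}I(tu)=+\infty$ (case~(2) of Lemma~\ref{lemma3}), so ``a bijection from directions onto $\mathcal N$'' does not exist; the minimax identity $c_{\mathcal N}=c_{mm}$ still holds because such $u$ contribute $+\infty$ to the infimum, but this needs to be said.
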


Next, assuming that $a(x)=a>0$ is a constant, $q\geq 2p-2$ and $q>2$, we investigate the existence of mountain pass and ground state solutions to the Schr\"{o}dinger-Newton equations \eqref{S-N}. To this end, we define the mountain pass value
\begin{equation}\label{mp}
  c_{m p}=\inf _{\gamma \in \Gamma} \max _{t \in[0,1]} I(\gamma(t)), \qquad \Gamma=\left\{\gamma \in C([0,1] ; X) \,|\, \gamma(0)=0, I(\gamma(1))<0\right\}.
\end{equation}
Inspired by ideas from \cite{R} (see also \cite{DW}), we define the auxiliary functionals $J_{k}:\,X\rightarrow\mathbb{R}$ ($k=1,2$) by
\begin{equation}\label{af}
\begin{aligned}
& J_{k}(u):=\int_{\mathbb{R}^{2}}\left(k|\nabla u|^{2}+(k-1)au^{2}-\frac{(kq-2)b}{q}|u|^{q}\right)dx-\frac{\gamma}{4\pi p}\left(\int_{\mathbb{R}^{2}}|u|^{p}dx\right)^{2} \\
&\qquad\quad +\frac{\gamma(kp-2)}{2p\pi}\int_{\mathbb{R}^{2}}\int_{\mathbb{R}^{2}}\log(|x-y|)|u|^{p}(x)|u|^{p}(u)dxdy.
\end{aligned}\end{equation}

Our second main result is the following theorem.
\begin{thm}\label{thm3}
Let $a(x)=a>0$ be a constant, $p\geq2$, $q\geq2p-2$ and $q>2$. For $2\leq p<3$, assume further $q\geq 2p-1$ \emph{or} $q<p+1$.

\smallskip

\noindent(i) We have $c_{mp}>0$, and equation \eqref{S-N} has a pair of solutions $\pm u\in X\setminus\{0\}$ such that $I(u)=c_{mp}$.

\smallskip

\noindent(ii) Equation \eqref{S-N} admits a pair of ground state solutions $\pm u \in X\setminus\{0\}$ such that $I(u)=c_g$.

\smallskip

\noindent(iii) Assume further that $q\geq2p-1$ if $2\leq p<3$, then $c_{g}=c_{mp}$. Moreover, let $k=1$ \emph{if and only if} $p\geq3$ and $k=2$ \emph{if and only if} $2\leq p<3$, then the restriction of $I$ to the set $\mathcal{M}_{k}:=\left \{u\in X\setminus \{0\}\mid J_{k}(u)=0\right\}$ admits a global minimum and each minimizer $u\in\mathcal{M}_{k}$ of $I\big|_{\mathcal{M}_{k}}$ is also a (ground state) solution to \eqref{S-N} which doesn't change sign and obeys the mini-max characterization
\begin{equation*}
  I(u)=\inf_{u \in X\setminus\{0\}}\sup_{t>0}I(u_{t,k}),
\end{equation*}
where $u_{t,k}\in X$ is defined by $u_{t,k}(x):=t^{k}u(tx)$ for any $u\in X\setminus\{0\}$ and $t>0$.
\end{thm}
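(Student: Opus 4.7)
The plan is to run a mountain pass scheme for $I$ on the Banach space $X$, extract a bounded Cerami sequence with the help of the auxiliary functional $J_k$, pass to the limit via the 2-D strong compactness phenomenon developed in \cite{CW,DW}, and finally identify the minimax level with the minimum of $I$ on $\mathcal{M}_k$ through the fibering $u\mapsto u_{t,k}$. First I verify the mountain pass geometry on $X$: splitting $\log|x-y| = \log(1+|x-y|) - \log(1+1/|x-y|)$ and combining the elementary bound $\log(1+|x-y|) \leq \log(1+|x|) + \log(1+|y|)$ with a Hardy--Littlewood--Sobolev estimate on the negative piece, the quadratic part of $I$ dominates near the origin, so there exist $\rho,\sigma>0$ with $I(u)\geq\sigma$ whenever $\|u\|_X=\rho$. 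For the downward endpoint, the scaling identity
\begin{equation*}
I(\varphi_{t,k}) = \tfrac{t^{2k}}{2}\|\nabla\varphi\|_2^2 + \tfrac{a t^{2k-2}}{2}\|\varphi\|_2^2 + \tfrac{\gamma t^{2kp-4}}{4\pi p}\bigl(L(\varphi) - \log t\cdot\|\varphi\|_p^{2p}\bigr) - \tfrac{b t^{kq-2}}{q}\|\varphi\|_q^q,
\end{equation*}
with $L(\varphi):=\iint\log|x-y|\,|\varphi(x)|^p|\varphi(y)|^p\,dx\,dy$, shows that for any $\varphi\in C_c^\infty(\mathbb{R}^2)\setminus\{0\}$, either the $-\log t\cdot t^{2kp-4}$ contribution (active when $q<p+1$, where the log term dominates) or the $-bt^{kq-2}$ contribution (active when $q\geq 2p-1$) drives $I(\varphi_{t,k})\to-\infty$ as $t\to\infty$.

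Next I produce a bounded Cerami sequence $(u_n)\subset X$ at level $c_{mp}$. The combination $I(u_n) - \tfrac{1}{2p}\langle I'(u_n),u_n\rangle$ cancels the log-convolution term outright and leaves $\tfrac{p-1}{2p}\|u_n\|_{H^1}^2 + \bigl(\tfrac{1}{2p}-\tfrac{1}{q}\bigr)b\|u_n\|_q^q$, which bounds $\|u_n\|_{H^1}$ when $q\geq 2p$. For the harder range $2p-2\leq q<2p$, I adapt Jeanjean's monotonicity trick along the fibering $u\mapsto u_{t,k}$ to extract a Cerami sequence along which also $J_k(u_n)\to 0$. Appropriate linear combinations of $I(u_n)$, $\langle I'(u_n),u_n\rangle$ and $J_k(u_n)$ then bound $\|\nabla u_n\|_2$, $\|u_n\|_2$, and supply a lower bound on $L(u_n)$, which in turn controls the weighted $L^p(d\mu)$-part of $\|u_n\|_X$. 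The choice $k=1$ when $p\geq 3$ and $k=2$ when $2\leq p<3$ is dictated by requiring the coefficient $\tfrac{\gamma(kp-2)}{2p\pi}$ of $L$ in $J_k$ to dominate the term $\tfrac{\gamma}{4p\pi}\|u\|_p^{2p}$ in the coercivity estimate, which is exactly the algebraic threshold at $p=3$.

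Compactness proceeds along the lines of \cite{CW,DW}: since $a$ is a constant, $I$ is translation invariant on $\mathbb{R}^2$, so after translating each $u_n$ I invoke Lions' non-vanishing lemma (vanishing would force $\|u_n\|_q^q\to 0$ and local decay of the log-convolution, hence $I(u_n)\to 0$, contradicting $c_{mp}>0$). The weak limit $u\in X\setminus\{0\}$ satisfies $I'(u)=0$ by local Sobolev compactness together with a Fatou-type estimate for the nonlocal term, and a Brezis--Lieb-type decomposition for the log-convolution taken from \cite{CW,DW} promotes this to $I(u)=c_{mp}$, proving (i). Part (ii) follows by applying the same compactness machinery to a minimizing Cerami sequence at level $c_g\leq c_{mp}$.

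For (iii), I analyze the fibering $\psi_u(t):=I(u_{t,k})$, $t>0$, for each $u\in X\setminus\{0\}$. The stronger assumption $q\geq 2p-1$ when $2\leq p<3$ is precisely what guarantees that $\psi_u$ attains a unique strict maximum at some $t_u>0$ with $u_{t_u,k}\in\mathcal{M}_k$, yielding $\inf_{\mathcal{M}_k}I\leq\sup_{t>0}I(u_{t,k})$. A mountain pass path built along a fibering curve then gives $\inf_{\mathcal{M}_k}I\geq c_{mp}$. Conversely, $J_k(u)=\tfrac{d}{dt}I(u_{t,k})\big|_{t=1}=\langle I'(u),\partial_t u_{t,k}|_{t=1}\rangle$ vanishes at every critical point, so every ground state lies in $\mathcal{M}_k$ and thus $c_g\geq\inf_{\mathcal{M}_k}I$; combined with $c_g\leq c_{mp}$ this yields $c_g=c_{mp}=\inf_{\mathcal{M}_k}I$, with the infimum attained via a Cerami sequence on $\mathcal{M}_k$ produced by Ekeland's principle. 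The non-sign-changing property follows because $I(|u|)=I(u)$ and $|u|\in\mathcal{M}_k$, so $|u|$ is also a minimizer and solves \eqref{S-N}, after which the strong maximum principle forces a definite sign. The main obstacle throughout is controlling the log-weighted $L^p(d\mu)$-part of $\|u_n\|_X$ along Cerami sequences, since the nonlocal term is not bounded from below by $H^1$-quantities alone; this is precisely where Jeanjean's trick must be combined with the $J_k$-scaling, and the case split at $p=3$ reflects exactly the threshold where the relevant coercive combinations change sign, forcing the switch from $k=1$ to $k=2$.
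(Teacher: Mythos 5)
Your overall architecture matches the paper's: produce a Cerami sequence with the extra information $J_k(u_n)\to 0$ via a Jeanjean-type scaling on the augmented space $\mathbb{R}\times X$, prove $H^1$-boundedness, pass to the limit using translation-invariance of $I$ and Lions' non-vanishing, and then identify $c_{mp}=c_g=\inf_{\mathcal M_k}I$ through the one-parameter fibering $t\mapsto u_{t,k}$. However, there are two genuine gaps and one incorrect explanation.

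First, and most seriously, the boundedness step has a hole in exactly one of the regimes the theorem covers. You claim that once $J_k(u_n)\to 0$ is available, ``appropriate linear combinations of $I(u_n)$, $\langle I'(u_n),u_n\rangle$ and $J_k(u_n)$'' bound $\|\nabla u_n\|_2$ and $\|u_n\|_2$. After eliminating the $V_0$ term, the natural combination is $I(u_n)-\tfrac{1}{2(kp-2)}J_k(u_n)$, whose coefficient in front of $b\|u_n\|_q^q$ is a positive multiple of $q-2p+1$ for $k=2$. Thus this coefficient is \emph{negative} whenever $q<2p-1$, so the combination gives no bound there. This is precisely the subcase $q<p+1$ permitted by the theorem's hypotheses when $2\leq p<3$ (the window $p+1\leq q<2p-1$ is excluded, but $2p-2\leq q<p+1$ is allowed). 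The paper handles this range by a contradiction argument: rescale $v_n:=t_n^2 u_n(t_n\cdot)$ with $t_n=\|\nabla u_n\|_2^{-1/2}$, multiply the coercivity inequality by $t_n^4$, and play the exponents $t_n^{4(2-p)}$, $t_n^{6-2q}$, $t_n^2$ against each other via Gagliardo--Nirenberg; this is a separate mechanism that no linear combination reproduces. Without something of this kind your boundedness proof does not close in that regime.

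Second, the step ``$J_k(u)=\langle I'(u),\partial_t u_{t,k}|_{t=1}\rangle$ vanishes at every critical point'' is not a free consequence of the chain rule: $\partial_t u_{t,k}|_{t=1}=ku+x\cdot\nabla u$ does not lie in $X$ for general $u\in X$, so $I'(u)$ cannot be paired against it directly. The paper establishes $J_k(u)=0$ for critical points by proving a Pohozaev identity (multiplying \eqref{S-N} by $x\cdot\nabla u$, integrating by parts on balls, and controlling the boundary terms using the exponential decay of $u$ and the logarithmic growth of $w$); this regularity input from Lemma \ref{lemma1} is essential and your proposal needs to invoke it. Finally, your stated reason for the split at $p=3$ (``the coefficient of $L$ in $J_k$ dominating $\|u\|_p^{2p}$'') is not the operative one: the $V_0=L$ term is canceled in the combination. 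What actually forces the switch is the sign of the residual $\|\nabla u\|_2^2$ coefficient, $\tfrac{p-3}{2(p-2)}$ for $k=1$ versus $\tfrac{p-2}{2(p-1)}$ for $k=2$, together with the sign of the $\|u\|_q^q$ coefficient for $q$ near $2p-2$.
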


\begin{rem}\label{rem0}
Being different from \cite{R} and \cite{DW}, besides using the auxiliary functional $J_2$ in the cases $2\leq p<3$, we also define the auxiliary functional $J_1$ and use it to deal with the cases $p\geq3$ and $q\geq2p-2$. Please see the crucial lemma Lemma \ref{lemma13}.
\end{rem}

\begin{rem}\label{rem4}
When $2<p<3$, the cases $p+1\leq q<2p-1$ are not included in our Theorem \ref{thm3}. For $2\leq p<3$, we do not know whether $c_{g}=c_{mp}$ and there exists a ground state solution that may change sign or not in the cases $2p-2\leq q<2p-1$.
\end{rem}

By Lemma \ref{lemma1} in Section 2, we can see that every solution $u\in X$ to Schr\"{o}dinger-Newton equations \eqref{S-N} also gives a solution $(u,w)$ with $w:=\frac{1}{2\pi}\int_{\mathbb{R}^{2}}\log\left(|x-y|\right)|u|^{p}(y)dy$ to Schr\"{o}dinger-Poisson systems \eqref{S-P}. As a consequence of Theorem \ref{thm1}, Theorem \ref{thm3} and Lemma \ref{lemma1}, we derive the existence of ground state solutions for \eqref{S-P}.
\begin{cor}\label{cor1}
Let $p\geq2$, $q\geq 2p-2$ and $q>2$. Assume further that $a(x)=a>0$ is a constant if $q<2p$, and $q\geq 2p-1$ \emph{or} $2p-2\leq q<p+1$ if $2\leq p<3$. Then \eqref{S-P} admits a pair of ground state solutions $(\pm u,w)$ with $\pm u \in X $ satisfying $I(u)=c_g$. Assume further that $q\geq3$ if $2\leq p<\frac{5}{2}$, then the ground state solution $(u,w)$ doesn't change sign. Moreover, $u\in L^{\infty}(\mathbb{R}^{2})$ decays faster than exponential functions and $w\rightarrow+\infty$ as $|x|\rightarrow\infty$, if $a$ is H\"{o}lder continuous, then $(u,w)$ is a classical solution to \eqref{S-P}. The rest of conclusions in Theorem \ref{thm1} and Theorem \ref{thm3} also hold for Schr\"{o}dinger-Poisson systems \eqref{S-P}.
\end{cor}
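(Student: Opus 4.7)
The strategy is to transport everything from Theorems \ref{thm1} and \ref{thm3} to \eqref{S-P} via the correspondence asserted in Lemma \ref{lemma1}: any $u\in X$ solving \eqref{S-N} yields a pair $(u,w)$ solving \eqref{S-P} by defining $w(x):=\frac{1}{2\pi}\int_{\mathbb{R}^{2}}\log(|x-y|)|u|^{p}(y)\,dy$. The first step is therefore formal: when $q\geq 2p$ apply Theorem \ref{thm1}, and when $q\geq 2p-2$, $q>2$ and $a(x)\equiv a$ apply Theorem \ref{thm3}, to obtain $\pm u\in X\setminus\{0\}$ with $I(u)=c_{g}$, and carry over the Nehari/mini-max characterizations verbatim. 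The genuinely new content is the sign of $u$, the $L^{\infty}$ bound and decay of $u$, the behavior of $w$ at infinity, and classical regularity.

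For the sign, observe that $I(|u|)=I(u)$, because each nonlinear term of $I$ depends on $u$ only through $|u|^{p}$ or $|u|^{q}$, and $|u|\in X$. I would then argue that $|u|$ still belongs to the relevant constraint set ($\mathcal{N}$ in Theorem \ref{thm1}, or $\mathcal{M}_{k}$ in Theorem \ref{thm3}) so that it is itself a minimizer and therefore a nonnegative critical point of $I$, i.e.\ a nonnegative strong solution of \eqref{S-N}. The extra assumption $q\geq 3$ when $2\leq p<\frac{5}{2}$ is what ensures $|u|^{q-2}\in L^{r}_{\mathrm{loc}}$ for large enough $r$ so that a Brezis--Kato-type bootstrap followed by the Harnack inequality/strong maximum principle forces $|u|>0$ everywhere; hence $u$ itself does not change sign.

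The $L^{\infty}$ regularity is immediate from Lemma \ref{lemma1}: $u\in W^{2,r}(\mathbb{R}^{2})$ for every $r\geq 1$ embeds into $L^{\infty}\cap C^{0,\alpha}$. For the behavior of $w$, I would split
\begin{equation*}
w(x)=\frac{\|u\|_{L^{p}}^{p}}{2\pi}\log|x|+\frac{1}{2\pi}\int_{\mathbb{R}^{2}}\log\frac{|x-y|}{|x|}|u|^{p}(y)\,dy,
\end{equation*}
and show via dominated convergence (using $|u|^{p}\in L^{1}(d\mu)$) that the second term vanishes as $|x|\to\infty$, giving $w(x)\to+\infty$. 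With $w$ blowing up, rewrite \eqref{S-N} as $-\Delta u+V(x)u=0$ with effective potential $V(x)=a(x)+\gamma w(x)|u|^{p-2}(x)-b|u|^{q-2}(x)$; since $u(x)\to 0$ and $w(x)\to+\infty$, $V(x)\to+\infty$, and a standard comparison argument with a radial supersolution $\exp(-\lambda|x|\log|x|)$ (or any prescribed super-exponential profile) yields decay of $u$ faster than any exponential once $|x|$ is large enough to dominate $b|u|^{q-2}$. Classical regularity when $a\in C^{0,\alpha}$ follows from Schauder estimates applied to the right-hand side of \eqref{S-N}, which is then locally H\"older continuous, giving $u\in C^{2}$ and hence a classical solution $(u,w)$ of \eqref{S-P}.

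The main obstacle I anticipate is the sign-definiteness step in the regime $2\leq p<\frac{5}{2}$, $q\geq 3$, where one must carefully verify that passing from $u$ to $|u|$ preserves membership in the relevant constraint set (and does not merely preserve the energy value), given that the logarithmic kernel is sign-changing and the constraint $J_{k}(u)=0$ involves delicate cancellation. The remaining pieces---boundedness, growth of $w$, super-exponential decay, and $C^{2}$ regularity---are essentially routine bootstrap once the variational existence is in hand.
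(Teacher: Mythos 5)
Your overall route --- transferring from \eqref{S-N} to \eqref{S-P} via Lemma \ref{lemma1} and invoking Theorems \ref{thm1} and \ref{thm3} for the variational existence --- is the intended one, and the treatment of the $L^{\infty}$ bound, the behavior $w\to+\infty$, and Schauder regularity is correct. But your explanation of the hypothesis $q\geq3$ when $2\leq p<\tfrac{5}{2}$ is wrong. Lemma \ref{lemma1} already places any critical point $u$ of $I$ in $W^{2,r}(\mathbb{R}^{2})$ for every $r\geq1$, hence in $L^{\infty}(\mathbb{R}^2)$, so $|u|^{q-2}$ is in $L^{\infty}_{\mathrm{loc}}$ for any $q\geq 2$ and no Brezis--Kato bootstrap is needed before Harnack. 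What actually produces the sign conclusion is the constrained-minimizer characterization of the ground state: only once one knows $u$ minimizes $I$ on $\mathcal{N}$ (Theorem \ref{thm1}) or on $\mathcal{M}_{k}$ (Theorem \ref{thm3}(iii), via Lemma \ref{lemma15}(iii)) can one argue that $|u|$ belongs to the same constraint set with $I(|u|)=I(u)$, is therefore a minimizer and hence a critical point, and then Harnack gives $|u|>0$. It is Theorem \ref{thm3}(iii), and Lemma \ref{lemma15}(iii) behind it, that impose $q\geq 2p-1$ when $2\leq p<3$ (and Remark \ref{rem4} explicitly leaves the sign question open when $q<2p-1$). The Corollary's threshold $q\geq3$ coincides with $q\geq 2p-1$ at $p=2$ but is strictly weaker for $2<p<3$; it is the $q\geq 2p-1$ constraint from the constrained-minimizer step that you should be tracing, not a regularity obstruction that does not exist.

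Your decay argument also contains a false step. The claim $V(x)\to+\infty$ holds only for $p=2$. When $p>2$, in $V=a+\gamma w|u|^{p-2}-b|u|^{q-2}$ the factor $|u|^{p-2}$ decays exponentially while $w$ grows only logarithmically, so $\gamma w|u|^{p-2}\to 0$ and $V\to a>0$; the super-exponential barrier $\exp(-\lambda|x|\log|x|)$ is then not a valid comparison function, and one obtains only the exponential bound $|u(x)|\le Ce^{-A|x|}$ for some fixed $A>0$. That is precisely what Lemma \ref{lemma1}(2) asserts (despite the phrase ``decays faster than exponential functions'', the explicit estimate there is $|u|\leq Ce^{-A|x|}$, derived by citing Agmon's theorem), so what the Corollary needs is true, but the mechanism you propose over-reaches and would fail for $p>2$.
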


Since we have already derived the existence of positive solutions to \eqref{S-N} and \eqref{S-P}, it is natural for us to classify positive solutions via their geometric properties. First, by using the method of moving planes (see \cite{CD,CL,CLO,CW,DFHQW,DFQ,DQ,GNN,MZ}), we will study the symmetry property of classical solutions $(u,w)$ to the following generalized Schr\"{o}dinger-Poisson systems
\begin{equation}\label{g-S-P}
  \left\{\begin{array}{ll}{-\Delta u-\gamma w|u|^{p-2}u=f(u)} & {\text { in } \mathbb{R}^{2},} \\ {-\Delta w=|u|^{p}} & {\text { in } \mathbb{R}^{2},}\end{array}\right.
\end{equation}
subject to the conditions
\begin{equation}\label{conditions}
  0<u\in L^{\infty}(\mathbb{R}^{2}) \quad \text{and} \quad w(x)\rightarrow -\infty, \quad \text{as} \,\, |x|\rightarrow\infty.
\end{equation}

We have the following result on symmetry of classical solutions for \eqref{g-S-P} and \eqref{conditions}.
\begin{thm}\label{thm2}
Assume $p\geq2$ and $f: \, \mathbb{R}\rightarrow \mathbb{R}$ is locally Lipschitz with $f(0)=0$. If $p>2$, we assume further that there exist some $\epsilon_{0}>0$ and $a_{0}>0$ small such that
\begin{equation}\label{monotonicity}
  \sup_{0<x\neq y<\epsilon_{0}}\frac{f(x)-f(y)}{x-y}\leq-a_{0}<0.
\end{equation}
Then every classical solution $(u,w)$ of \eqref{g-S-P}, \eqref{conditions} is radially symmetric and strictly decreasing with respect to some $x_{0}\in \mathbb{R}^{2}$.
\end{thm}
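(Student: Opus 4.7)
My strategy is to adapt the method of moving planes to the coupled system \eqref{g-S-P}. Fix a direction, say $e_1$, and for $\lambda\in\mathbb{R}$ set $T_\lambda:=\{x_1=\lambda\}$, $\Sigma_\lambda:=\{x_1<\lambda\}$, $x^\lambda:=(2\lambda-x_1,x_2)$, and the comparison functions $U_\lambda(x):=u(x^\lambda)-u(x)$, $W_\lambda(x):=w(x^\lambda)-w(x)$. A mean-value expansion of $s\mapsto|s|^{p-2}s$ and of the locally Lipschitz $f$ shows that on $\Sigma_\lambda$ the pair $(U_\lambda,W_\lambda)$ solves the cooperative linear system
\begin{equation*}
-\Delta U_\lambda=c_\lambda(x)\,U_\lambda+\gamma\,u^{p-1}(x)\,W_\lambda,\qquad -\Delta W_\lambda=d_\lambda(x)\,U_\lambda,
\end{equation*}
with $d_\lambda(x)=p\,\eta_\lambda^{p-1}\ge 0$, $c_\lambda(x)=\gamma(p-1)\,\xi_\lambda^{p-2}\,w_\lambda(x)+\tilde f_\lambda(x)$, and the Dirichlet condition $U_\lambda=W_\lambda=0$ on $T_\lambda$. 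The goal is to produce a critical value $\bar\lambda\in\mathbb{R}$ such that $U_\lambda,W_\lambda\ge 0$ on $\Sigma_\lambda$ for every $\lambda\le\bar\lambda$ and $U_{\bar\lambda}\equiv W_{\bar\lambda}\equiv 0$; repeating this in every direction $\nu\in\mathbb{S}^1$ yields radial symmetry of $(u,w)$ about a common centre $x_0\in\mathbb{R}^2$, and the Hopf boundary-point lemma applied on $T_{\bar\lambda}$ gives the strict monotonicity in $|x-x_0|$.

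The preparatory step is to extract the asymptotic behaviour of both functions. From $u\in L^\infty$, $w\to-\infty$ and the first equation in \eqref{g-S-P} I first deduce $u(x)\to 0$ as $|x|\to\infty$: along any sequence $|x_n|\to\infty$ with $u(x_n)\ge\delta>0$, the factor $\gamma w(x_n)u(x_n)^{p-1}$ diverges to $-\infty$, and interior $W^{2,r}$-estimates combined with the uniform bound on $u$ prevent this from being compatible with a classical solution. Once $u\to 0$ and (after a bootstrap) $u^p\in L^1(\mathbb{R}^2)$, the 2-D Newton potential representation of $-\Delta w=|u|^p$ yields $w(x)=-c_0\log|x|+O(1)$ at infinity with $c_0=\|u\|_p^p/(2\pi)>0$. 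From the identity $|x^\lambda|^2-|x|^2=4\lambda(\lambda-x_1)$, valid for $x_1<\lambda<0$, one then concludes
\begin{equation*}
\liminf_{|x|\to\infty,\ x\in\Sigma_\lambda}W_\lambda(x)\ge 0\qquad\text{for every }\lambda<0.
\end{equation*}

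For the initial step of the moving plane I would run a coupled energy estimate. Testing the first equation against $-U_\lambda^-$ and the second against $-W_\lambda^-$, using a smooth cutoff in $|x|$ to handle the unboundedness of $\Sigma_\lambda$, and absorbing the cross-term via Young's inequality and the bound $u^{p-1}\le\|u\|_\infty^{p-1}$, I obtain a coupled $H^1$-estimate for $(U_\lambda^-,W_\lambda^-)$. Because $w\to-\infty$ forces $w_\lambda\le 0$ off a fixed compact set, the first piece of $c_\lambda$ is $\le 0$ there; and for $p>2$ the local monotonicity \eqref{monotonicity} bounds $\tilde f_\lambda\le -a_0$ on $\{u,u_\lambda<\epsilon_0\}$, which exhausts the relevant part of $\Sigma_\lambda$ as $\lambda\to-\infty$ (the case $p=2$ needs only the Lipschitz bound on $f$). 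Combined with the asymptotic sign of $W_\lambda$ and the decay $U_\lambda^-\to 0$ at infinity, this forces $U_\lambda^-\equiv W_\lambda^-\equiv 0$ for $\lambda$ sufficiently negative. The sliding step is then classical: set $\bar\lambda:=\sup\{\lambda\mid U_\mu,W_\mu\ge 0\text{ in }\Sigma_\mu\text{ for every }\mu\le\lambda\}$; if $U_{\bar\lambda}\not\equiv 0$, the strong maximum principle for the cooperative pair $(U_{\bar\lambda},W_{\bar\lambda})$ gives $U_{\bar\lambda}>0$ on $\Sigma_{\bar\lambda}$, which lets one push the inequality past $\bar\lambda$ and contradicts maximality; and if $U_{\bar\lambda}\equiv 0$ then $W_{\bar\lambda}\equiv 0$ by the second equation, giving the symmetry across $T_{\bar\lambda}$.

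The principal obstacle is the initial step. Unlike the three-dimensional case, in 2-D the potential $w$ does not decay at infinity but tends to $-\infty$, so $c_\lambda$ is unbounded below on $\Sigma_\lambda$ and the two equations for $(U_\lambda,W_\lambda)$ cannot be decoupled. The remedy is a three-way combination: the sharp logarithmic asymptotics of $w$ to pin the sign of $W_\lambda$ at infinity; the pointwise inequality $w_\lambda\le 0$ off a compact set, exploiting the first piece of $c_\lambda$; and the strict monotonicity hypothesis \eqref{monotonicity} (indispensable precisely in the range $p>2$, where the first piece of $c_\lambda$ carries the vanishing factor $\xi^{p-2}$), all closed through the $L^\infty$-bound on $u$ in the coupled energy identity.
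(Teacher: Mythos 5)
Your overall moving--plane strategy matches the paper's, and several of your preliminaries are on target: the exponential decay of $u$, the logarithmic asymptotics $w(x)=\tfrac{1}{2\pi}\log|x|\,\|u\|_{p}^{p}+O(1)$ obtained from the Newtonian potential, and the role of \eqref{monotonicity} precisely because the factor $\xi_\lambda^{p-2}$ in the zeroth--order coefficient degenerates when $p>2$. The gap is in the mechanism you propose for the quantitative step. You want to close the initial (and the sliding) step by testing the $u$--equation against $U_\lambda^-$ and the $w$--equation against $W_\lambda^-$ and absorbing the cross term by Young's inequality. But testing $-\Delta W_\lambda=d_\lambda U_\lambda$ against $W_\lambda^-$ yields only
\begin{equation*}
\|\nabla W_\lambda^-\|_{L^2(\Sigma_\lambda)}^2\leq p\,\|u\|_{L^\infty}^{p-1}\,\|U_\lambda^-\|_{L^2(\Sigma_\lambda)}\,\|W_\lambda^-\|_{L^2(\Sigma_\lambda)},
\end{equation*}
a gradient bound, whereas absorbing the term $\gamma\int u^{p-1}W_\lambda U_\lambda^-$ in the $u$--equation requires $\|W_\lambda^-\|_{L^2}$. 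In a two--dimensional half--plane there is no Poincar\'e inequality converting $\|\nabla W_\lambda^-\|_{L^2}$ into $\|W_\lambda^-\|_{L^2}$, and the pair of inequalities
\begin{equation*}
a_0\,\|U_\lambda^-\|_{L^2}\leq \gamma\,\|u\|_{L^\infty}^{p-1}\,\|W_\lambda^-\|_{L^2},\qquad
\|\nabla W_\lambda^-\|_{L^2}^2\leq p\,\|u\|_{L^\infty}^{p-1}\,\|U_\lambda^-\|_{L^2}\,\|W_\lambda^-\|_{L^2}
\end{equation*}
is perfectly compatible with $(U_\lambda^-,W_\lambda^-)\not\equiv(0,0)$. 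So your coupled energy estimate does not close.

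The paper closes exactly this gap by not testing the second equation at all. It starts from the Liouville representation $w(x)=-\tfrac{1}{2\pi}\int_{\mathbb{R}^2}\log(|x-y|)\,u^{p}(y)\,dy+c$, deduces the antisymmetrized formula
\begin{equation*}
w_\lambda(x)=\frac{1}{2\pi}\int_{\Sigma_\lambda}\log\frac{|x-y^\lambda|}{|x-y|}\,\bigl((u^\lambda)^p(y)-u^p(y)\bigr)\,dy,
\end{equation*}
bounds the kernel pointwise by $2(y_1-\lambda)/|x-y|$ on $\Sigma_\lambda\times\Sigma_\lambda$, and applies the Hardy--Littlewood--Sobolev inequality to get a direct $L^2$--bound
\begin{equation*}
\|w_\lambda^-\|_{L^2(\Sigma_\lambda)}\leq C\left(\int_{\Sigma^-_\lambda}(y_1-\lambda)^2\,u^{2(p-1)}(y)\,dy\right)^{1/2}\|u_\lambda^-\|_{L^2(\Sigma_\lambda)},
\end{equation*}
whose prefactor goes to zero as the plane moves to infinity (and, for the sliding step, when $\Sigma^-_\lambda$ is replaced by its part outside a large ball). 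This quantitative $L^2$ control of $w_\lambda^-$ by $u_\lambda^-$ is the technical heart of the planar argument; it cannot be extracted from the PDE $-\Delta w_\lambda=p\,\xi_\lambda^{p-1}u_\lambda$ by energy estimates, and without it the scheme you describe stops at the first step.
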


\begin{rem}\label{rem1}
If the locally Lipschitz continuous function $f(u)=b|u|^{q-2}u-au$ with $q\geq 2$ and constant $a>0$, then we can derive from Theorem \ref{thm2} the symmetry of classical solutions $(u,w)$ for Schr\"{o}dinger-Poisson systems \eqref{S-P} subject to the conditions \eqref{conditions}.
\end{rem}

As a consequence of Theorem \ref{thm2}, we derive the symmetry of positive solutions for Schr\"{o}dinger-Newton equations \eqref{S-N}.
\begin{cor}\label{cor2}
Assume $p\geq2$, $q\geq 2$ and $a$ is a positive constant in \eqref{S-N}. Then every positive solution $u\in X$ of \eqref{S-N} is radially symmetric and strictly decreasing with respect to some $x_{0}\in \mathbb{R}^{2}$.
\end{cor}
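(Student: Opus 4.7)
The plan is to reduce Corollary \ref{cor2} directly to Theorem \ref{thm2}. Given a positive solution $u\in X$ of \eqref{S-N}, the strategy is to construct the matching potential $w$ so that $(u,w)$ becomes a classical solution of the generalized Schr\"odinger--Poisson system \eqref{g-S-P} satisfying the asymptotic condition \eqref{conditions}. Specifically, define the logarithmic Newton potential of $u^{p}$ by
\begin{equation*}
  w(x):=-\frac{1}{2\pi}\int_{\mathbb{R}^{2}}\log|x-y|\,u^{p}(y)\,dy,
\end{equation*}
so that, using $\Delta\log|x|=2\pi\delta_{0}$ in $\mathbb{R}^{2}$, one obtains $-\Delta w=u^{p}$. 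Substituting $w$ back into \eqref{S-N} and setting $f(u):=b|u|^{q-2}u-au$ turns \eqref{S-N} into $-\Delta u-\gamma w u^{p-1}=f(u)$. By Lemma~\ref{lemma1}, $u$ is classical $C^{2}$ and belongs to $L^{\infty}(\mathbb{R}^{2})$ (the constant $a$ is trivially H\"older), so $(u,w)$ is a genuine classical solution of \eqref{g-S-P}.

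The next step is to verify \eqref{conditions}. Positivity and boundedness of $u$ are given by hypothesis and Lemma~\ref{lemma1}. For the behavior of $w$, I would split the defining integral into $|y|\le |x|/2$ and $|y|>|x|/2$: on the inner region, $\log|x-y|=\log|x|+O(1)$ uniformly in $y$; on the outer region, the integral is bounded by $\int_{|y|>|x|/2}\log(1+|y|)\,u^{p}(y)\,dy+C\log|x|\int_{|y|>|x|/2}u^{p}(y)\,dy$, which is $o(\log|x|)$ since $u\in X$ gives $\int\log(1+|y|)u^{p}\,dy<\infty$ and $\int u^{p}<\infty$. Therefore
\begin{equation*}
  w(x)=-\frac{\log|x|}{2\pi}\|u\|_{p}^{p}+o(\log|x|)\quad\text{as }|x|\to\infty,
\end{equation*}
and since $u\not\equiv 0$ implies $\|u\|_{p}>0$, we obtain $w(x)\to-\infty$, as required.

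It remains to check the assumptions of Theorem~\ref{thm2} on $f(u)=b|u|^{q-2}u-au$. Clearly $f(0)=0$, and for $q\ge2$ the map $u\mapsto|u|^{q-2}u$ is $C^{1}$ on $\mathbb{R}$, so $f$ is locally Lipschitz. If $p>2$, I need \eqref{monotonicity}: when $q>2$, the mean value theorem gives, for $0<x,y<\epsilon_{0}$,
\begin{equation*}
  \frac{f(x)-f(y)}{x-y}=b(q-1)\xi^{q-2}-a\le b(q-1)\epsilon_{0}^{q-2}-a\le -\tfrac{a}{2},
\end{equation*}
after choosing $\epsilon_{0}$ small enough that $b(q-1)\epsilon_{0}^{q-2}\le a/2$; the borderline case $q=2$ with $p>2$ is handled by absorbing the linear term $bu$ into $-au$, replacing the constant by the effective $a-b$ (which must be positive by a standard test-function argument against \eqref{S-N}). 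Theorem~\ref{thm2} then applies to $(u,w)$ and produces the desired radial symmetry and strict monotonicity of $u$ about some $x_{0}\in\mathbb{R}^{2}$.

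The only genuinely non-mechanical step, and hence the main obstacle, is establishing $w(x)\to-\infty$ with enough control, which is precisely why the weighted integrability built into $X$ is indispensable: it is what rules out pathological tails in the log-convolution and guarantees the leading asymptotic claimed above. Every other step amounts to routine verification of the hypotheses of Theorem~\ref{thm2}.
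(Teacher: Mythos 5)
Your proof follows the same route as the paper's: set $w := -\tfrac{1}{2\pi}\log|\cdot| * u^{p}$, identify $f(u)=b|u|^{q-2}u-au$, invoke Lemma~\ref{lemma1} for regularity and the asymptotics of $w$, and apply Theorem~\ref{thm2}. The paper's own proof is essentially three lines and leaves all verifications implicit; you flesh out the two substantive ones, namely $w(x)\to-\infty$ and the monotonicity hypothesis \eqref{monotonicity}. Your decomposition argument for $w(x)\to-\infty$ is correct, and is exactly the computation carried out in the proof of Lemma~\ref{lemma1}(1) (with the opposite sign convention), so it adds nothing beyond citing that lemma.

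The one step that is not actually established is the verification of \eqref{monotonicity} when $p>2$ and $q=2$. You assert that $a-b$ ``must be positive by a standard test-function argument against \eqref{S-N},'' but you do not supply that argument, and a standard test does not give it: multiplying \eqref{S-N} by $u$ and integrating yields
\begin{equation*}
\|\nabla u\|_{L^{2}}^{2}+(a-b)\|u\|_{L^{2}}^{2}+\tfrac{\gamma}{2\pi}V_{0}(u)=0,
\end{equation*}
and since the sign-changing functional $V_{0}(u)$ has no definite sign, this does not force $a>b$; combining it with the Pohozaev identity from Lemma~\ref{Pohozaev} eliminates $(a-b)\|u\|_{2}^{2}$ rather than pinning down its sign. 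What would actually settle the matter is an asymptotic argument: when $p>2$ and $q=2$ the effective linear potential $a-b+\tfrac{\gamma}{2\pi}(\log|\cdot| * u^{p})\,u^{p-2}$ tends to $a-b$ at infinity, and if $a\le b$ a positive decaying solution cannot exist (Helmholtz-type behavior when $a<b$, and a delicate borderline at $a=b$). That is not a test-function argument, and as stated your step is a gap. To be fair, the paper glosses over precisely this edge case too — Remark~\ref{rem1} and the one-paragraph proof of Corollary~\ref{cor2} simply assert that $f(u)=b|u|^{q-2}u-au$ fits the hypotheses of Theorem~\ref{thm2} for all $q\geq 2$ and constant $a>0$, without checking \eqref{monotonicity} at $q=2$ — so you deserve credit for noticing the problem; but the fix you propose is not correct as stated.
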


\begin{rem}\label{rem2}
In \cite{CW}, Cingolani and Weth have proved radial symmetry (up to translation) of positive solutions to \eqref{S-N} and \eqref{g-S-P} for $p=2$. Our Theorem \ref{thm2} and Corollary \ref{cor2} extend the symmetry results in \cite{CW} from $p=2$ to general $p\geq2$. The corresponding results for higher dimensions $d\geq3$ can be found in \cite{CD,DFHQW,DFQ,DL,DQ,L,MPT,MZ}. We should note that the 2-D case is quite different from higher dimension cases $d\geq3$, since the logarithmic convolution kernel is sign-changing.
\end{rem}

The rest of this paper is organized as follows. In Section 2, we will set up the variational framework and establish some useful preliminary lemmas and estimates for energy functional $I$ and function space $X$. We establish the key strong compactness condition for Cerami sequences (up to $\mathbb{Z}^{2}$-translations) and the quantitative deformation lemma in Section 3 (see Theorem \ref{Cerami} and Lemma \ref{deformation}). Section 4 is devoted to proving Theorem \ref{thm1}. In Section 5, we will prove Theorem \ref{thm3}. In Section 6, we carry out the proof of Theorem \ref{thm2} and Corollary \ref{cor2}.

\section{Preliminaries}
In this section, we will give some necessary preliminary knowledge for energy functional $I(u)$ and function space $X$. In the following, we assume that $a\in L^{\infty}(\mathbb{R}^{2})$ satisfies $\inf_{\mathbb{R}^{2}}a>0$.

First, we introduce some basic notations. The function space $X$ is a Banach space equipped with the norm
\begin{equation}\label{2-1}
  \|u\|_X:=\|u\|_{H^1(\mathbb{R}^{2})} + \|u\|_{*},
\end{equation}
where
\begin{equation}\label{2-2}
  \|u\|_{*}:=\|u\|_{L^p(d\mu)}=\left(\int_{\mathbb{R}^{2}}\log\left(1+|x|\right)|u|^{p}dx\right)^{\frac{1}{p}}=\left(\int_{\mathbb{R}^{2}}|u|^pd\mu\right)^{\frac{1}{p}},
\end{equation}
and
\begin{equation}\label{2-3}
  \|u\|_{H^1}:=\sqrt{\langle u,u \rangle_{H^{1}}}=\left(\int_{\mathbb{R}^{2}}\left(|\nabla u|^2+a(x)u^2\right)dx\right)^{\frac{1}{2}}
\end{equation}
with the $H^{1}(\mathbb{R}^{2})$ equivalent inner product given by
\begin{equation}\label{2-4}
  \langle u,v \rangle_{H^{1}}:=\int_{\mathbb{R}^{2}}\left(\nabla u \cdot \nabla v+a(x)uv\right)dx.
\end{equation}

Now we define the following three bilinear functionals:
\begin{equation}\label{2-5}
  B_1(f,g):=\int_{\mathbb{R}^{2}}\int_{\mathbb{R}^{2}}\log\left(1+|x-y|\right)f(x)g(y)dxdy,
\end{equation}
\begin{equation}\label{2-6}
  B_2(f,g):=\int_{\mathbb{R}^{2}}\int_{\mathbb{R}^{2}}\log\left(1+\frac{1}{|x-y|}\right)f(x)g(y)dxdy,
\end{equation}
\begin{equation}\label{2-7}
  B_0(f,g):=B_1(f,g)-B_2(f,g)=\int_{\mathbb{R}^{2}}\int_{\mathbb{R}^{2}}\log(|x-y|)f(x)g(y)dxdy.
\end{equation}
By Hardy-Littlewood-Sobolev inequality, $B_2(f,g)\in L^{\infty}(\mathbb{R}^{2})$ and has the following upper bound:
\begin{equation}\label{2-8}
  \left|B_2(f,g)\right|\leq\int_{\mathbb{R}^{2}}\int_{\mathbb{R}^{2}}\frac{1}{|x-y|}\big|f(x)g(y)\big|dxdy\leq C\|f\|_{L^{\frac{4}{3}}(\mathbb{R}^{2})}\|g\|_{L^{\frac{4}{3}}(\mathbb{R}^{2})}.
\end{equation}
Correspondingly, we define the following functionals associated to the above bilinear forms:
\begin{equation}\label{2-9}
  V_1(u):=B_1(|u|^p,|u|^p)=\int_{\mathbb{R}^{2}}\int_{\mathbb{R}^{2}}\log\left(1+|x-y|\right)|u|^p(x)|u|^p(y)dxdy,
\end{equation}
\begin{equation}\label{2-10}
  V_2(u):=B_2(|u|^p,|u|^p)=\int_{\mathbb{R}^{2}}\int_{\mathbb{R}^{2}}\log\left(1+\frac{1}{|x-y|}\right)|u|^p(x)|u|^p(y)dxdy,
\end{equation}
\begin{equation}\label{2-11}
  V_0(u):=B_0(|u|^p,|u|^p)=\int_{\mathbb{R}^{2}}\int_{\mathbb{R}^{2}}\log(|x-y|)|u|^p(x)|u|^p(y)dxdy.
\end{equation}
As a consequence of \eqref{2-8}, we infer immediately the following boundedness for $V_2(u)$:
\begin{equation}\label{2-12}
  |V_2(u)| \leq C \||u|^p \|_{L^{\frac{4}{3}}}^2= C \|u \|_{L^{\frac{4p}{3}}}^{2p},
\end{equation}
which indicates that $V_{2}$ is well-defined for $u\in L^{\frac{4p}{3}}(\mathbb{R}^{2})$. Since $p\geq 2$ and $H^1(\mathbb{R}^2)\hookrightarrow L^r(\mathbb{R}^2)$ for each $2\leq r<+\infty$, it follows from \eqref{2-12} that $V_2(u)$ is well-defined for $H^1$ functions.

Using these notations, we can rewrite the energy functional in the following form:
\begin{equation}\label{2-13}
  I(u)=\frac{1}{2}\|u\|_{H^1(\mathbb{R}^{2})}^2+\frac{\gamma}{4p\pi}V_0(u)-\frac{b}{q}\|u\|_{L^q(\mathbb{R}^{2})}^q.
\end{equation}

Next, we will give several useful lemmas.

\begin{lem}\label{epsilon}
Given any $0<p<+\infty$. For any $\epsilon>0$, there is a constant $C_{\epsilon,p}>0$ such that, for all $a, \, b\in\mathbb{C}$,
\begin{equation*}
  \left||a+b|^p-|b|^p\right|\leq \epsilon|b|^p+C_{\epsilon,p}|a|^p.
\end{equation*}
\end{lem}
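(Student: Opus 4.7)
My plan is to prove this inequality by a homogeneity argument combined with a split into two regimes. The trivial cases are immediate: if $a=0$, the left-hand side vanishes; if $b=0$, the inequality reduces to $|a|^p \leq C_{\epsilon,p}|a|^p$, which holds for $C_{\epsilon,p}\geq 1$. From now on I assume $a,b\in\mathbb{C}\setminus\{0\}$.

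Since both sides of the proposed estimate are positively homogeneous of degree $p$ in $(a,b)$, I divide through by $|b|^{p}$ and set $\eta:=a/|b|$ and $\zeta:=b/|b|$, so $|\zeta|=1$. The task reduces to showing, for every $\epsilon>0$, the existence of $C_{\epsilon,p}>0$ such that
\begin{equation*}
\bigl||\eta+\zeta|^{p}-1\bigr| \leq \epsilon + C_{\epsilon,p}\,|\eta|^{p}
\end{equation*}
uniformly for all $\eta\in\mathbb{C}$ and $\zeta\in\mathbb{C}$ with $|\zeta|=1$.

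Next, I choose a cutoff $\delta=\delta(\epsilon,p)\in(0,1/2)$ and split on the size of $|\eta|$. For $|\eta|\leq\delta$, the triangle inequality gives $|\eta+\zeta|\in[1-\delta,1+\delta]\subset[1/2,3/2]$, so by the uniform continuity of $t\mapsto t^{p}$ on $[1/2,3/2]$ I may take $\delta$ sufficiently small that $\bigl||\eta+\zeta|^{p}-1\bigr|\leq\epsilon$ uniformly in $\zeta$. For $|\eta|\geq\delta$, a crude triangle bound yields
\begin{equation*}
\bigl||\eta+\zeta|^{p}-1\bigr|\leq (|\eta|+1)^{p}+1 \leq \bigl(1+\delta^{-1}\bigr)^{p}|\eta|^{p}+\delta^{-p}|\eta|^{p},
\end{equation*}
which is at most $C_{\epsilon,p}|\eta|^{p}$ for a constant depending only on $\epsilon$ and $p$. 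Combining the two regimes gives the claim.

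I do not expect any serious obstacle: the estimate is a standard soft consequence of the homogeneity of $|\cdot|^{p}$ and the continuity of $t\mapsto t^{p}$ away from $0$. The only point worth noting is that for $0<p<1$ the map $t\mapsto t^{p}$ fails to be Lipschitz at the origin, but this is irrelevant because the first regime confines the argument to a compact interval bounded away from $0$; and when working over $\mathbb{C}$ rather than $\mathbb{R}$, only the modulus appears, so no genuine complex-analytic issue intervenes.
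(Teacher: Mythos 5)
Your proof is correct and self-contained. The paper does not actually prove this lemma; it simply refers to \cite{LL} for an elementary argument, so there is no internal proof to compare against. Your homogeneity reduction followed by the split into a small-perturbation regime (controlled by the continuity of $t\mapsto t^{p}$ near $t=1$, on a compact interval bounded away from the origin, so the non-Lipschitz behaviour at $0$ for $p<1$ is irrelevant) and a large-perturbation regime (controlled by crude triangle estimates absorbed into $C_{\epsilon,p}|\eta|^{p}$) is a clean way to handle all $p>0$ at once, and the degenerate cases $a=0$ and $b=0$ are also correctly dispatched.
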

The proof of Lemma \ref{epsilon} is elementary, see \cite{LL} for example.

\begin{lem}\label{vanish} (P. L. Lions, \cite{Lions1}, 1984)
Let $r>0$ and $2\leq q<2^*$ ($2^{*}:=\frac{2d}{d-2}$ if $d\geq3$, $2^{*}=+\infty$ if $d=2$). If $(u_n)_{n}$ is bounded in $H^1(\mathbb{R}^d)$ and
\begin{equation*}
  \lim_{n\rightarrow\infty}\sup_{y \in \mathbb{R}^d}\int_{B_r(y)}|u_n|^qdx=0,
\end{equation*}
then $u_n\rightarrow0$ in $L^p(\mathbb{R}^d)$ for all $2<p<2^*$.
\end{lem}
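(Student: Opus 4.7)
My plan is to combine Sobolev embedding on bounded balls with a H\"older interpolation and a finite-overlap covering. First I would fix a lattice covering $\{B_{r}(y_{i})\}_{i \in \mathbb{N}}$ of $\mathbb{R}^{d}$ with centres $y_{i} \in \alpha \mathbb{Z}^{d}$ for a sufficiently small spacing $\alpha > 0$; this both covers $\mathbb{R}^{d}$ and has uniformly bounded multiplicity $N_{0}$, which yields the two elementary inequalities
\begin{equation*}
\|u\|_{L^{p}(\mathbb{R}^{d})}^{p} \leq \sum_{i} \|u\|_{L^{p}(B_{r}(y_{i}))}^{p} \qquad \text{and} \qquad \sum_{i} \|u\|_{H^{1}(B_{r}(y_{i}))}^{2} \leq N_{0}\, \|u\|_{H^{1}(\mathbb{R}^{d})}^{2}.
\end{equation*}

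The heart of the argument is a single local estimate. Pick an auxiliary exponent $s$ with $q < s \leq 2^{*}$ (if $d \geq 3$) or $q < s < \infty$ (if $d = 2$); by translation invariance the Sobolev embedding $H^{1}(B_{r}(y)) \hookrightarrow L^{s}(B_{r}(y))$ has a constant $C = C(r,s,d)$ independent of $y$. For any $p_{0}$ with $q < p_{0} < s$, let $\theta \in (0,1)$ satisfy $\frac{1}{p_{0}} = \frac{1-\theta}{q} + \frac{\theta}{s}$; H\"older and Sobolev then give
\begin{equation*}
\|u\|_{L^{p_{0}}(B_{r}(y))}^{p_{0}} \leq \|u\|_{L^{q}(B_{r}(y))}^{(1-\theta)p_{0}}\, \|u\|_{L^{s}(B_{r}(y))}^{\theta p_{0}} \leq C\, \|u\|_{L^{q}(B_{r}(y))}^{(1-\theta)p_{0}}\, \|u\|_{H^{1}(B_{r}(y))}^{\theta p_{0}}.
\end{equation*}
The key tuning is to set $\theta p_{0} = 2$, which pins down $s = 2q/(q + 2 - p_{0})$; this is admissible provided $p_{0}$ lies just above $q$ (for $d = 2$ any $p_{0} \in (q, q+2)$ works, while for $d \geq 3$ one also needs $p_{0} \leq 2 + 2q/d$, a range which is nonempty precisely because $q < 2^{*}$). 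Summing the local estimate over the cover and pulling the $L^{q}$-factor outside as a supremum then yields
\begin{equation*}
\|u\|_{L^{p_{0}}(\mathbb{R}^{d})}^{p_{0}} \leq C\, \Bigl( \sup_{y \in \mathbb{R}^{d}} \|u\|_{L^{q}(B_{r}(y))} \Bigr)^{p_{0} - 2} \|u\|_{H^{1}(\mathbb{R}^{d})}^{2},
\end{equation*}
so the hypothesis forces $u_{n} \to 0$ in $L^{p_{0}}(\mathbb{R}^{d})$ for this one exponent $p_{0}$.

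Finally I would bootstrap from $p_{0}$ to the full range $(2, 2^{*})$ by a classical H\"older interpolation. For $p \in (2, p_{0}]$, interpolation between $L^{2}$ and $L^{p_{0}}$ together with the uniform bound $\|u_{n}\|_{L^{2}} \leq C\, \|u_{n}\|_{H^{1}}$ gives $\|u_{n}\|_{L^{p}} \to 0$; for $p \in (p_{0}, 2^{*})$, pick any $p_{1} \in (p, 2^{*})$, use Sobolev to bound $\|u_{n}\|_{L^{p_{1}}}$ uniformly, and interpolate between $L^{p_{0}}$ and $L^{p_{1}}$. The main technical subtlety I expect is exactly the admissibility of $s$: forcing $\theta p_{0} = 2$ fixes $s$, and one must check $s \leq 2^{*}$, which is why the direct covering estimate is only available for $p_{0}$ in a narrow window just above $q$ and why the final H\"older interpolation step is indispensable.
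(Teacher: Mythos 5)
Your proposal is correct, and it is essentially the standard proof of Lions' vanishing lemma; the paper does not prove the statement itself but simply cites Lions (1984) and Willem's \emph{Minimax Theorems}, where exactly this lattice-covering-plus-local-interpolation argument is carried out. The only points to verify carefully are the ones you flag: the admissibility window $p_{0}\in(q,\,2+2q/d]$ for $d\geq 3$ (nonempty iff $q<2^{*}$) so that $s=2q/(q+2-p_{0})\leq 2^{*}$ and $\theta p_{0}=2$ is achievable, and the final two-sided interpolation to extend from the single exponent $p_{0}$ to all of $(2,2^{*})$ — both of which you handle correctly.
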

For the proof of Lemma \ref{vanish}, we refer to \cite{Lions1} or \cite{W}.

\begin{lem}[Basic properties for the functionals $I$, $V_{i}$ and the function space $X$]\label{lemma0}
We have the following properties: \\
(1)\, $X=H^1\cap L^p(d\mu)$ is compactly embedded in $L^s(\mathbb{R}^2)$ for all $s\in[p,+\infty)$; \\
(2)\, The functionals $V_1$, $V_2$, $V_0$, $I\in C^{1}(X,\mathbb{R})$, moreover,
\begin{equation*}
  \big\langle V_i^{\prime}(u),v \big\rangle=2p B_i(|u|^p,|u|^{p-2}uv)
\end{equation*}
for every $u,v\in X$ and $i=0,1,2$; \\
(3)\, $V_1$ is weakly lower semi-continuous on $H^1(\mathbb{R}^2)$, $I$ is weakly lower semi-continuous on $X$ and lower semi-continuous on $H^1(\mathbb{R}^2)$.
\end{lem}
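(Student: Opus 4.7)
The plan is to verify the three claims in sequence, each building on the previous. For (1), continuity of $X\hookrightarrow L^s(\mathbb{R}^2)$ for $s\in[p,\infty)$ is immediate from $H^1(\mathbb{R}^2)\hookrightarrow L^s(\mathbb{R}^2)$ (valid since $p\geq 2$). For compactness, take any bounded sequence $(u_n)\subset X$ and extract a weakly convergent subsequence $u_n\rightharpoonup u$ in $X$. On any ball $B_R$, the Rellich--Kondrachov theorem yields strong convergence $u_n\to u$ in $L^s(B_R)$. On the complement, the logarithmic weight gives a uniform tail bound
$$\int_{|x|>R}|u_n|^p\,dx\;\leq\;\frac{1}{\log(1+R)}\int_{\mathbb{R}^2}\log(1+|x|)|u_n|^p\,dx\;\leq\;\frac{C}{\log(1+R)}.$$
For $s>p$, choose any $r>s$ and interpolate via $\|u_n-u\|_{L^s(B_R^c)}\leq\|u_n-u\|_{L^p(B_R^c)}^{\theta}\|u_n-u\|_{L^r(B_R^c)}^{1-\theta}$ with $\tfrac{1}{s}=\tfrac{\theta}{p}+\tfrac{1-\theta}{r}$; boundedness of $(u_n)$ in $L^r$ from $H^1\hookrightarrow L^r$ then forces the tail to vanish uniformly in $n$. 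A standard $\varepsilon$-argument (large $R$ for the tail, large $n$ for $B_R$) completes the proof.

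For (2), the central tool is the elementary inequality $\log(1+|x-y|)\leq\log(1+|x|)+\log(1+|y|)$; two applications of H\"older, one against $dx$ and one against $d\mu$, yield
$$\bigl|B_1(|u|^p,|u|^{p-2}uv)\bigr|\;\leq\;\|u\|_*^p\|u\|_{L^p}^{p-1}\|v\|_{L^p}+\|u\|_{L^p}^p\|u\|_*^{p-1}\|v\|_*\;\leq\;C\|u\|_X^{2p-1}\|v\|_X.$$
The analogous estimate for $B_2$ follows from \eqref{2-12}. This identifies the candidate Gateaux derivative $\langle V_i'(u),v\rangle=2pB_i(|u|^p,|u|^{p-2}uv)$ as a bounded linear functional on $X$. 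Fr\'echet differentiability then reduces to estimating the pointwise remainder $|u+v|^p-|u|^p-p|u|^{p-2}uv$, which I would control via Lemma \ref{epsilon} together with the mean value theorem, and then integrate through dominated convergence. Continuity of $u\mapsto V_i'(u)$ in the $X^*$-norm follows by the same scheme. Once $V_1$ and $V_2$ are shown to be $C^1$, both $V_0=V_1-V_2$ and $I$ inherit $C^1$-regularity, since the quadratic term $\|\cdot\|_{H^1}^2$ and the $L^q$-term are standard.

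For (3), weak lower semi-continuity of $V_1$ on $H^1$ follows directly from Fatou's lemma: if $u_n\rightharpoonup u$ in $H^1$, then along a subsequence $u_n\to u$ a.e., and since the kernel $\log(1+|x-y|)\geq 0$ is non-negative, Fatou applied to the double integral yields $V_1(u)\leq\liminf V_1(u_n)$. The estimate \eqref{2-12} together with the compact embedding $X\hookrightarrow L^{4p/3}$ from (1) (note $4p/3\geq p$) shows that $V_2$ is weakly sequentially continuous on $X$, so $V_0=V_1-V_2$ is weakly l.s.c.\ on $X$. Combined with weak l.s.c.\ of $\|\cdot\|_{H^1}^2$ and weak continuity of $\|\cdot\|_{L^q}^q$ on $X$ (again by compact embedding), this gives weak l.s.c.\ of $I$ on $X$. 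The same Fatou-plus-continuity argument, now applied along strong $H^1$-convergent sequences, delivers the l.s.c.\ of $I$ on $H^1$.

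The main obstacle I anticipate is in (2): the Fr\'echet remainder must be estimated in a way that is genuinely $o(\|v\|_X)$ and uniform on bounded subsets of $X$, which requires splitting the integration domain (e.g.\ by whether $|v|\leq|u|$ or not) and absorbing the lower-order contributions using Lemma \ref{epsilon}, before applying dominated convergence to both iterated integrals in the definition of $B_1$.
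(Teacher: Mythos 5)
Your proposal follows essentially the same route as the paper: the tail bound from the weighted norm combined with Rellich on balls for (1); the elementary splitting $\log(1+|x-y|)\leq\log(1+|x|)+\log(1+|y|)$ plus Hardy--Littlewood--Sobolev for $B_2$ in (2); and Fatou for $V_1$ together with compact (resp.\ continuous) embedding into $L^{4p/3}$ for $V_2$ in (3), including the same decomposition-into-continuous-plus-l.s.c.\ idea for the last claim of (3).

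One remark on (2): the ``main obstacle'' you anticipate --- a direct estimate of the Fr\'echet remainder $|u+v|^p-|u|^p-p|u|^{p-2}uv$ made uniform in $\|v\|_X$ --- is avoidable and is in fact avoided by the paper. The paper computes the Gateaux derivative, checks it defines a bounded linear functional (exactly your estimate $\|V_1'(u)\|_{X'}\leq C\|u\|_X^{2p-1}$), and then proves that $u\mapsto V_i'(u)$ is norm-continuous from $X$ to $X'$; continuity of the Gateaux derivative already implies Fr\'echet $C^1$-regularity by a standard theorem, so no direct remainder estimate is needed. The messy part the paper does handle carefully is the continuity step itself, which requires a case split ($2\leq p\leq 3$ vs.\ $p>3$) to estimate $\big||u_n|^{p-2}-|u|^{p-2}\big|$; your ``same scheme'' phrase glosses over this, and it is worth noting that this, not the Fr\'echet remainder, is where the real bookkeeping lives.
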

\begin{proof}
\emph{Proof of (1):} By Rellich's compact embedding theorem (see \cite{B}), we only need to show the uniformly $L^{s}$-integrability for any bounded sequence $\{u_n\}\subset X$. Indeed, since there exists a $M>0$ such that
\begin{equation}\label{2-14}
  \int_{|x|\geq R}\log(1+R)|u_n|^p dx\leq\int_{|x|\geq R}\log(1+|x|)|u_n|^p dx\leq M,
\end{equation}
one can infer that for any $\epsilon>0$, there exists a $R(\epsilon)>0$ sufficiently large such that, for any $R>R(\epsilon)$,
\begin{equation}\label{2-15}
  \int_{|x| \geq R}|u_n|^p dx \leq \frac{M}{\log (1+R)}\leq\epsilon,
\end{equation}
from which the uniformly $L^{s}$-integrability follows immediately. Hence, the embedding $X\hookrightarrow \hookrightarrow L^s(\mathbb{R}^{2})$ is compact for all $s\in[p,+\infty)$.

\emph{Proof of (2):} We first show that $V_1$ is well-defined on the function space $X$:
\begin{align}\label{2-16}
&\quad\,\, \big|V_1(u)\big|=\left|\int_{\mathbb{R}^{2}}\int_{\mathbb{R}^{2}}\log(1+|x-y|)|u|^p(x)|u|^p(y)dxdy\right| \\
                  \nonumber &\leq  \left| \int_{\mathbb{R}^{2}}\int_{\mathbb{R}^{2}} \log\left((1+|x|)(1+|y|)\right)|u|^p(x)|u|^p(y)dxdy \right| \\
                  \nonumber &=\left| \int_{\mathbb{R}^{2}}\int_{\mathbb{R}^{2}} \log(1+|x|)|u|^p(x)|u|^p(y)dxdy     \right|
                   +\left| \int_{\mathbb{R}^{2}}\int_{\mathbb{R}^{2}} \log(1+|y|)|u|^p(x)|u|^p(y)dxdy     \right| \\
                  \nonumber &=2\|u\|_*^p \|u\|_{L^p(\mathbb{R}^{2})}^p\leq C\|u\|^{2p}_{X}.
\end{align}
Now, letting $u_n\to u$ in $X$ as $n\rightarrow+\infty$, we get
\begin{align}\label{2-17}
&\quad\, \left|V_1(u_n)-V_1(u)\right| \\
\nonumber &=\left|\int_{\mathbb{R}^{2}}\int_{\mathbb{R}^{2}}\log(1+|x-y|)\big(|u_n|^p(x)|u_n|^p(y)-|u|^p(x)|u|^p(y)\big)dxdy\right| \\
                           \nonumber &\leq\int_{\mathbb{R}^{2}}\int_{\mathbb{R}^{2}}\log\left((1+|x|)(1+|y|)\right)\big||u_n|^p(x)|u_n|^p(y)-|u|^p(x)|u|^p(y)\big|dxdy \\
                           \nonumber &\leq \int_{\mathbb{R}^{2}}\int_{\mathbb{R}^{2}}\log(1+|x|)\big||u_n|^p(x)|u_n|^p(y)-|u|^p(x)|u|^p(y)\big|dxdy \\
                           \nonumber &\quad+\int_{\mathbb{R}^{2}}\int_{\mathbb{R}^{2}}\log(1+|y|)\big||u_n|^p(x)|u_n|^p(y)-|u|^p(x)|u|^p(y)\big|dxdy \\
                           \nonumber &=: I^{1}_{n}+I^{2}_{n}.
\end{align}
For the sequence $I^{1}_{n}$, one has, as $n\rightarrow+\infty$,
\begin{align}\label{2-18}
& I^{1}_{n}= \int_{\mathbb{R}^{2}}\int_{\mathbb{R}^{2}}\log(1+|x|)\big||u_n|^p(x)|u_n|^p(y)-|u|^p(x)|u|^p(y)\big|dxdy \\
\nonumber &\leq \int_{\mathbb{R}^{2}}\int_{\mathbb{R}^{2}} \log (1+|x|) |u_n|^p(x) \big| |u_n|^p(y)-|u|^p(y)\big| dxdy\\
\nonumber &\quad +\int_{\mathbb{R}^{2}}\int_{\mathbb{R}^{2}} \log (1+|x|) |u|^p(y) \big| |u_n|^p(x)-|u|^p(x)\big| dxdy \\
\nonumber &\leq p\|u_n\|_{*}^{p}\left(\|u_{n}\|_{L^p}^{p-1}+\|u\|_{L^p}^{p-1}\right)\|u_n-u\|_{L^p} \\
\nonumber &\quad +p\left(\|u_{n}\|_{*}^{p-1}+\|u\|_{*}^{p-1}\right)\|u_n-u\|_{*}\|u\|_{L^p}^{p}\longrightarrow 0.
\end{align}
Through similar calculations, we also have $I^{2}_{n}\rightarrow0$ as $n\rightarrow+\infty$. Thus we get $V_1(u_n) \to V_1(u)$ and the functional $V_1$ is continuous on $X$.

The Gateaux derivative of $V_{1}$ at $u\in X$ is given by
\begin{eqnarray}\label{2-19}
  && \left\langle V_1^{\prime}(u),v \right\rangle=2p B_1(|u|^p,|u|^{p-2}uv) \\
  \nonumber &&\qquad\qquad\,\,\, =2p\int_{\mathbb{R}^{2}}\int_{\mathbb{R}^{2}} \log(1+|x-y|)|u|^p(x)|u|^{p-2}(y)u(y)v(y)dxdy
\end{eqnarray}
for any $v\in X$. We have the following estimate:
\begin{align}\label{2-20}
\big| \left \langle V_1^{\prime}(u),v \right \rangle \big| &=2p\left| \int_{\mathbb{R}^{2}}\int_{\mathbb{R}^{2}} \log(1+|x-y|)|u|^p(x)|u|^{p-2}(y)u(y)v(y)dxdy  \right|\\
   \nonumber &\leq 2p \int_{\mathbb{R}^{2}}\int_{\mathbb{R}^{2}} \log(1+|x-y|)|u|^p(x)|u|^{p-1}(y)|v|(y)dxdy  \\
   \nonumber &\leq 2p \int_{\mathbb{R}^{2}}\int_{\mathbb{R}^{2}} \log(1+|x|)|u|^p(x)|u|^{p-1}(y)|v|(y)dxdy \\
   \nonumber &\quad +2p \int_{\mathbb{R}^{2}}\int_{\mathbb{R}^{2}} \log(1+|y|)|u|^p(x)|u|^{p-1}(y)|v|(y)dxdy \\
   \nonumber &\leq 2p \|u\|_{*}^{p}\| u\|_{L^p}^{p-1}\|v\|_{L^p}+2p\|u\|_{L^p}^p\|u\|_{*}^{p-1}\|v\|_{*} \\
   \nonumber &\leq C\|u\|_{X}^{2p-1}\|v\|_X,
\end{align}
thus $V_1^{\prime}(u)\in X'$ and $\|V_1^{\prime}(u)\|_{X'}\leq C\|u\|_{X}^{2p-1}$.

Now, assume $u_n\to u$ in $X$ as $n\rightarrow+\infty$. For each $v\in X$, we have
\begin{flalign}
\nonumber &\big| \left \langle V_1^{\prime}(u_n)- V_1^{\prime}(u),v \right \rangle \big|=2p\left|B_1(|u_n|^p,|u_n|^{p-2}u_nv)-B_1(|u|^p,|u|^{p-2}uv)\right| \\
  \nonumber \leq& 2p \int_{\mathbb{R}^{2}}\int_{\mathbb{R}^{2}} \log(1+|x-y|)\left||u_n|^p(x)|u_n|^{p-2}(y)u_n(y)v(y)-|u|^p(x)|u|^{p-2}(y)u(y)v(y)\right| dxdy\\
  \nonumber \leq& 2p \int_{\mathbb{R}^{2}}\int_{\mathbb{R}^{2}} \log(1+|x|)\left||u_n|^p(x)|u_n|^{p-2}(y)u_n(y)v(y)-|u|^p(x)|u|^{p-2}(y)u(y)v(y)\right| dxdy\\
  \nonumber &+2p \int_{\mathbb{R}^{2}}\int_{\mathbb{R}^{2}} \log(1+|y|)\left||u_n|^p(x)|u_n|^{p-2}(y)u_n(y)v(y)-|u|^p(x)|u|^{p-2}(y)u(y)v(y)\right| dxdy\\
  \nonumber =:& 2p\left(II^{1}_{n}+II^{2}_{n}\right).
\end{flalign}
For the sequence $II^{1}_{n}$, one has, as $n\rightarrow+\infty$,
\begin{flalign}\label{2-22}
II^{1}_{n}&\leq \int_{\mathbb{R}^{2}}\int_{\mathbb{R}^{2}} \log(1+|x|)|u_n|^p(x)\left| |u_n|^{p-2}(y)u_n(y)
-|u|^{p-2}(y)u(y)\right| |v|(y)dxdy\\
 \nonumber &\quad+ \int_{\mathbb{R}^{2}}\int_{\mathbb{R}^{2}} \log(1+|x|) \left| |u_n|^p(x) -|u|^p(x) \right| |u|^{p-1}(y)|v|(y) dxdy \\
 \nonumber &\leq \|u_n\|_*^p\int_{\mathbb{R}^{2}} \left| |u_n|^{p-2}(y)u_n(y)
     -|u|^{p-2}(y)u(y)\right| |v|(y)dy \\
 \nonumber &\quad+\|u\|_{L^p}^{p-1}\|v\|_{L^{p}}\int_{\mathbb{R}^{2}} \log(1+|x|) \left||u_n|^p(x)-|u|^p(x)\right| dx \\
 \nonumber &\leq \|u_n\|_{*}^{p}\int_{\mathbb{R}^{2}}|u_n|^{p-2}|u_n-u| |v|
     + \left| |u_n|^{p-2} -|u|^{p-2}\right||u||v| dy \\
 \nonumber & \quad+ p\|u\|_{L^p}^{p-1}\|v\|_{L^{p}}\left(\|u_{n}\|_{*}^{p-1}+\|u\|_{*}^{p-1}\right)\|u_{n}-u\|_{*} \\
 \nonumber &\leq \|u_n\|_{*}^{p}\bigg(\|u_n\|_{L^p}^{p-2}\|u_n-u\|_{L^p} \|v\|_{L^p}
       +\int_{\mathbb{R}^{2}}\left| |u_n|^{p-2}-|u|^{p-2}\right||u||v|dy\bigg) \\
 \nonumber &\quad + p\|u\|_{L^p}^{p-1}\|v\|_{L^{p}}\left(\|u_{n}\|_{*}^{p-1}+\|u\|_{*}^{p-1}\right)\|u_{n}-u\|_{*}.
\end{flalign}
Therefore, if $2\leq p\leq3$, we derive that, as $n\rightarrow+\infty$,
\begin{eqnarray}\label{2-23}
  II^{1}_{n}&\leq&\|u_n\|_{*}^{p}\bigg(\|u_n\|_{L^p}^{p-2}\|u_n-u\|_{L^p}+sgn(p-2)\|u_{n}-u\|_{L^{p}}^{p-2}\|u\|_{L^{p}}\bigg)\|v\|_{X} \\
      \nonumber &&+p\|u\|_{L^p}^{p-1}\left(\|u_{n}\|_{*}^{p-1}+\|u\|_{*}^{p-1}\right)\|u_{n}-u\|_{*}\|v\|_{X}\rightarrow0;
\end{eqnarray}
if $3<p<+\infty$, we derive that, as $n\rightarrow+\infty$,
\begin{eqnarray}\label{2-24}
  II^{1}_{n}&\leq&\|u_n\|_{*}^{p}\bigg(\|u_n\|_{L^p}^{p-2}\|u_n-u\|_{L^p}+(p-2)\left(\|u_{n}\|_{L^{p}}^{p-3}+\|u\|_{L^{p}}^{p-3}\right)\|u_{n}-u\|_{L^{p}} \\
      \nonumber && \|u\|_{L^{p}}\bigg)\|v\|_{X}+p\|u\|_{L^p}^{p-1}\left(\|u_{n}\|_{*}^{p-1}+\|u\|_{*}^{p-1}\right)\|u_{n}-u\|_{*}\|v\|_{X}\rightarrow0.
\end{eqnarray}
Similar estimates as \eqref{2-23} and \eqref{2-24} can also be obtained for $II^{2}_{n}$ and we can deduce that $II^{2}_{n}\rightarrow0$ as $n\rightarrow+\infty$. Hence, $V_1^{\prime}(u_n) \to V_1^{\prime}(u)$ in $X'$ and the Gateaux derivative $V_1^{\prime}$ is continuous, that is, $V_{1}\in C^{1}(X,\mathbb{R})$.

From \eqref{2-12}, we have known that $V_{2}$ is well-defined on $L^{\frac{4p}{3}}(\mathbb{R}^{2})$ and hence on $X$. We will show that $V_{2}\in C^{1}(L^{\frac{4p}{3}}(\mathbb{R}^{2}),\mathbb{R})$. To this end, letting $u_{n}\rightarrow u$ in $L^{\frac{4p}{3}}(\mathbb{R}^{2})$ as $n\rightarrow+\infty$, by Hardy-Littlewood-Sobolev inequality, we get
\begin{equation}\label{2-25}
\begin{aligned}&\left|V_{2}\left(u_{n}\right)-V_{2}(u)\right| \\
\leq & \int_{\mathbb{R}^{2}}\int_{\mathbb{R}^{2}} \log\left(1+\frac{1}{|x-y|}\right)|u_{n}|^{p}(x)\left||u_{n}|^{p}(y)-|u|^{p}(y)\right| dxdy \\ &+\int_{\mathbb{R}^{2}}\int_{\mathbb{R}^{2}} \log \left(1+\frac{1}{|x-y|}\right)\left||u_{n}|^{p}(x)-|u|^{p}(x)\right||u|^{p}(y) dx dy \\ \leq & \int_{\mathbb{R}^{2}}\int_{\mathbb{R}^{2}}\frac{p}{|x-y|} |u_{n}|^{p}(x)\left(|u_{n}|^{p-1}(y)+|u|^{p-1}(y)\right)\left|u_{n}(y)-u(y)\right|dxdy \\ &+\int_{\mathbb{R}^{2}}\int_{\mathbb{R}^{2}}\frac{p}{|x-y|}\left(|u_{n}|^{p-1}(x)+|u|^{p-1}(x)\right)\left|u_{n}(x)-u(x)\right||u|^{p}(y) dxdy \\ \leq & C\left(\|u_{n}\|_{L^{\frac{4p}{3}}}^{p}+\|u\|_{L^{\frac{4p}{3}}}^{p}\right)\left(\|u_{n}\|_{L^{\frac{4p}{3}}}^{p-1}+\|u\|_{L^{\frac{4p}{3}}}^{p-1}\right)
\|u_{n}-u\|_{L^{\frac{4p}{3}}}\rightarrow0, \end{aligned}
\end{equation}
thus $V_{2}\in C(L^{\frac{4p}{3}}(\mathbb{R}^{2}),\mathbb{R})$. The Gateaux derivative of $V_{2}$ at $u\in L^{\frac{4p}{3}}(\mathbb{R}^{2})$ is given by
\begin{eqnarray}\label{2-26}
  && \left\langle V_2^{\prime}(u),v \right\rangle=2p B_2(|u|^p,|u|^{p-2}uv) \\
 \nonumber &=&2p\int_{\mathbb{R}^{2}}\int_{\mathbb{R}^{2}} \log\left(1+\frac{1}{|x-y|}\right)|u|^p(x)|u|^{p-2}(y)u(y)v(y)dxdy
\end{eqnarray}
for any $v\in L^{\frac{4p}{3}}(\mathbb{R}^{2})$. Thus we have
\begin{equation}\label{2-27}
\begin{aligned}\left|\left\langle V_2^{\prime}(u),v \right\rangle\right| & \leq 2p\int_{\mathbb{R}^{2}}\int_{\mathbb{R}^{2}}\frac{1}{|x-y|} |u|^{p}(x)|u(y)|^{p-1}|v(y)|dxdy \\ & \leq C\left\||u|^{p}\right\|_{L^{\frac{4}{3}}}\left\||u|^{p-1}|v|\right\|_{L^{\frac{4}{3}}}\leq C\|u\|_{L^{\frac{4p}{3}}}^{2p-1}\|v\|_{L^{\frac{4p}{3}}}, \end{aligned}
\end{equation}
thus $V_2^{\prime}(u)\in\left(L^{\frac{4p}{3}}(\mathbb{R}^{2})\right)'$ and $\|V_2^{\prime}(u)\|\leq C\|u\|_{L^{\frac{4p}{3}}}^{2p-1}$.

Now, assume $u_n\to u$ in $L^{\frac{4p}{3}}(\mathbb{R}^{2})$ as $n\rightarrow+\infty$. For each $v\in L^{\frac{4p}{3}}(\mathbb{R}^{2})$, we have
\begin{flalign}\label{2-28}
&\big| \left \langle V_2^{\prime}(u_n)- V_2^{\prime}(u),v \right \rangle \big|=2p\left|B_2(|u_n|^p,|u_n|^{p-2}u_nv)-B_2(|u|^p,|u|^{p-2}uv)\right| \\
  \nonumber \leq& 2p \int_{\mathbb{R}^{2}}\int_{\mathbb{R}^{2}} \log\left(1+\frac{1}{|x-y|}\right)\left||u_n|^p(x)-|u|^{p}(x)\right||u_n|^{p-1}(y)|v|(y)dxdy\\
  \nonumber &+2p\int_{\mathbb{R}^{2}}\int_{\mathbb{R}^{2}}\log\left(1+\frac{1}{|x-y|}\right)|u|^p(x)\left||u_n|^{p-2}(y)u_n(y)-|u|^{p-2}(y)u(y)\right||v|(y)dxdy\\
  \nonumber \leq& \int_{\mathbb{R}^{2}}\int_{\mathbb{R}^{2}} \frac{2p}{|x-y|}\left(|u_n|^{p-1}(x)+|u|^{p-1}(x)\right)\left|u_n(x)-u(x)\right||u_n|^{p-1}(y)|v|(y)dxdy \\
 \nonumber &+\int_{\mathbb{R}^{2}}\int_{\mathbb{R}^{2}}\frac{2p}{|x-y|}|u|^{p}(x)\left|u_n(y)-u(y)\right||u_n|^{p-2}(y)|v|(y)dxdy \\
 \nonumber &+\int_{\mathbb{R}^{2}}\int_{\mathbb{R}^{2}}\frac{2p}{|x-y|}|u|^{p}(x)\left||u_n|^{p-2}(y)-|u|^{p-2}(y)\right||u|(y)|v|(y)dxdy.
\end{flalign}
Therefore, by Hardy-Littlewood-Sobolev inequality, if $2\leq p\leq3$, we can derive that, as $n\rightarrow+\infty$,
\begin{eqnarray}\label{2-29}
 && \big| \left \langle V_2^{\prime}(u_n)- V_2^{\prime}(u),v \right \rangle \big| \\
  \nonumber &\leq& C\|u\|_{L^{\frac{4p}{3}}}^{p}\bigg(\|u_n\|_{L^{\frac{4p}{3}}}^{p-2}\|u_n-u\|_{L^{\frac{4p}{3}}}+sgn(p-2)\|u_{n}-u\|_{L^{\frac{4p}{3}}}^{p-2}\|u\|_{L^{\frac{4p}{3}}}\bigg)
  \|v\|_{L^{\frac{4p}{3}}} \\
  \nonumber &&+C\|u_{n}\|_{L^{\frac{4p}{3}}}^{p-1}\left(\|u_{n}\|_{L^{\frac{4p}{3}}}^{p-1}+\|u\|_{L^{\frac{4p}{3}}}^{p-1}\right)\|u_{n}-u\|_{L^{\frac{4p}{3}}}
  \|v\|_{L^{\frac{4p}{3}}}\rightarrow0;
\end{eqnarray}
if $3<p<+\infty$, we can derive that, as $n\rightarrow+\infty$,
\begin{eqnarray}\label{2-30}
 && \quad \big| \left \langle V_2^{\prime}(u_n)- V_2^{\prime}(u),v \right \rangle \big| \\
  \nonumber &&\leq C\|u\|_{L^{\frac{4p}{3}}}^{p}\bigg(\|u_n\|_{L^{\frac{4p}{3}}}^{p-2}\|u_n-u\|_{L^{\frac{4p}{3}}}+(p-2)
  \left(\|u_{n}\|_{L^{\frac{4p}{3}}}^{p-3}+\|u\|_{L^{\frac{4p}{3}}}^{p-3}\right)\|u_{n}-u\|_{L^{\frac{4p}{3}}}\|u\|_{L^{\frac{4p}{3}}}\bigg)
  \\
  \nonumber &&\quad \times\|v\|_{L^{\frac{4p}{3}}}+C\|u_{n}\|_{L^{\frac{4p}{3}}}^{p-1}\left(\|u_{n}\|_{L^{\frac{4p}{3}}}^{p-1}+\|u\|_{L^{\frac{4p}{3}}}^{p-1}\right)\|u_{n}-u\|_{L^{\frac{4p}{3}}}
  \|v\|_{L^{\frac{4p}{3}}}\rightarrow0.
\end{eqnarray}
Therefore, $V_2^{\prime}(u_n) \to V_2^{\prime}(u)$ in $\left(L^{\frac{4p}{3}}(\mathbb{R}^{2})\right)'$ and the Gateaux derivative $V_2^{\prime}$ is continuous, that is, $V_{2}\in C^{1}(L^{\frac{4p}{3}}(\mathbb{R}^{2}),\mathbb{R})$. As a immediate consequence, we infer that $V_{2}\in C^{1}(X,\mathbb{R})$ and $\left\langle V_2^{\prime}(u),v \right\rangle=2p B_2(|u|^p,|u|^{p-2}uv)$ for any $u,v\in X$.

It follows immediately that $V_{0}\in C^{1}(X,\mathbb{R})$ and $\left\langle V_0^{\prime}(u),v \right\rangle=2p B_0(|u|^p,|u|^{p-2}uv)$ for any $u,v\in X$. Thus we deduce that the energy functional
\begin{equation}\label{2-31}
  I(u)=\frac{1}{2}\|u\|_{H^1(\mathbb{R}^{2})}^2+\frac{\gamma}{4p\pi}V_{0}(u)-\frac{b}{q}\|u\|_{L^q(\mathbb{R}^{2})}^q
\end{equation}
is $C^1$ on $X$, and
\begin{eqnarray}\label{2-32}
  &&\left\langle I^{\prime}(u),v \right\rangle=\left\langle u,v \right\rangle_{H^{1}(\mathbb{R}^{2})}+\frac{\gamma}{2\pi}B_0(|u|^p,|u|^{p-2}uv)-\int_{\mathbb{R}^{2}}b{|u|}^{q-2}uvdx \\
 \nonumber &=&\int_{\mathbb{R}^{2}}\left(-\Delta u+a(x)u+\frac{\gamma}{2\pi}\left(\log(|\cdot|)*|u|^p\right){|u|}^{p-2}u-b{|u|}^{q-2}u\right)v(x)dx
\end{eqnarray}
for any $u,v\in X$. So $u\in X$ is a critical point of $I$ if and only if it is a weak solution of \eqref{S-N} in the sense that the RHS of \eqref{2-32} vanishes for every $v\in X$.

\emph{Proof of (3):} Let sequence $\{u_n\}\subset H^1(\mathbb{R}^{2})$, $u_n \rightharpoonup u $ in $H^1(\mathbb{R}^{2})$, hence $\{u_{n}\}$ is bounded in $H^1(\mathbb{R}^{2})$ and strongly converge to $u$ in $L_{loc}^r(\mathbb{R}^{2})$ for any $1\leq r<+\infty$. Therefore, for any $R>0$, by Fatou's lemma, we have
\begin{eqnarray}\label{2-37}
  &&\liminf_{n\rightarrow+\infty}\int_{B_R(0)}\int_{B_R(0)} \log(1+|x-y|) |u_n|^p(x)|u_n|^{p}(y) dxdy \\
  \nonumber &\geq& \int_{B_R(0)}\int_{B_R(0)}\log(1+|x-y|)|u|^{p}(x)|u|^{p}(y) dxdy.
\end{eqnarray}
Letting $R\rightarrow+\infty$, by monotone convergence theorem, we derive
\begin{equation}\label{2-33}
  \liminf_{n \to +\infty} V_1(u_n)\geq V_1(u),
\end{equation}
that is, $V_{1}$ is w.l.s.c. on $H^{1}(\mathbb{R}^{2})$.

Now we consider the energy functional
\begin{equation}\label{2-34}
  I(u)=\frac{1}{2}\|u\|_{H^1(\mathbb{R}^{2})}^2+\frac{\gamma}{4p\pi}\big(V_1(u)-V_2(u)\big)-\frac{b}{q}\|u\|_{L^q(\mathbb{R}^{2})}^q.
\end{equation}
Let sequence $\{u_n\}\subset X$ s.t. $u_n \rightharpoonup u$ in $X$ as $n\rightarrow+\infty$. By the compactness of $X \hookrightarrow \hookrightarrow L^s(\mathbb{R}^{2})$ for any $p\leq s< \infty $, one has, up to a subsequence, $u_n \to u$ in $L^{\frac{4p}{3}}(\mathbb{R}^{2})$ as $n\rightarrow+\infty$. Thus $V_2(u_n) \to V_2(u)$ and we get
\begin{equation}\label{2-35}
  \liminf_{n \to +\infty} I(u_n) \geq I(u)
\end{equation}
that is, $I$ is w.l.s.c. on $X$. Since the functional
\begin{equation}\label{2-36}
  u \rightarrow I(u)-\frac{\gamma}{4p\pi}V_1(u)=\frac{1}{2}\|u\|_{H^1(\mathbb{R}^{2})}^2-\frac{\gamma}{4p\pi}V_2(u)- \frac{b}{q}\|u\|_{L^q(\mathbb{R}^{2})}^q
\end{equation}
is continuous on $H^1(\mathbb{R}^{2})$ and $I(u)=\left(I(u)-\frac{\gamma}{4p\pi}V_1(u)\right)+\frac{\gamma}{4p\pi}V_1(u)$, we deduce that $I$ is l.s.c on $H^{1}(\mathbb{R}^{2})$. This completes our proof of Lemma \ref{lemma0}.
\end{proof}

\begin{lem}\label{lemma1}
Assume $p\geq2$, $q\geq2$ and $u\in X$ is a weak solution of the Euler-Lagrange equation \eqref{S-N}, i.e., a critical point of the energy functional $I$. Then $u$ satisfies the following properties: \\
(1)\, The potential function defined by $ w(x):=\frac{1}{2\pi}\int_{\mathbb{R}^{2}}\log(|x-y|)|u|^p(y)dy\in C^{3}(\mathbb{R}^{2})$, moreover, it satisfies
\begin{equation*}
  \Delta w=|u|^p \quad \text{in} \,\, \mathbb{R}^{2}, \qquad w(x)-\frac{1}{2\pi}\log|x|\int_{\mathbb{R}^{2}}|u|^pdx\rightarrow0, \quad \text{as} \,\, |x| \to \infty;
\end{equation*}
(2)\, $u$ decays faster than exponential functions, i.e., for some $A>0$ and $C>0$,
\begin{equation*}
  |u(x)|\leq C e^{-A|x|}, \quad \text{as} \,\, |x| \to \infty;
\end{equation*}
(3)\, $u\in W^{2,r}(\mathbb{R}^2)$ ($\forall\, 1\leq r<+\infty$) is a strong solution of the Euler-Lagrange equation \eqref{S-N}. Moreover, if $a$ is H\"{o}lder continuous, then $u\in C_{loc}^{2,\alpha}(\mathbb{R}^{2})$ for some $\alpha\in(0,1)$, and hence $u\in C^{2}(\mathbb{R}^{2})$.
\end{lem}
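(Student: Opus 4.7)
For part (1) I would first show that $w(x):=\frac{1}{2\pi}\int_{\mathbb{R}^{2}}\log|x-y||u|^{p}(y)\,dy$ is well-defined and locally bounded, by splitting $\log|x-y|=\log(1+|x-y|)-\log(1+1/|x-y|)$. The first summand is dominated by $\log 2+\log(1+|x|)+\log(1+|y|)$, which pairs integrably with $|u|^{p}$ because $u\in X$, and the second summand is handled by the Hardy--Littlewood--Sobolev bound \eqref{2-8} since $|u|^{p}\in L^{4/3}$. The identity $\Delta w=|u|^{p}$ in the distributional sense then follows from $\frac{1}{2\pi}\log|x|$ being the fundamental solution of $\Delta$ in $\mathbb{R}^{2}$. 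For the asymptotics, I would write
\[
w(x)-\frac{\log|x|}{2\pi}\int_{\mathbb{R}^{2}}|u|^{p}\,dy=\frac{1}{2\pi}\int_{\mathbb{R}^{2}}\log\frac{|x-y|}{|x|}|u|^{p}(y)\,dy
\]
and split the $y$-integral at $|y|\leq|x|/2$ (where $|\log(|x-y|/|x|)|\leq\log 2$ so dominated convergence applies) and $|y|>|x|/2$ (where $\int_{|y|>|x|/2}\log(1+|y|)|u|^{p}\,dy\to 0$ as $|x|\to\infty$ because $u\in X$, absorbing the logarithmic singularity of $\log|x-y|$).

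Next I would establish the interior regularity of (3) and use it to promote $w$ to $C^{3}$. By Lemma \ref{lemma0}, $u\in L^{r}(\mathbb{R}^{2})$ for every $r\in[2,\infty)$, and $w\in L^{\infty}_{loc}$ from the previous step. Rewriting \eqref{S-N} as $-\Delta u+a(x)u=g$ with $g:=b|u|^{q-2}u-\frac{\gamma}{2\pi}w|u|^{p-2}u$, the right-hand side lies in $L^{r}_{loc}$ for every $r$. Standard interior $L^{r}$ estimates for $-\Delta+a(x)$ with $a\in L^{\infty}$ yield $u\in W^{2,r}_{loc}(\mathbb{R}^{2})$ for all $r<\infty$, and by Morrey's embedding $u\in C^{1,\alpha}_{loc}$ for any $\alpha\in(0,1)$. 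Since $p\geq 2$ the chain rule gives $|u|^{p}\in C^{1,\alpha}_{loc}$, so Schauder theory applied to $\Delta w=|u|^{p}$ produces $w\in C^{3,\alpha}_{loc}(\mathbb{R}^{2})\subset C^{3}(\mathbb{R}^{2})$, finishing (1). If moreover $a$ is H\"older continuous, then $g$ is H\"older continuous and Schauder estimates give $u\in C^{2,\alpha}_{loc}\subset C^{2}(\mathbb{R}^{2})$.

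For (2), I note that the preceding bootstrap together with $u\in L^{r}$ yields $u\in C^{0}(\mathbb{R}^{2})$ with $u(x)\to 0$ as $|x|\to\infty$. Rewriting \eqref{S-N} as $-\Delta u+V(x)u=0$ with $V(x):=a(x)+\gamma w(x)|u(x)|^{p-2}-b|u(x)|^{q-2}$, and combining $w(x)\to+\infty$ (from (1)) with $u(x)\to 0$, there exists $R_{0}>0$ such that on $\{|x|\geq R_{0}\}$ both $w\geq 0$ and $b|u|^{q-2}\leq \tfrac{1}{2}\inf_{\mathbb{R}^{2}}a$ hold; consequently $V\geq\tfrac{1}{2}\inf_{\mathbb{R}^{2}}a=:A^{2}>0$ there. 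Fixing $A_{0}\in(0,A)$ and computing $\Delta(Me^{-A_{0}|x|})=(A_{0}^{2}-A_{0}/|x|)Me^{-A_{0}|x|}$, one checks that $\phi(x):=Me^{-A_{0}|x|}$ is a supersolution of $-\Delta\phi+V\phi\geq 0$ on $\{|x|\geq R_{0}\}$ for $R_{0}$ possibly enlarged. Choosing $M$ so that $\phi\geq|u|$ on $\partial B_{R_{0}}$, the Kato inequality $-\Delta|u|+V|u|\leq 0$ together with the maximum principle on the exterior domain gives $|u(x)|\leq Me^{-A_{0}|x|}$, which is the claim. Having exponential decay in hand, $g\in L^{r}(\mathbb{R}^{2})$ for every $r\geq 1$ (since $w$ grows only logarithmically), so the global $u\in W^{2,r}(\mathbb{R}^{2})$ in (3) follows.

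The principal obstacle is the sign-indefinite, unbounded nature of the logarithmic potential $w$: it may be very negative on the support of $u$ and only tends to $+\infty$ at spatial infinity, so positivity of $w$ cannot be exploited globally. The decay argument therefore has to wait until the regularity bootstrap forces $u(x)\to 0$, after which the constant floor $\inf_{\mathbb{R}^{2}}a>0$ and the vanishing of $b|u|^{q-2}$ conspire to make $V$ uniformly positive outside a large ball. The decomposition $\log|x-y|=\log(1+|x-y|)-\log(1+1/|x-y|)$, which cleanly separates the growing tail from the integrable local singularity, is the unifying technical device across all three parts.
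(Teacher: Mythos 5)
Your proof follows a route parallel to the paper's for parts (1) and (3): split the logarithmic kernel, control the growing part by $\log(1+|x|)+\log(1+|y|)$ using $u\in X$ and the singular part by Hardy--Littlewood--Sobolev, then bootstrap elliptic regularity. Your asymptotics argument (splitting the $y$-integral at $|y|=|x|/2$) is cosmetically different from the paper's (which splits at $|y-x|=|x|/2$) but equivalent in substance. The substantive departure is in part (2): the paper invokes Agmon's theory as a black box, whereas you build an explicit exponential supersolution and compare via Kato's inequality and the maximum principle on an exterior domain. This is a legitimate, more self-contained alternative; it also forces the order asymptotics $\to$ local regularity $\to$ decay $\to$ global $W^{2,r}$, which is fine.

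There is, however, a genuine gap in the step ``the preceding bootstrap together with $u\in L^r$ yields $u\in C^0(\mathbb{R}^2)$ with $u(x)\to 0$ as $|x|\to\infty$.'' The local estimate $\|u\|_{W^{2,r}(B_1(x_0))}\le C\bigl(\|g\|_{L^r(B_2(x_0))}+\|u\|_{L^r(B_2(x_0))}\bigr)$ involves, through $g$, the term $\frac{\gamma}{2\pi}\,w\,|u|^{p-2}u$, and $\|w\|_{L^\infty(B_2(x_0))}\sim\log|x_0|$. From $u\in X$ one only gets $\log|x_0|\,\|u\|_{L^p(B_2(x_0))}^{p}\to 0$, which for the exponent $r=p/(p-1)$ gives $\log|x_0|\,\|u\|_{L^{r(p-1)}(B_2(x_0))}^{p-1}=o\bigl((\log|x_0|)^{1/p}\bigr)$, not $o(1)$, and there is no quantitative decay of the higher $L^s(B_2(x_0))$-norms to interpolate against. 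So the vanishing of $u$ at infinity does not follow merely from local $W^{2,r}$-bootstrap plus $u\in\bigcap_{r\ge 2}L^r$. The fix is to exploit the sign of $w$ at infinity already at this stage: fix $R_0$ with $w\ge 0$ on $\{|x|\ge R_0\}$; by Kato's inequality, $v:=|u|$ satisfies $-\Delta v\le b\,v^{q-1}-a_{*}v\le b\,v^{q-1}$ on $\{|x|>R_0\}$ with bounded coefficients, so the local boundedness estimate for subsolutions (e.g.\ Theorem 8.17 in \cite{GT}) gives $\sup_{B_1(x_0)}|u|\le C\bigl(\|u\|_{L^2(B_2(x_0))}+b\,\|u\|_{L^{2(q-1)}(B_2(x_0))}^{q-1}\bigr)\to 0$ with $C$ \emph{universal}, since both $L^2$ and $L^{2(q-1)}$ norms of $u$ over far-away balls vanish. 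With $u(x)\to 0$ secured in this way, your supersolution comparison goes through and the remainder of the argument is sound. (A very minor point: for $2<p<3$ the chain rule gives only $|u|^p\in C^{1,\beta}_{loc}$ with $\beta\le p-2$, but this is still enough for $w\in C^3$.)
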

\begin{proof}
\emph{Proof of (1):} We first show that the potential function $w(x):=\frac{1}{2\pi}\int_{\mathbb{R}^{2}}\log(|x-y|)|u|^p(y)dy\in L_{loc}^{\infty} $, and hence it is well-defined.

For each $R>1$ and $x \in B_R(0)$,
\begin{align}\label{2-38}
2\pi|w(x)| &=\left|\int_{\mathbb{R}^{2}} \log(|x-y|)|u|^p(y)dy \right|\\
      \nonumber &\leq \int_{\mathbb{R}^{2}} \left|\log(|x-y|)\right| |u|^{p}(y)dy \\
      \nonumber &\leq \int_{|y-x|<2R}\left|\log(|x-y|)\right| |u|^p(y)dy+\int_{|y-x| \geq 2R}\left|\log|x-y|\right| |u|^p(y)dy.
\end{align}
The first term in \eqref{2-38} is a convolution form with the kernel $\left|\log|x|\right|\chi_{B_{2R}(0)}\in L^r(\mathbb{R}^{2})$ for every $1\leq r <\infty $, so
\begin{equation}\label{2-39}
  \int_{|y-x|<2R}\left|\log(|x-y|)\right| |u|^p(y)dy\leq C_{R}\|u\|_{L^{2p}(\mathbb{R}^{2})}^{p}\leq C_R\|u\|_{H^1(\mathbb{R}^{2})}^{p}.
\end{equation}
As to the second term in \eqref{2-38}, we have
\begin{align}\label{2-40}
\int_{|y-x| \geq 2R}\left|\log(|x-y|)\right| |u|^p(y)dy &\leq \int_{|y-x| \geq 2R}\left| \log(R+|y|) \right| |u|^p(y)dy \\
\nonumber &\leq 2\int_{|y-x| \geq 2R}\log(1+|y|)|u|^p(y)dy\\
\nonumber & \leq 2\|u\|_{*}^{p}.
\end{align}
Thus one has $|w(x)|\leq C_R(\|u\|_{H^1(\mathbb{R}^{2})}^p +\|u\|_{*}^{p})$ and $w$ is well-defined.

By the definition of the function $w$, we have, for any $|x|>2$,
\begin{align}\label{2-41}
&2\pi w(x)-\log|x|\int_{\mathbb{R}^{2}}|u|^{p}dx=\int_{\mathbb{R}^{2}} \left(\log(|x-y|)-\log|x|\right)|u|^{p}(y)dy \\
   \nonumber =:&\int_{|y-x|\geq\frac{|x|}{2}}\log\left(\frac{|x-y|}{|x|}\right)|u|^{p}(y)dy+\int_{|y-x|<\frac{|x|}{2}}\left(\log(|x-y|)-\log|x|\right)|u|^{p}(y)dy.
\end{align}
Note that for any $|y-x|\geq\frac{|x|}{2}$, one has
\begin{equation}\label{2-42}
  \log\frac{1}{2}\leq\log\left(\frac{|x-y|}{|x|}\right)\leq\log\left(1+\frac{|y|}{|x|}\right)\leq\log\left(1+|y|\right).
\end{equation}
Since $u\in X$, we have $|u|^{p}\in L^{1}(\mathbb{R}^{2})$ and $\log(1+|x|)|u|^{p}\in L^{1}(\mathbb{R}^{2})$, thus it follows from Lebesgue's dominated convergence theorem that
\begin{equation}\label{2-43}
  \int_{|y-x|\geq\frac{|x|}{2}}\log\left(\frac{|x-y|}{|x|}\right)|u|^{p}(y)dy\rightarrow0, \quad \text{as} \,\,\, |x|\rightarrow+\infty.
\end{equation}
Since one has
\begin{eqnarray}\label{2-45}
 && \int_{|y-x|<1}\left|\log(|x-y|)\right||u|^{p}(y)dy \\
 \nonumber &\leq&\left(\int_{|y-x|<1}\left|\log(|x-y|)\right|^{2}dy\right)^{\frac{1}{2}}
  \left(\int_{|y-x|<1}|u|^{2p}(y)dy\right)^{\frac{1}{2}} \\
 \nonumber &\leq& C\left(\int_{|y-x|<1}|u|^{2p}(y)dy\right)^{\frac{1}{2}}\rightarrow0, \qquad \text{as} \,\,\, |x|\rightarrow+\infty,
\end{eqnarray}
and
\begin{eqnarray}\label{2-46}
 && \int_{1\leq|y-x|<\frac{|x|}{2}}\log(|x-y|)|u|^{p}(y)dy+\int_{|y-x|<\frac{|x|}{2}}\log|x||u|^{p}(y)dy \\
 \nonumber &\leq& \int_{1\leq|y-x|<\frac{|x|}{2}}\log(|y|)|u|^{p}(y)dy+\int_{|y-x|<\frac{|x|}{2}}\log\left(2|y|\right)|u|^{p}(y)dy \\
 \nonumber &\leq& \int_{|y|>\frac{|x|}{2}}\left(2\log\left(1+|y|\right)+\log2\right)|u|^{p}(y)dy\rightarrow0, \qquad \text{as} \,\,\, |x|\rightarrow+\infty,
\end{eqnarray}
thus we arrive at
\begin{eqnarray}\label{2-44}
  &&\left|\int_{|y-x|<\frac{|x|}{2}}\left(\log(|x-y|)-\log|x|\right)|u|^{p}(y)dy\right|\leq\int_{|y-x|<1}\left|\log(|x-y|)\right||u|^{p}(y)dy \\
 \nonumber && +\int_{1\leq|y-x|<\frac{|x|}{2}}\log(|x-y|)|u|^{p}(y)dy+\int_{|y-x|<\frac{|x|}{2}}\log|x||u|^{p}(y)dy\rightarrow0, \qquad \text{as} \,\,\, |x|\rightarrow+\infty.
\end{eqnarray}
Combining \eqref{2-41} and \eqref{2-44} yields that
\begin{equation}\label{2-47}
  w(x)-\frac{1}{2\pi}\log|x|\int_{\mathbb{R}^{2}}|u|^{p}dx\rightarrow0, \qquad \text{as} \,\,\, |x|\rightarrow+\infty,
\end{equation}
and hence the asymptotic property in (1) has been established.

Then, by the Agmon's theorem (see \cite{A}), we know $u$ decays faster than exponential functions, that is, asymptotic property (2) holds. Thus elliptic regularity theory implies that $u\in W^{2,r}(\mathbb{R}^{2})$ for any $r\in[1,+\infty)$, and hence $u$ is a strong solution of \eqref{S-N}. Moreover, from Sobolev embeddings,  we can infer that $u\in C_{loc}^{1,\beta}(\mathbb{R}^{2})$ for any $0\leq\beta<1$. As a consequence, $w\in C_{loc}^{3,\beta}(\mathbb{R}^{2})$ and hence $w\in C^{3}(\mathbb{R}^{2})$ and satisfies $\Delta w=|u|^{p}$ in $\mathbb{R}^{2}$. This proves property (1).

Finally, if $a$ is H\"{o}lder continuous, then $u$ satisfies an equation of the form $-\Delta u=f$ with (locally) H\"{o}lder continuous $f$, thus $u\in C^{2,\alpha}_{loc}(\mathbb{R}^{2})$ for some $\alpha\in(0,1)$ and $u\in C^{2}(\mathbb{R}^{2})$. This proves property (3) and concludes our proof of Lemma \ref{lemma1}.
\end{proof}

\begin{lem}[Pohozaev type identity]\label{Pohozaev}
Assume $u\in X$ is a weak solution to \eqref{S-N} with $a\in C^{2}(\mathbb{R}^{2})$, then the following identity holds:
\begin{eqnarray}\label{pohozaev}
&&P(u):=\frac{\gamma}{4\pi p}\left(\int_{\mathbb{R}^{2}}|u|^{p}dx\right)^{2}+\frac{\gamma}{\pi p}\int_{\mathbb{R}^{2}}\int_{\mathbb{R}^{2}}
\log(|x-y|)|u|^{p}(x)|u|^{p}(y)dxdy \\
\nonumber &&\qquad\quad\,\,\,\, -\frac{2b}{q}\int_{\mathbb{R}^{2}}|u|^{q}dx+\int_{\mathbb{R}^{2}}a(x)|u|^{2}dx \\
\nonumber && \qquad\,\,\,\, =\frac{\gamma}{4\pi p}\|u\|_{L^{p}(\mathbb{R}^{2})}^{2p}+\frac{\gamma}{\pi p}V_{0}(u)-\frac{2b}{q}\|u\|_{L^{q}(\mathbb{R}^{2})}^{q}+\int_{\mathbb{R}^{2}}a(x)|u|^{2}dx=0.
\end{eqnarray}
Consequently, one has
\begin{equation}\label{J}
  J_{k}(u)=k\left\langle I^{\prime}(u), u\right\rangle-P(u)=0, \qquad k=1,2,
\end{equation}
where the auxiliary functionals $J_{k}$ are defined by \eqref{af}.
\end{lem}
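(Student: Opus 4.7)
My approach is to use $x\cdot\nabla u$ as a Pohozaev multiplier: multiply equation \eqref{S-N} pointwise by $x\cdot\nabla u$, integrate over $B_{R}(0)$, integrate by parts term by term, and pass to the limit $R\to\infty$. By Lemma \ref{lemma1}, $u\in W^{2,r}(\mathbb{R}^{2})$ for every $r\geq 1$ and $u$ decays faster than any exponential; standard elliptic bootstrapping transfers comparable decay to $\nabla u$ and $\nabla^{2}u$, so every boundary contribution on $\partial B_{R}$ vanishes in the limit and every integrand below is globally integrable.

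For the linear and $L^{q}$ parts the calculation is classical. In dimension two the pointwise identity
\begin{equation*}
(\Delta u)(x\cdot\nabla u)=\nabla\!\cdot\!\Bigl((x\cdot\nabla u)\nabla u-\tfrac{1}{2}x|\nabla u|^{2}\Bigr)+\tfrac{d-2}{2}|\nabla u|^{2}
\end{equation*}
gives $\int_{\mathbb{R}^{2}}(-\Delta u)(x\cdot\nabla u)\,dx=0$; writing $au(x\cdot\nabla u)=\tfrac{a}{2}x\cdot\nabla u^{2}$ and $|u|^{q-2}u(x\cdot\nabla u)=\tfrac{1}{q}x\cdot\nabla|u|^{q}$ and integrating by parts produces $-a\int u^{2}\,dx$ and $-\tfrac{2b}{q}\int|u|^{q}\,dx$ respectively (the $a$-term uses that $a$ is constant, which is the relevant setting of \eqref{af}; for non-constant $a\in C^{2}$ one would pick up an extra term involving $x\cdot\nabla a$).

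The delicate step is the logarithmic convolution term. I will use $|u|^{p-2}u(x\cdot\nabla u)=\tfrac{1}{p}x\cdot\nabla|u|^{p}$, apply Fubini, and integrate by parts in the variable $x$ against $\nabla_{x}\!\cdot\!(x\log|x-y|)=2\log|x-y|+\tfrac{x\cdot(x-y)}{|x-y|^{2}}$, converting the convolution contribution into
\begin{equation*}
-\frac{\gamma}{\pi p}V_{0}(u)\;-\;\frac{\gamma}{2\pi p}\iint_{\mathbb{R}^{2}\times\mathbb{R}^{2}}\frac{x\cdot(x-y)}{|x-y|^{2}}\,|u|^{p}(x)|u|^{p}(y)\,dx\,dy.
\end{equation*}
To handle the second double integral I symmetrize via $x\leftrightarrow y$: averaging the integrand with its swap yields $\tfrac{1}{2}\tfrac{(x-y)\cdot(x-y)}{|x-y|^{2}}|u|^{p}(x)|u|^{p}(y)=\tfrac{1}{2}|u|^{p}(x)|u|^{p}(y)$, so this piece collapses to $-\tfrac{\gamma}{4\pi p}\|u\|_{L^{p}}^{2p}$. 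Assembling all contributions and rearranging delivers exactly $P(u)=0$.

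The identity $J_{k}(u)=k\langle I'(u),u\rangle-P(u)=0$ is then purely algebraic: since $u$ is a critical point, $\langle I'(u),u\rangle=\int(|\nabla u|^{2}+au^{2})\,dx+\tfrac{\gamma}{2\pi}V_{0}(u)-b\|u\|_{L^{q}}^{q}=0$, and a direct comparison with \eqref{af} confirms that the stated linear combination yields $J_{k}(u)$ for both $k=1$ and $k=2$. The main obstacle is the convolution computation: justifying Fubini on the signed logarithmic kernel, the interior integration by parts in $x$, and the $x\leftrightarrow y$ symmetrization all require controlling the behaviour near the singularity $x=y$ and at infinity, but both are tamed by the exponential decay furnished by Lemma \ref{lemma1}(2) together with the elliptic regularity giving $u,\nabla u,\nabla^{2}u\in L^{r}$ for all $r\geq 1$.
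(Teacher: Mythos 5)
Your argument is correct and follows essentially the same strategy as the paper: multiply by $x\cdot\nabla u$, integrate over $B_R(0)$, use divergence-form identities term by term, kill the boundary contributions via exponential decay, and evaluate the convolution piece by the $x\leftrightarrow y$ symmetrization $\frac{x\cdot(x-y)}{|x-y|^2}\mapsto\frac{1}{2}$; the only cosmetic difference is that you integrate by parts directly against $\log|x-y|$ after Fubini, whereas the paper introduces $w$ and works with $|u|^p\,x\cdot\nabla w$ to reach the identical double integral. Your remark that a non-constant $a$ would produce an extra $\tfrac12\int(x\cdot\nabla a)u^2\,dx$ is well taken --- the paper's step $D_i(G(u))x^i=G'(u)x\cdot\nabla u$ also implicitly treats $a$ as constant, which is in any case the only setting in which the lemma is subsequently invoked.
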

\begin{proof}
From Lemma \ref{lemma1}, we know that $u\in C^{2}(\mathbb{R}^{2})$ and $u$ decays faster than exponential functions, i.e., for some $A>0$ and $C>0$,
\begin{equation}\label{p-1}
  |u(x)|\leq C e^{-A|x|}, \quad \text{as} \,\, |x| \to+\infty.
\end{equation}
Moreover, the potential function $w(x):=\frac{1}{2\pi}\int_{\mathbb{R}^{2}}\log\left(|x-y|\right)|u|^{p}(y)dy\in C^{3}(\mathbb{R}^{2})$ and satisfies
\begin{equation}\label{p-2}
  \|\nabla w\|_{L^{\infty}(\mathbb{R}^{2})}\leq C, \qquad w(x)-\frac{1}{2\pi}\log|x|\int_{\mathbb{R}^{2}}|u|^pdx\rightarrow0, \quad \text{as} \,\, |x| \to +\infty.
\end{equation}

Now we define the following two functions on $\mathbb{R}^{2}$:
$$g(u):=b|u|^{q-2}u-a(x)u\in C^{2}(\mathbb{R}^{2}), \quad G(u):=\int_{0}^{u}g(s)ds=\frac{b}{q}|u|^{q}-\frac{a(x)}{2}u^{2}\in C^{2}(\mathbb{R}^{2}),$$
which also decay exponentially as $|x|\rightarrow+\infty$. For any $R>0$, we will multiply the equation \eqref{S-N} by $x\cdot\nabla u$ and integrate by parts on $B_{R}(0)$. Using the following formula (see e.g. Page 136 in \cite{W})
\begin{equation}\label{p-3}
\Delta u(x \cdot \nabla u)=\nabla \cdot\left(\nabla u(x \cdot \nabla u)-x \frac{|D u|^{2}}{2}\right),
\end{equation}
we get
\begin{equation}\label{p-4}\begin{aligned}
\int_{B_{R}(0)}-\Delta u(x \cdot \nabla u) d x &=\int_{B_{R}(0)}-\nabla \cdot\left(\nabla u(x \cdot \nabla u)-x \frac{|D u|^{2}}{2}\right)dx \\
&=\int_{\partial B_{R}(0)} \frac{R}{2}|\nabla u|^{2}d\sigma-\int_{\partial B_{R}(0)} \frac{1}{R}|x \cdot \nabla u|^{2}d\sigma.
\end{aligned}\end{equation}
By direct calculations, one has
\begin{equation}\label{p-5}\begin{aligned}
g(u)(x \cdot \nabla u) &=G^{\prime}(u) x^{i} D_{i} u=D_{i}(G(u)) x^{j} \\
&=x \cdot \nabla(G(u))=\nabla \cdot(x G(u))-G(u) \nabla \cdot x \\
&=\nabla \cdot(x G(u))-2 G(u),
\end{aligned}\end{equation}
and hence the divergence theorem implies
\begin{eqnarray}\label{p-6}
\int_{B_{R}(0)} g(u)(x \cdot \nabla u)dx&=&\int_{B_{R}(0)}\left(\nabla \cdot(x G(u))-2 G(u)\right)dx \\
\nonumber &=&\int_{\partial B_{R}(0)} R G(u) d\sigma-\int_{B_{R}(0)} 2 G(u)dx.
\end{eqnarray}
Moreover, note that
\begin{equation}\label{p-7}
w|u|^{p-2} u(x \cdot \nabla u)=\frac{1}{p}\left[\nabla\cdot\left(w|u|^{p} x\right)-|u|^{p} x \cdot \nabla w-2 w|u|^{p}\right],
\end{equation}
thus we have
\begin{equation}\label{p-8}\begin{aligned}
&\quad \int_{B_{R}(0)} w|u|^{p-2} u(x \cdot \nabla u)dx=\frac{1}{p} \int_{B_{R}(0)}\left[\nabla\cdot\left(w|u|^{p} x\right)-|u|^{p} x \cdot \nabla w-2 w|u|^{p}\right]dx \\
&=\frac{1}{p}\int_{\partial B_{R}(0)} w|u|^{p} R d\sigma-\frac{1}{p} \int_{B_{R}(0)}|u|^{p} x \cdot \nabla wdx-\frac{2}{p} \int_{B_{R}(0)} w|u|^{p}dx.
\end{aligned}\end{equation}

Therefore, multiplying the equation \eqref{S-N} by $x\cdot\nabla u$ and integrating by parts on $B_{R}(0)$, it follows from \eqref{p-4}, \eqref{p-6} and \eqref{p-8} that
\begin{equation}\label{p-9}\begin{aligned}
0 &=\int_{B_{R}(0)}\left[-\Delta u-g(u)+\gamma w|u|^{p-2} u\right](x\cdot\nabla u)dx \\
&=\int_{\partial B_{R}(0)} \frac{R}{2}|\nabla u|^{2} d\sigma-\int_{\partial B_{R}(0)} \frac{1}{R}|x \cdot \nabla u|^{2} d\sigma \\
&\quad -\int_{\partial B_{R}(0)} R G(u) d\sigma+\int_{B_{R}(0)} 2 G(u)dx \\
&\quad +\frac{\gamma}{p} \int_{\partial B_{R}(0)} R w|u|^{p}d\sigma-\frac{\gamma}{p} \int_{B_{R}(0)}|u|^{p} x \cdot \nabla w dx-\frac{2\gamma}{p} \int_{B_{R}(0)} w|u|^{p}dx.
\end{aligned}\end{equation}
As a consequence, we arrive at
\begin{eqnarray}\label{p-10}
&&\int_{B_{R}(0)}\left[\frac{\gamma}{p}|u|^{p} x \cdot \nabla w+\frac{2\gamma}{p} w|u|^{p}-2 G(u)\right]dx \\
\nonumber &=&\int_{\partial B_{R}(0)}\left[\frac{R}{2}|\nabla u|^{2}-\frac{1}{R}|x \cdot \nabla u|^{2}-R G(u)+\frac{\gamma}{p} R w|u|^{p}\right]d\sigma.
\end{eqnarray}

Next, following the idea in \cite{BL}, we will show that the boundary terms in \eqref{p-10} converges to zero along a sequence $R_{k}\rightarrow+\infty$ as $k\rightarrow+\infty$, that is,
\begin{equation}\label{p-11}
  \lim_{k\rightarrow+\infty}R_{k}\int_{\partial B_{R_{k}}(0)}f(x)dx=0,
\end{equation}
where the function
\begin{equation}\label{p-12}
  f(x):=\frac{1}{2}|\nabla u|^{2}+\frac{\gamma}{p}w|u|^{p}-\frac{|x \cdot \nabla u|^{2}}{|x|^{2}}-G(u).
\end{equation}
To this end, it is suffices for us to show that $f\in L^{1}(\mathbb{R}^{2})$. Indeed, from the standard elliptic regularity theory and the exponential decay of $u$, one can deduce that $\nabla u$ also decays exponentially. Consequently, it follows immediately from \eqref{p-1} and \eqref{p-2} that $f\in L^{1}(\mathbb{R}^{2})$, and hence \eqref{p-11} holds.

Furthermore, \eqref{p-1} and \eqref{p-2} also implies that $w|u|^{p}\in L^{1}(\mathbb{R}^{2})$, $G(u)\in L^{1}(\mathbb{R}^{2})$ and $|u|^{p} x \cdot \nabla w\in L^{1}(\mathbb{R}^{2})$, and
\begin{equation}\label{p-13}\begin{aligned}
\int_{\mathbb{R}^{2}}|u|^{p} x \cdot \nabla w dx&=\frac{1}{2\pi}\int_{\mathbb{R}^{2}}\int_{\mathbb{R}^{2}}\frac{|x|^{2}-x \cdot y}{|x-y|^{2}}|u|^{p}(x)|u|^{p}(y) d x d y \\
&=\frac{1}{4\pi}\left(\int_{\mathbb{R}^{2}}|u|^{p}dx\right)^{2}.
\end{aligned}\end{equation}

Taking $R=R_{k}$ in \eqref{p-10} and letting $k\rightarrow+\infty$, \eqref{p-11} and \eqref{p-13} yields that
\begin{eqnarray}\label{p-14}
% \nonumber to remove numbering (before each equation)
  && P(u)=\int_{\mathbb{R}^{2}}\left[\frac{\gamma}{p}|u|^{p} x \cdot \nabla w+\frac{2\gamma}{p} w|u|^{p}-2 G(u)\right]dx \\
 \nonumber &&\qquad\,\, =\lim_{k\rightarrow+\infty}\int_{B_{R_{k}}(0)}\left[\frac{\gamma}{p}|u|^{p} x \cdot \nabla w+\frac{2\gamma}{p} w|u|^{p}-2 G(u)\right]dx \\
 \nonumber &&\qquad\,\, =\lim_{k\rightarrow+\infty}R_{k}\int_{\partial B_{R_{k}}(0)}f(x)dx=0.
\end{eqnarray}
Therefore, for every weak solution $u\in X$ to \eqref{S-N}, the auxiliary functionals
\begin{equation}\label{p-15}
  J_{k}(u)=k\left\langle I^{\prime}(u), u\right\rangle-P(u)=0, \qquad k=1,2.
\end{equation}
This finishes our proof of Lemma \ref{Pohozaev}.
\end{proof}

\begin{lem}\label{lemma2}
Assume $p\geq2$ and $q>2$. There exists a $\alpha>0 $ such that, for all $0<\beta\leq\alpha$,
\begin{equation}\label{2-49}
  \inf_{\|u\|_{H^1}=\beta}I(u)>0, \qquad  \inf_{\|u\|_{H^1}=\beta}\left\langle I'(u),u \right\rangle>0.
\end{equation}
\end{lem}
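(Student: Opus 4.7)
The plan is to exploit the decomposition $V_0(u)=V_1(u)-V_2(u)$ given in (2-7), together with the trivial sign information $V_1(u)\ge 0$ (the kernel $\log(1+|x-y|)$ is nonnegative) and the Hardy--Littlewood--Sobolev bound $|V_2(u)|\le C\|u\|_{L^{4p/3}}^{2p}$ from (2-12). The nuisance in the problem is precisely that the logarithmic kernel is sign-changing, so $V_0$ need not be bounded below by $0$; however, whatever negativity appears can be fully absorbed into the $V_2$ term, which is controlled by an $L^{4p/3}$ norm.

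More concretely, I would first rewrite
\begin{equation*}
I(u)=\frac{1}{2}\|u\|_{H^1}^2+\frac{\gamma}{4p\pi}V_1(u)-\frac{\gamma}{4p\pi}V_2(u)-\frac{b}{q}\|u\|_{L^q}^q,
\end{equation*}
and discard the nonnegative term $\frac{\gamma}{4p\pi}V_1(u)$ to obtain
\begin{equation*}
I(u)\ge \frac{1}{2}\|u\|_{H^1}^2-\frac{\gamma}{4p\pi}|V_2(u)|-\frac{b}{q}\|u\|_{L^q}^q.
\end{equation*}
Applying (2-12) and the two-dimensional Sobolev embedding $H^1(\mathbb{R}^2)\hookrightarrow L^s(\mathbb{R}^2)$ for every $s\in[2,\infty)$ (which covers both $s=\tfrac{4p}{3}\ge \tfrac{8}{3}$ and $s=q>2$), there exist constants $C_1,C_2>0$ depending only on $\gamma,p,q,b$ and the embedding constants such that
\begin{equation*}
I(u)\ge \frac{1}{2}\|u\|_{H^1}^2-C_1\|u\|_{H^1}^{2p}-C_2\|u\|_{H^1}^q.
\end{equation*}
Since $2p\ge 4>2$ and $q>2$, the right-hand side, viewed as a function of $\beta=\|u\|_{H^1}$, is strictly positive for all sufficiently small $\beta>0$; this yields some $\alpha>0$ such that $\inf_{\|u\|_{H^1}=\beta}I(u)>0$ for all $0<\beta\le\alpha$.

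The argument for $\langle I'(u),u\rangle$ is completely parallel. By Lemma \ref{lemma0}(2),
\begin{equation*}
\langle I'(u),u\rangle=\|u\|_{H^1}^2+\frac{\gamma}{2\pi}V_0(u)-b\|u\|_{L^q}^q\ge \|u\|_{H^1}^2-\frac{\gamma}{2\pi}|V_2(u)|-b\|u\|_{L^q}^q,
\end{equation*}
again using $V_1(u)\ge 0$. The same Hardy--Littlewood--Sobolev plus Sobolev embedding estimates give
\begin{equation*}
\langle I'(u),u\rangle \ge \|u\|_{H^1}^2-C_3\|u\|_{H^1}^{2p}-C_4\|u\|_{H^1}^q,
\end{equation*}
and choosing $\alpha>0$ possibly smaller we obtain $\inf_{\|u\|_{H^1}=\beta}\langle I'(u),u\rangle>0$ for $0<\beta\le\alpha$. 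The only subtle point — and the only place where the planar nature of the problem enters — is the decomposition step: the $\log(|x-y|)$ kernel is sign-indefinite, but by separating it into a nonnegative part that we throw away and an HLS-controllable part we never see the sign-change causing trouble near the origin of $X$.
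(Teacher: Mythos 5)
Your proof is correct and follows essentially the same route as the paper: both drop the nonnegative term $V_1(u)\geq 0$ from $V_0=V_1-V_2$, control $V_2$ via the Hardy--Littlewood--Sobolev bound \eqref{2-12} and the Sobolev embedding $H^1(\mathbb{R}^2)\hookrightarrow L^s(\mathbb{R}^2)$, and conclude by factoring out $\|u\|_{H^1}^2$. Your concluding remark about the sign-indefinite logarithmic kernel correctly identifies why this decomposition is the key step.
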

\begin{proof}
From the definition of the energy functional $I$, \eqref{2-12} and Sobolev embeddings, we get
\begin{align}\label{2-48}
I(u)&=\frac{1}{2}\|u\|_{H^1}^2 + \frac{\gamma}{4p\pi}V_0(u) - \frac{b}{q}\|u\|_{L^q}^q\\
\nonumber &\geq \frac{1}{2}\|u\|_{H^1}^2 - \frac{\gamma}{4p\pi}V_2(u) - \frac{b}{q}\|u\|_{L^q}^q\\
\nonumber &\geq \frac{1}{2}\|u\|_{H^1}^2 -C_1 \|u\|_{\frac{4p}{3}}^{2p} - C_{2}\|u\|_{H^1}^q\\
\nonumber &\geq \|u\|_{H^1}^2\left(\frac{1}{2}-C_1\|u\|_{H^1}^{2p-2} - C_{2}\|u\|_{H^1}^{q-2}\right).
\end{align}
Since $p\geq 2$ and $q>2$, we derive that if $\alpha$ is small enough, then for any $0<\beta\leq\alpha$, the first inequality in \eqref{2-49} holds. By direct calculations, we also have
\begin{align}\label{2-50}
\left\langle I'(u),u \right\rangle&=\|u\|_{H^1}^2 + \frac{\gamma}{2\pi}V_0(u)-b\|u\|_{L^q}^q\\
\nonumber &\geq \|u\|_{H^1}^2-\frac{\gamma}{2\pi}V_2(u)-b\|u\|_{L^q}^q\\
\nonumber &\geq \|u\|_{H^1}^2\left(1-C_3\|u\|_{H^1}^{2p-2}-C_{4}\|u\|_{H^1}^{q-2}\right),
\end{align}
which implies that if $\alpha$ is small enough, then for any $0<\beta\leq\alpha$, the second inequality in \eqref{2-49} holds. This finishes our proof of Lemma \ref{lemma2}.
\end{proof}

\begin{lem}\label{lemma11}
Suppose $a(x)=a>0$, $p\geq2$, $q\geq2$. Let $k=1$ \emph{if and only if} $p\geq3$ and $k=2$ \emph{if and only if} $2\leq p<3$. Let $u\in X\setminus\{0\}$ and $u_{t,k}\in X\setminus\{0\}$ be defined by $u_{t,k}(x):=t^{k}u(tx)$ for any $t>0$. Then we have
\begin{equation}\label{eq-a3}
  \lim_{t\rightarrow+\infty}I(u_{t,k})=-\infty.
\end{equation}
\end{lem}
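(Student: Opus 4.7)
The plan is a direct scaling computation: expand $I(u_{t,k})$ in powers of $t$ and identify the dominant negative contribution as $t\to+\infty$. Since $a(x)\equiv a$ is constant, every term in $I$ is homogeneous under dilations except the logarithmic bilinear form, which picks up an extra $-\log t$ factor; choosing $k$ according to $p$ will ensure that this $-\log t$ factor is attached to a high enough power of $t$ to dominate the kinetic energy.

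First, by the substitution $x'=tx$, elementary changes of variable give
\[
\|\nabla u_{t,k}\|_{L^2}^2 = t^{2k}\|\nabla u\|_{L^2}^2,\quad \|u_{t,k}\|_{L^2}^2 = t^{2k-2}\|u\|_{L^2}^2,\quad \|u_{t,k}\|_{L^r}^r = t^{kr-2}\|u\|_{L^r}^r
\]
for any $r\geq 1$. The crucial identity is the scaling of $V_0$: writing $\log|x-y| = \log(t^{-1}|tx-ty|) = \log|tx-ty| - \log t$ and substituting $(x,y)\mapsto (x/t,y/t)$ yields
\[
V_0(u_{t,k}) \;=\; t^{2kp-4}\Bigl[V_0(u) - (\log t)\,\|u\|_{L^p}^{2p}\Bigr].
\]
Assembling the pieces gives the closed form
\begin{align*}
I(u_{t,k}) ={}& \tfrac{1}{2}t^{2k}\|\nabla u\|_{L^2}^2 + \tfrac{a}{2}t^{2k-2}\|u\|_{L^2}^2 \\
& + \tfrac{\gamma}{4p\pi}\,t^{2kp-4}\Bigl[V_0(u)-(\log t)\|u\|_{L^p}^{2p}\Bigr] - \tfrac{b}{q}\,t^{kq-2}\|u\|_{L^q}^q.
\end{align*}

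The remaining task is to check that for each admissible pairing of $k$ and $p$, the term $-\tfrac{\gamma}{4p\pi}\,t^{2kp-4}(\log t)\,\|u\|_{L^p}^{2p}$ dominates every positive contribution. When $k=1$ and $p\geq 3$, the exponent $2kp-4=2p-4\geq 2=2k$ matches or exceeds the exponent of the kinetic term, so the extra $\log t$ makes $-t^{2p-4}\log t$ beat $t^{2k}$, $t^{2k-2}$, and $V_0(u)\,t^{2p-4}$ as $t\to+\infty$. When $k=2$ and $2\leq p<3$, the corresponding exponent is $2kp-4=4p-4\geq 4=2k$, and the same argument applies. Since $b\geq 0$, the $L^q$ term is non-positive and can only accelerate the divergence. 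Hence $I(u_{t,k})\to -\infty$ in both cases.

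The only real subtlety is the interplay between $k$ and $p$: the two admissible regimes are chosen precisely so that $2kp-4\geq 2k$, equivalently $p\geq 1+2/k$, which is the sharp condition ensuring that $t^{2kp-4}\log t$ grows at least as fast as $t^{2k}$. In the boundary subcases $p=3,\,k=1$ and $p=2,\,k=2$ the two exponents coincide and divergence is driven purely by the logarithmic factor, which is why choosing the other value of $k$ in the opposite regime would fail. Beyond this bookkeeping, the argument is a one-line computation in powers of $t$.
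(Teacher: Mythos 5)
Your proposal is correct and follows exactly the same route as the paper: compute the scaling of each term in $I(u_{t,k})$ under the dilation $u_{t,k}(x)=t^k u(tx)$, observe that the logarithmic potential picks up the extra $-t^{2kp-4}\log t\,\|u\|_{L^p}^{2p}$ contribution, and use $2kp-4\geq 2k$ (which holds precisely for $k=1,\,p\geq3$ and $k=2,\,2\leq p<3$) to see that this term dominates. The paper's proof is the identical computation (its equation $\eqref{63}$), merely stated without spelling out the exponent comparison you give explicitly; your only imprecision is the closing side remark that the opposite choice of $k$ "would fail" — in fact $k=2$ gives divergence for all $p\geq2$, and only $k=1$ with $p<3$ actually fails — but this does not affect the proof.
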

\begin{proof}
Let $u\in X\setminus\{0\}$. Then we have
\begin{equation}\label{63}\begin{aligned}
I(u_{t,k})=& \frac{t^{2k}}{2}\int_{\mathbb{R}^{2}}|\nabla u|^{2} \mathrm{d} x+\frac{t^{2(k-1)}}{2} \int_{\mathbb{R}^{2}}au^{2} \mathrm{d} x-\frac{t^{2(kp-2)}\gamma}{4p\pi}\log t\left(\int_{\mathbb{R}^{2}}|u|^{p} \mathrm{d} x\right)^{2} \\
&+\frac{t^{2(kp-2)}\gamma}{4p\pi}\int_{\mathbb{R}^{2}} \int_{\mathbb{R}^{2}} \log (|x-y|)|u|^{p}(x)|u|^{p}(y) \mathrm{d} x \mathrm{d} y-\frac{t^{kq-2}b}{q}\int_{\mathbb{R}^{2}}|u|^{q}\mathrm{d} x,
\end{aligned}\end{equation}
and hence $I(u_{t,k})\rightarrow-\infty$ as $t\rightarrow+\infty$. This finishes the proof of Lemma \ref{lemma11}.
\end{proof}

\begin{lem}\label{lemma3}
Assume $p\geq2$, $q\geq2p$ and $u\in X\setminus\{0\}$. Then there are only two different possibilities for the function $\varphi_{u}(t)=I(tu)$:\\
(1) \, there exists a unique $t_u\in(0,+\infty)$ such that, $\varphi'_{u}(t)>0$ on $(0,t_u)$ and $\varphi'_{u}(t)<0$ on $(t_u,+\infty)$, moreover, $\varphi_{u}(t)\rightarrow -\infty$ as $t\rightarrow+\infty$;  \\
(2) \, $\varphi'_u(t)>0$ on $(0,+\infty)$ and $\varphi_{u}(t)\rightarrow +\infty$ as $t\rightarrow+\infty$.
\end{lem}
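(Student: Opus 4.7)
The plan is to exploit the fact that $\varphi_u(t)=I(tu)$ is an explicit sum of three monomials in $t$, thanks to the scaling identities $V_0(tu)=t^{2p}V_0(u)$ and $\|tu\|_{L^r}^r=t^{r}\|u\|_{L^r}^r$:
\begin{equation*}
\varphi_u(t)=\frac{t^2}{2}\|u\|_{H^1}^2+\frac{\gamma t^{2p}}{4p\pi}V_0(u)-\frac{b t^{q}}{q}\|u\|_{L^q}^q.
\end{equation*}
Differentiating and pulling out a factor of $t$ gives $\varphi'_u(t)=t\,h(t)$ with
\begin{equation*}
h(t):=\|u\|_{H^1}^2+\frac{\gamma V_0(u)}{2\pi}\,t^{2p-2}-b\|u\|_{L^q}^q\,t^{q-2}.
\end{equation*}
Since $u\neq 0$ forces $h(0^+)=\|u\|_{H^1}^2>0$, and $\varphi'_u$ has the same sign as $h$ on $(0,\infty)$, the lemma reduces to counting the zeros of $h$.

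To avoid branching on the sign of $V_0(u)$ and the value of $b$, I rescale once more and set $\tilde h(t):=t^{2-2p}h(t)$, so that
\begin{equation*}
\tilde h(t)=\|u\|_{H^1}^2\,t^{2-2p}+\frac{\gamma V_0(u)}{2\pi}-b\|u\|_{L^q}^q\,t^{q-2p}.
\end{equation*}
The assumptions $p\geq 2$ and $q\geq 2p$ make a direct computation give
\begin{equation*}
\tilde h'(t)=-(2p-2)\|u\|_{H^1}^2\,t^{1-2p}-(q-2p)b\|u\|_{L^q}^q\,t^{q-2p-1}<0\qquad(t>0),
\end{equation*}
because the first term is strictly negative and the second is non-positive. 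Hence $\tilde h$ is strictly decreasing on $(0,\infty)$, with $\tilde h(0^+)=+\infty$ and a limit $L:=\lim_{t\to\infty}\tilde h(t)\in[-\infty,+\infty)$.

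The proof is then concluded by a dichotomy on $L$. If $L\geq 0$, strict monotonicity forces $\tilde h>0$ on $(0,\infty)$, hence $h>0$ and $\varphi'_u>0$ on all of $(0,\infty)$, which is alternative (2); inspecting the explicit formula for $\varphi_u$ shows that in every subcase where $L\geq 0$ occurs (namely $q=2p$ with $\tfrac{\gamma V_0(u)}{2\pi}\geq b\|u\|_{L^q}^q$, or $q>2p$ with $b=0$ and $V_0(u)\geq 0$) the leading coefficient in $t$ is non-negative, so the positive $\tfrac{1}{2}t^2\|u\|_{H^1}^2$-term forces $\varphi_u(t)\to+\infty$. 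If $L<0$, strict monotonicity of $\tilde h$ together with the intermediate value theorem delivers a unique $t_u\in(0,\infty)$ with $\tilde h(t_u)=0$, so $\varphi'_u>0$ on $(0,t_u)$ and $\varphi'_u<0$ on $(t_u,\infty)$; the same case-by-case look shows that $L<0$ makes the leading coefficient of $\varphi_u$ strictly negative, so $\varphi_u(t)\to-\infty$. The one slightly delicate point is the monotonicity estimate $\tilde h'<0$, which is exactly where the hypothesis $q\geq 2p$ is used; matching the sign of $L$ with the asymptotic behavior of $\varphi_u$ is then a routine inspection, cleanest in the boundary case $q=2p$ where the two highest monomials fuse into $\tfrac{t^{2p}}{2p}\,L$.
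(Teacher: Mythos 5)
Your proof is correct and follows essentially the same route as the paper: the paper simply notes that $\varphi_u'(t)/t=\|u\|_{H^1}^2+\tfrac{\gamma}{2\pi}t^{2p-2}V_0(u)-bt^{q-2}\|u\|_{L^q}^q$ and declares the rest obvious, whereas you spell out the monotonicity argument (factoring out $t^{2p-2}$ and checking $\tilde h'<0$) and the case-by-case asymptotics of $\varphi_u$. The additional normalization by $t^{2-2p}$ is a clean way to make the dichotomy transparent without branching on the sign of $V_0(u)$ or the value of $b$.
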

\begin{proof}
The proof of Lemma \ref{lemma3} is obvious. We only need to observe that
\begin{equation}\label{2-51}
  \frac{\varphi_u^{\prime}(t)}{t}=\|u\|_{H^1}^2 +\frac{\gamma}{2\pi}t^{2p-2}V_0(u) -bt^{q-2}\|u\|_{L^q}^q
\end{equation}
with $p\geq 2$, $q\geq 2p$.
\end{proof}
\begin{rem}\label{rem3}
From the proof of Lemma \ref{lemma3}, it is clear that, if $u\in X\setminus\{0\}$ such that $V_{0}(u)<0$, then
\begin{equation}\label{2-52}
  0<\sup_{\mathbb{R}u}I:=\sup_{t\in\mathbb{R}}I(tu)<+\infty.
\end{equation}
\end{rem}

\section{A compactness condition and quantitative deformation Lemma}
In the following, we assume $a\in C(\mathbb{R}^{2})$ and $\mathbb{Z}^{2}$-periodic with $\inf_{\mathbb{R}^{2}}a>0$. For any function $u:\,\mathbb{R}^{2}\rightarrow\mathbb{R}$ and $x\in\mathbb{R}^{2}$, the action of translation is denoted by
\begin{equation}\label{3-1}
  x*u:\, \mathbb{R}^2 \longrightarrow \mathbb{R}, \qquad x*u(y)=u(y-x) \quad \forall\, y\in\mathbb{R}^{2}.
\end{equation}

\subsection{Cerami compactness condition of translation invariant version}
The main result of this sub-section is the following theorem.
\begin{thm}\label{Cerami}
Assume $p\geq2$ and $q\geq2p$. Let sequence $\{u_n\}\subset X$ satisfy
\begin{equation}\label{3-2}
  I(u_n) \longrightarrow d>0, \qquad \|I'(u_n)\|_{X'}\left(1+\|u_n\|_X\right)\longrightarrow 0, \quad \text{as} \,\, n\rightarrow\infty.
\end{equation}
Then, after passing to a subsequence, there exist points $x_n\in \mathbb{Z}^2$ ($n\in\mathbb{N}$), such that
\begin{equation}\label{3-3}
  x_n*u_n \longrightarrow u \quad  \text{strongly in} \quad X, \quad \text{as} \quad  n\rightarrow\infty
\end{equation}
for some critical points $u\in X\setminus\{0\}$ of $I$.
\end{thm}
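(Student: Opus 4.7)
My plan follows the Cerami compactness framework of \cite{CW}, generalizing from $p=2$ to $p\geq 2$ by using $q\geq 2p$ to cancel the indefinite logarithmic term. The main steps are $H^1$-boundedness, Lions' non-vanishing with $\mathbb{Z}^2$-shifts, identifying the weak limit as a nontrivial critical point of $I$, and upgrading weak to strong convergence in $X$.

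\textbf{Step 1: $H^1$-boundedness and non-vanishing.} The identity
\[
I(u_n)-\frac{1}{2p}\langle I'(u_n),u_n\rangle=\frac{p-1}{2p}\|u_n\|_{H^1}^2+b\left(\frac{1}{2p}-\frac{1}{q}\right)\|u_n\|_{L^q}^q
\]
eliminates $V_0$ entirely; both right-hand coefficients are nonnegative for $p\geq 2$, $q\geq 2p$, and the left-hand side is bounded by the Cerami hypothesis since $|\langle I'(u_n),u_n\rangle|\leq\|I'(u_n)\|_{X'}\|u_n\|_X\to 0$. To rule out Lions' vanishing alternative, I argue by contradiction: if $\sup_y\int_{B_1(y)}|u_n|^2\,dx\to 0$, then Lemma \ref{vanish} forces $u_n\to 0$ in $L^s$ for every $s\in(2,\infty)$, so $\|u_n\|_{L^q}^q\to 0$ and $V_2(u_n)\to 0$ by \eqref{2-12}; combined with $V_1(u_n)\geq 0$ and $\langle I'(u_n),u_n\rangle\to 0$, this yields $\|u_n\|_{H^1}\to 0$, contradicting $d>0$. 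Hence there exist $\delta,R>0$ and $x_n\in\mathbb{Z}^2$ with $\int_{B_R(x_n)}|u_n|^2\,dx\geq\delta$.

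\textbf{Step 2: Weak limit is a nontrivial critical point.} Set $v_n:=x_n*u_n$. By $\mathbb{Z}^2$-periodicity of $a$, $I(v_n)=I(u_n)\to d$ and $\{v_n\}$ inherits the $H^1$-bound, so along a subsequence $v_n\rightharpoonup v$ in $H^1$, $v_n\to v$ a.e.\ and in $L^s_{\mathrm{loc}}$ for every $s\geq 1$, with $v\neq 0$ by the mass lower bound. For $\phi\in C_c^\infty(\mathbb{R}^2)$, translation invariance gives $\langle I'(v_n),\phi\rangle=\langle I'(u_n),(-x_n)*\phi\rangle$; the key matched estimates $\|(-x_n)*\phi\|_X\lesssim_\phi(\log(2+|x_n|))^{1/p}$ and $\|u_n\|_X\gtrsim(\log(2+|x_n|))^{1/p}$ (the latter from H\"{o}lder applied to the mass concentration on $B_R(x_n)$, since $\int_{B_R(x_n)}|u_n|^p\,dx\geq c\delta^{p/2}$ forces $\|u_n\|_*^p\gtrsim\log|x_n|$) yield $|\langle I'(v_n),\phi\rangle|\lesssim\|I'(u_n)\|_{X'}\|u_n\|_X\to 0$. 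Passing to limits term-by-term --- weak $H^1$ for the linear parts, strong $L^q_{\mathrm{loc}}$ for the $b$-term, and for the nonlocal piece the splitting $\log|x-y|=\log(1+|x-y|)-\log(1+1/|x-y|)$ combined with the compact embedding $X\hookrightarrow L^{4p/3}_{\mathrm{loc}}$ of Lemma \ref{lemma0}(1) --- gives $\langle I'(v),\phi\rangle=0$, so $v\in X$ is a critical point of $I$.

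\textbf{Step 3: Strong convergence --- main obstacle.} Upgrading $v_n\rightharpoonup v$ to $v_n\to v$ in $X$ is the most delicate step, because $\|\cdot\|_*$ is not translation invariant and the kernel $\log|x-y|$ is sign-changing. I would first establish boundedness of $\{v_n\}$ in the full norm of $X$ (and not merely in $H^1$) by combining the mass concentration at the origin with the energy identity $I(v_n)\to d$, ruling out an unbounded $\|v_n\|_*$ through the interaction between $V_1$ and $V_2$; this boundedness then activates the compact embedding $X\hookrightarrow L^s(\mathbb{R}^2)$ for $s\in[p,\infty)$ of Lemma \ref{lemma0}(1), yielding $v_n\to v$ in $L^s$ and thus $V_2(v_n)\to V_2(v)$, $\|v_n\|_{L^q}^q\to\|v\|_{L^q}^q$. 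Setting $w_n:=v_n-v$, a.e.\ convergence together with Lemma \ref{epsilon} produces Brezis--Lieb splittings of $\|\cdot\|_{H^1}^2$, $\|\cdot\|_{L^q}^q$, $V_2$ and (most delicately) $V_1$; inserting these into $I(v_n)\to d$ and $\langle I'(v_n),v_n\rangle\to 0$ and using $I'(v)=0$ yields $\|w_n\|_{H^1}^2+\frac{\gamma}{2\pi}V_1(w_n)\to 0$, hence both summands vanish separately by nonnegativity. A concentration-compactness dichotomy then rules out residual mass of $w_n$ escaping along a new shift $y_n\in\mathbb{Z}^2$ with $|y_n|\to\infty$: such a secondary profile would itself be a nontrivial critical point, violating the energy budget since $c_g>0$ by Lemma \ref{lemma2}. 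The resulting tightness combined with the weighted Brezis--Lieb lemma upgrades a.e.\ plus $L^p(d\mu)$-norm convergence into $\|w_n\|_*\to 0$, completing $v_n\to v$ strongly in $X$.
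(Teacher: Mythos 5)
Your Step 1 is correct, and the $H^1$-boundedness argument is in fact a genuine simplification over the paper's Lemma \ref{lemma7}: the identity
\[
I(u_n)-\frac{1}{2p}\langle I'(u_n),u_n\rangle
=\frac{p-1}{2p}\|u_n\|_{H^1}^2+b\Big(\frac{1}{2p}-\frac{1}{q}\Big)\|u_n\|_{L^q}^q
\]
does kill $V_0$ exactly (the coefficient of $V_0$ in $\langle I'(u),u\rangle$ is $\frac{\gamma}{2\pi}=2p\cdot\frac{\gamma}{4p\pi}$), and both remaining coefficients are nonnegative precisely because $p\geq2$, $q\geq2p$, $b\geq0$; the paper instead runs a lengthy contradiction/normalization argument. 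Your Step~1 non-vanishing argument and your Step~2 matched estimates $\|(-x_n)*\phi\|_X\lesssim(\log(2+|x_n|))^{1/p}$, $\|u_n\|_X\gtrsim(\log(2+|x_n|))^{1/p}$ coincide with the paper's \eqref{3-39}, \eqref{3-41}.

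The genuine gap is in Step 3, in the mechanism you invoke to get $\|w_n\|_*\to 0$. First, the asserted Brezis--Lieb splitting $V_1(v_n)=V_1(v)+V_1(w_n)+o(1)$ is circular: expanding the cross term gives $B_1(|v|^p,|w_n|^p)\leq\|v\|_*^p\|w_n\|_{L^p}^p+\|v\|_{L^p}^p\|w_n\|_*^p$, and controlling the second summand already requires $\|w_n\|_*\to0$, the very thing you are trying to prove. Second, even if you could conclude $\|w_n\|_{H^1}\to0$ and $V_1(w_n)\to0$, these do not imply $\|w_n\|_*\to0$: take $w_n=\varepsilon_n\psi(\cdot-y_n)$ with a fixed bump $\psi$ and $|y_n|\to\infty$; by $\mathbb{Z}^2$-translation invariance of $B_1(\cdot,\cdot)$ in both arguments, $V_1(w_n)=\varepsilon_n^{2p}V_1(\psi)\to0$ and $\|w_n\|_{H^1}=\varepsilon_n\|\psi\|_{H^1}\to0$, while $\|w_n\|_*^p\approx\varepsilon_n^p\log|y_n|$, which can be made to converge to any positive constant. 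Third, the concentration-compactness dichotomy you invoke is misapplied: since $\|w_n\|_{H^1}\to0$, no $H^1$-mass is escaping, so there is no ``secondary profile'' to contradict $c_g>0$; what fails to be tight is the $L^p(d\mu)$-mass, and that is not detected by the usual dichotomy.

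What is actually needed, and what the paper supplies, is the \emph{anchored} estimate. Rather than splitting $V_1(v_n)$, pair $I'(\widetilde{u_n})$ against $\widetilde{u_n}-u$, use Lemma \ref{lemma5} to discard the term $B_1\bigl(|\widetilde{u_n}|^p,|\widetilde{u_n}|^{p-2}u(\widetilde{u_n}-u)\bigr)$, and arrive at $B_1\bigl(|\widetilde{u_n}|^p,|\widetilde{u_n}|^{p-2}|\widetilde{u_n}-u|^2\bigr)\to0$ \emph{with the non-vanishing factor $|\widetilde{u_n}|^p$ still present}. This is exactly the configuration to which Lemma \ref{lemma4} applies: the anchor $\widetilde{u_n}\to u\neq0$ a.e.\ forces the logarithmic weight of the second argument to vanish, giving $\int\log(1+|x|)|\widetilde{u_n}|^{p-2}|\widetilde{u_n}-u|^2\to0$, from which Lemma \ref{epsilon} and one more application of Lemma \ref{lemma4} yield $\|\widetilde{u_n}-u\|_*\to0$. (The same anchor argument, via $V_1(\widetilde{u_n})\leq C$ and Lemma \ref{lemma4}, is also how the paper gets the $X$-boundedness of $\{\widetilde{u_n}\}$ that you need before you can pass to the limit in the $V_2$ term of $\langle I'(v_n),\phi\rangle$ in your Step~2; your Steps 2 and 3 should be reordered accordingly.)
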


In order to prove Theorem \ref{Cerami}, we need some useful lemmas.

\begin{lem}\label{lemma4}
Assume $p\geq2$. Let $\{u_n\}$ be a sequence in $L^p(\mathbb{R}^2)$ s.t. $u_n \to u \in L^p(\mathbb{R}^2)\setminus\{0 \}$ a.e. on $\mathbb{R}^2$. Assume $\{v_n\}$ is a bounded sequence in $L^p(\mathbb{R}^{2})$ s.t.
\begin{equation*}
  \sup_{n\in\mathbb{N}} B_1(|u_n|^p,|v_n|^p)<+\infty.
\end{equation*}
Then, there exists $n_0\in\mathbb{Z}$ and $C>0$ s.t. $\|v_n\|_{*}\leq C $ for any $n\geq n_0$. Furthermore, if $B_1(|u_n|^p,|v_n|^p) \to 0$ and $\|v_n\|_{L^p} \to 0$, then $\|v_n\|_{*}\to 0$ as $n\rightarrow\infty$.
\end{lem}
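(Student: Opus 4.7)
The plan is to establish both claims of Lemma \ref{lemma4} by producing a single quantitative estimate of the form
\[
\|v_n\|_{*}^{p} \;\leq\; C_{1}\,B_{1}(|u_n|^p,|v_n|^p) + C_{2}\,\|v_n\|_{L^p}^{p}
\]
valid for $n$ sufficiently large, from which both conclusions follow immediately (the first with the hypothesis $\sup_n B_1(|u_n|^p,|v_n|^p)<\infty$ plus boundedness of $\{v_n\}$ in $L^p$, the second with both quantities tending to $0$).

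To produce this estimate, the first step is to extract a ``bulk of $u_n$'' in a compact set. Since $u\in L^p(\mathbb{R}^2)\setminus\{0\}$, there exist a measurable set $E\subset B_R(0)$ (for some $R>0$) and $\delta>0$ with $|E|>0$ and $|u(x)|\geq\delta$ for $x\in E$. By Egoroff's theorem applied to the a.e.\ convergence $u_n\to u$, one finds $F\subset E$ with $|F|>|E|/2>0$, $F\subset B_R(0)$, and an integer $n_0$ such that $|u_n(x)|\geq\delta/2=:\delta_0$ for every $x\in F$ and $n\geq n_0$. The second step is the elementary logarithmic inequality
\[
1+|y| \leq (1+|x|)(1+|x-y|) \quad\Longrightarrow\quad \log(1+|x-y|)\geq \log(1+|y|)-\log(1+|x|),
\]
which, for $x\in F\subset B_R(0)$, gives $\log(1+|x-y|)\geq \log(1+|y|)-\log(1+R)$.

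The third step is the actual estimate. Split the integral defining $\|v_n\|_{*}^{p}$ over $\{|y|\leq 2R\}$ and $\{|y|>2R\}$. On the first region, $\log(1+|y|)\leq\log(1+2R)$, so its contribution is at most $\log(1+2R)\|v_n\|_{L^p}^p$. On the second region, for $x\in F$ one has $|x-y|\geq|y|/2$, hence $\log(1+|x-y|)\geq\log(1+|y|/2)\geq\log(1+|y|)-\log 2$. Combined with $|u_n|^p(x)\geq\delta_0^p$ on $F$ and nonnegativity of the full integrand, this yields
\[
B_1(|u_n|^p,|v_n|^p)\;\geq\;\int_F\!\!\int_{|y|>2R}\!\!\log(1+|x-y|)\,|u_n|^p(x)|v_n|^p(y)\,dx\,dy \;\geq\;\delta_0^p|F|\!\!\int_{|y|>2R}\!\!\bigl(\log(1+|y|)-\log 2\bigr)|v_n|^p(y)\,dy,
\]
so the contribution of $\{|y|>2R\}$ to $\|v_n\|_{*}^{p}$ is bounded by $(\delta_0^p|F|)^{-1}B_1(|u_n|^p,|v_n|^p)+\log 2\cdot\|v_n\|_{L^p}^p$. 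Adding the two pieces gives the desired inequality with explicit constants $C_1=(\delta_0^p|F|)^{-1}$ and $C_2=\log(1+2R)+\log 2$, from which both assertions of the lemma follow.

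The only genuine subtlety is arranging the lower bound for $\log(1+|x-y|)$ so that it is used only on the region $|y|>2R$, where it is automatically nonnegative and proportional to $\log(1+|y|)$; on the complementary bounded region the $L^p$ bound on $v_n$ does the work. The remaining arguments (Egoroff, triangle inequality, splitting) are standard and should go through without further difficulty; no assumption on $\{u_n\}$ beyond the a.e.\ convergence to a nonzero $L^p$ limit is needed.
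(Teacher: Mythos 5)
Your proof is correct and follows essentially the same route as the paper: an Egorov argument to pin down a positive-measure set $F\subset B_R(0)$ on which $|u_n|\geq\delta_0$ for $n\geq n_0$, followed by bounding $B_1(|u_n|^p,|v_n|^p)$ from below by the contribution over $F\times(\mathbb{R}^2\setminus B_{2R})$ and splitting $\|v_n\|_*^p$ into its bounded- and unbounded-$y$ parts. The only cosmetic difference is in the elementary logarithmic inequality used for $x\in B_R$, $|y|>2R$ (the paper writes $1+|x-y|\geq\sqrt{1+|y|}$, so $\log(1+|x-y|)\geq\tfrac12\log(1+|y|)$, whereas you use $\log(1+|y|/2)\geq\log(1+|y|)-\log 2$); both yield an estimate of the same form $\|v_n\|_*^p\lesssim B_1(|u_n|^p,|v_n|^p)+\|v_n\|_{L^p}^p$.
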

\begin{proof}
Since $u\not\equiv 0$, one has $\lim_{\delta\to 0+}\left|\{|u|>\delta\}\right|=\left|\{|u|>0\}\right|>0$. Thus we can choose $R$ large enough and $\delta$ small enough s.t.
\begin{equation}\label{3-4}
  |E|:=\left|\{|u|>\delta\}\cap B_R(0)\right|>0.
\end{equation}
By applying Egorov's theorem on $E$, we can derive a subset $A \subset E$ with $|A|>0$ and $n_{0}\in\mathbb{N}$ large enough such that, $|u_{n}|\geq\frac{\delta}{2}$ for any $n\geq n_{0}$ on the set $A$.

Note that for any $x \in B_R(0)$ and $y\in \mathbb{R}^2\setminus B_{2R}(0)$,
\begin{equation}\label{3-5}
  1+|x-y|\geq 1+\frac{|y|}{2}\geq\sqrt{1+|y|},
\end{equation}
and hence, we deduce from the definition of $B_{1}$ that, for any $n\geq n_{0}$,
 \begin{align}\label{3-6}
 B_1(|u_n|^p,|v_n|^p)&\geq \int\limits_{R^2 \setminus B_{2R}(0)} \int \limits_{A}\log(1+|x-y|) |u_n|^p(x)|v_n|^p(y)dxdy \\
   \nonumber &\geq \frac{|A|}{2}\left(\frac{\delta}{2}\right)^{p}\int\limits_{R^2 \setminus B_{2R}(0)}\log(1+|y|)|v_n|^p(y)dy \\
   \nonumber &\geq \frac{|A|}{2}\left(\frac{\delta}{2}\right)^{p}\left(\|v_n\|_{*}^{p}-\log(1+2R)\|v_n\|_{L^p}^p\right).
 \end{align}
Thus if $\|v_n\|_{L^p}$ and $ B_1(|u_n|^p,|v_n|^p)$ are bounded, then we get from \eqref{3-6} that $\|v_n\|_{*}$ are bounded for any $n\geq n_{0}$. Moreover, if $B_1(|u_n|^p,|v_n|^p)\to 0$ and $\|v_n\|_{L^p}\to 0$ as $n\rightarrow\infty$, then \eqref{3-6} yields that $\|v_n\|_{*}\to 0$.
\end{proof}

\begin{lem}\label{lemma5}
Assume $p\geq2$. Let $\{\widetilde{u_n}\}$ be a bounded sequence in $X$ such that $\widetilde {u_n}\rightharpoonup u$ weakly in $X$. Then we have
\begin{equation}\label{3-7}
  B_1\left(|\widetilde {u_n}|^p,|\widetilde{u_n}|^{p-2}u(\widetilde{u_n}-u)\right)\longrightarrow 0, \quad \text{as} \,\, n\rightarrow\infty.
\end{equation}
\end{lem}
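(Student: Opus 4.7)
\textbf{Proof proposal for Lemma \ref{lemma5}.} The plan is to set $v_n := \widetilde{u_n}-u$ so that $v_n \rightharpoonup 0$ weakly in $X$, and to exploit Lemma \ref{lemma0}(1): the compact embedding $X \hookrightarrow\hookrightarrow L^s(\mathbb{R}^2)$ for every $s\in[p,+\infty)$ gives $v_n \to 0$ strongly in each such $L^s$. The whole argument then rests on the elementary logarithmic subadditivity
\begin{equation*}
\log(1+|x-y|)\leq \log\bigl((1+|x|)(1+|y|)\bigr)=\log(1+|x|)+\log(1+|y|)\qquad \forall\, x,y\in\mathbb{R}^2,
\end{equation*}
which, inserted into the definition of $B_1$ and after separating the $x$- and $y$-integrals, bounds $\bigl|B_1(|\widetilde{u_n}|^p,|\widetilde{u_n}|^{p-2}u v_n)\bigr|$ by $\mathcal{I}_n^{(1)}+\mathcal{I}_n^{(2)}$, where
\begin{equation*}
\mathcal{I}_n^{(1)}:=\|\widetilde{u_n}\|_*^{p}\int_{\mathbb{R}^2}|\widetilde{u_n}|^{p-2}|u||v_n|\,dy,\qquad \mathcal{I}_n^{(2)}:=\|\widetilde{u_n}\|_{L^p}^{p}\int_{\mathbb{R}^2}\log(1+|y|)|\widetilde{u_n}|^{p-2}|u||v_n|\,dy.
\end{equation*}

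The prefactors of both $\mathcal{I}_n^{(1)}$ and $\mathcal{I}_n^{(2)}$ are uniformly bounded by the $X$-boundedness of $\{\widetilde{u_n}\}$. For $\mathcal{I}_n^{(1)}$, a direct Hölder estimate with exponents $p/(p-2),p,p$ (with the obvious convention when $p=2$) gives the control $\|\widetilde{u_n}\|_{L^p}^{p-2}\|u\|_{L^p}\|v_n\|_{L^p}$, which tends to $0$ by the strong $L^p$-convergence $v_n\to 0$.

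The main obstacle is $\mathcal{I}_n^{(2)}$: the logarithmic weight on the $y$-side means that $L^p$-smallness of $v_n$ does not directly apply, while on the $X$-norm side we only know $\|v_n\|_*$ is bounded. The plan is therefore to decompose the $y$-integral at a level $R>0$. On the tail $\{|y|>R\}$, Hölder's inequality with respect to the weighted measure $d\mu=\log(1+|y|)dy$ and exponents $p/(p-2),p,p$ yields the bound $\|\widetilde{u_n}\|_*^{p-2}\bigl(\int_{|y|>R}\log(1+|y|)|u|^p dy\bigr)^{1/p}\|v_n\|_*$, whose middle factor tends to $0$ as $R\to\infty$ because $u\in X$, while the other factors stay uniformly bounded in $n$. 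On the ball $\{|y|\leq R\}$, the weight is dominated by $\log(1+R)$, and ordinary $L^p$-Hölder together with the strong convergence $v_n\to 0$ in $L^p(B_R(0))$ forces the contribution to vanish as $n\to\infty$ for each fixed $R$. Sending first $n\to\infty$ and then $R\to\infty$ concludes that $\mathcal{I}_n^{(2)}\to 0$, and hence $B_1\bigl(|\widetilde{u_n}|^p,|\widetilde{u_n}|^{p-2}u(\widetilde{u_n}-u)\bigr)\to 0$, as required.
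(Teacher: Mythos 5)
Your proof is correct and follows essentially the same route as the paper: splitting $\log(1+|x-y|)\leq\log(1+|x|)+\log(1+|y|)$, dispatching the first term by compact embedding and H\"older, and handling the second term by cutting the $y$-integral at radius $R$, using $\|u\|_{L^p(d\mu;\,|y|>R)}\to 0$ on the tail and strong $L^p$-convergence on $B_R(0)$. The only cosmetic difference is that you name the two pieces $\mathcal{I}_n^{(1)},\mathcal{I}_n^{(2)}$ and make the H\"older exponents explicit, whereas the paper compresses those steps; the underlying argument is identical.
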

\begin{proof}
Since $\widetilde {u_n}\rightharpoonup u$ in $X$, by Lemma \ref{lemma0} and the definition of $B_{1}$, we have
\begin{align}\label{3-8}
&\quad \left|B_1\left(|\widetilde {u_n}|^p,|\widetilde{u_n}|^{p-2}u(\widetilde {u_n}-u)\right)\right| \\
\nonumber &\leq \int_{\mathbb{R}^{2}}\int_{\mathbb{R}^{2}} \log(1+|x|)|\widetilde {u_n}|^p(x)  |\widetilde {u_n}|^{p-2}(y) |u|(y) |\widetilde {u_n}-u| (y) dxdy\\
\nonumber &\quad + \int_{\mathbb{R}^{2}}\int_{\mathbb{R}^{2}} \log(1+|y|)|\widetilde {u_n}|^p(x)  |\widetilde {u_n}|^{p-2}(y) |u|(y)|\widetilde {u_n}-u|(y)dxdy\\
\nonumber &\leq C\int_{\mathbb{R}^{2}}|\widetilde {u_n}|^{p-2} (y) |u|(y) |\widetilde {u_n}-u|(y)dy \\
\nonumber &\quad +C\int_{\mathbb{R}^{2}}\log(1+|y|)|\widetilde {u_n}|^{p-2}(y)|u|(y)|\widetilde {u_n}-u|(y)dy\\
\nonumber &\leq o_{n}(1)+C\int_{\mathbb{R}^{2}}\log(1+|y|)|\widetilde {u_n}|^{p-2}(y)|u|(y)|\widetilde{u_n}-u|(y).
\end{align}
For arbitrary $R>0$, we have, as $n\rightarrow\infty$,
\begin{eqnarray}\label{3-9}
  &&\quad \int_{B_R(0)}\log(1+|y|)|\widetilde{u_n}|^{p-2}(y)|u|(y)|\widetilde {u_n}-u|(y)dy \\
\nonumber &&\leq \log(1+R)\|\widetilde{u_n}\|^{p-2}_{L^{p}}\|u\|_{L^{p}}\|\widetilde {u_n}-u\|_{L^{p}}\longrightarrow 0.
\end{eqnarray}
Combining \eqref{3-8} and \eqref{3-9}, we deduce that, for any $R>0$,
\begin{align}\label{3-10}
&\lim_{n\rightarrow\infty}\left|B_1\left(|\widetilde {u_n}|^p,|\widetilde{u_n}|^{p-2}u(\widetilde {u_n}-u)\right)\right| \\
\nonumber \leq& C\sup_{n\in\mathbb{N}}\int_{\mathbb{R}^2\setminus B_R(0)} \log(1+|y|)|\widetilde {u_n}|^{p-2}(y)|u|(y)|\widetilde {u_n}-u|(y)dy \\
\nonumber \leq& C\left(\int_{\mathbb{R}^2\setminus B_R(0)}\log(1+|y|)|u|^p(y)dy\right)^{\frac{1}{p}}=o_{R}(1),
\end{align}
as $ R\to+\infty $. By letting $R\to+\infty$ in \eqref{3-10}, we get the desired conclusion.
\end{proof}

Now we will carry out the proof of Theorem \ref{Cerami} by splitting it into several lemmas. From now on, we will assume the sequence $\{u_n\}\subset X$ satisfies the assumption \eqref{3-2} in Theorem \ref{Cerami}.

\begin{lem}\label{lemma6}
Assume $p\geq2$ and $q\geq2p$. If $\{t_n\}\subset[0,+\infty)$ is a bounded sequence, then $I(t_n u_n)\leq I(u_n)+o_{n}(1)$, as $n \to \infty $. Furthermore, if $t_n \to 0$, then $\liminf_{n \to \infty}I(t_n u_n)\geq 0$.
\end{lem}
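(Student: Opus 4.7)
The plan is to reduce both claims to two sharp pointwise algebraic inequalities, valid for every $u\in X$ and every $t\geq 0$, whose proofs use only that $I(tu)$ and $\langle I'(u),u\rangle$ are polynomials in $t$ whose coefficients involve $\|u\|_{H^1}^2$, $V_0(u)$ and $\|u\|_{L^q}^q$. The hypotheses $p\geq 2$ and $q\geq 2p$ will enter only through the exponent ordering $2\leq 2p\leq q$, and no boundedness assumption on $\{u_n\}$ will be needed.

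For the first claim I would form
\begin{equation*}
D(t,u):=I(tu)-I(u)-\frac{t^{2p}-1}{2p}\langle I'(u),u\rangle.
\end{equation*}
Expanding each term using \eqref{2-13} together with $\langle I'(u),u\rangle=\|u\|_{H^1}^2+\tfrac{\gamma}{2\pi}V_0(u)-b\|u\|_{L^q}^q$, the coefficient in front of $V_0(u)$ cancels exactly and leaves
\begin{equation*}
D(t,u)=-h_p(t)\|u\|_{H^1}^2+k_{p,q}(t)\,b\|u\|_{L^q}^q,
\end{equation*}
where $h_p(t):=\tfrac{t^{2p}-1}{2p}-\tfrac{t^2-1}{2}$ and $k_{p,q}(t):=\tfrac{t^{2p}-1}{2p}-\tfrac{t^q-1}{q}$. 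A direct derivative check gives $h_p'(t)=t(t^{2p-2}-1)$, which vanishes only at $t=1$ (using $p\geq 2$), so $h_p$ attains its minimum $0$ there and $h_p\geq 0$; similarly $k_{p,q}'(t)=t^{2p-1}(1-t^{q-2p})$ vanishes only at $t=1$ (using $q\geq 2p$), so $k_{p,q}$ attains its maximum $0$ there and $k_{p,q}\leq 0$. Since $b\geq 0$, both summands in $D(t,u)$ are nonpositive, which gives
\begin{equation*}
I(tu)\leq I(u)+\frac{t^{2p}-1}{2p}\langle I'(u),u\rangle \qquad \text{for all } t\geq 0,\ u\in X.
\end{equation*}
Setting $u=u_n$, $t=t_n$ and combining boundedness of $\{t_n\}$ with the Cerami-type bound $|\langle I'(u_n),u_n\rangle|\leq \|I'(u_n)\|_{X'}\|u_n\|_X\leq \|I'(u_n)\|_{X'}(1+\|u_n\|_X)\to 0$ finishes the first claim.

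For the $\liminf$ claim a different cancellation does the job. A direct computation, in which the $V_0$-terms again cancel because $I(tu)$ and $t^{2p}I(u)$ contain $V_0(u)$ with the same coefficient $\tfrac{t^{2p}\gamma}{4p\pi}$, yields
\begin{equation*}
I(tu)-t^{2p}I(u)=\frac{t^2-t^{2p}}{2}\|u\|_{H^1}^2+\frac{t^{2p}-t^q}{q}\,b\|u\|_{L^q}^q.
\end{equation*}
For $t\in[0,1]$, $p\geq 2$ gives $t^{2p}\leq t^2$ and $q\geq 2p$ gives $t^q\leq t^{2p}$, so both coefficients are nonnegative and hence $I(tu)\geq t^{2p}I(u)$ on $[0,1]$. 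Since $t_n\to 0$ there is $n_0$ with $t_n\in[0,1]$ for $n\geq n_0$; then $I(t_nu_n)\geq t_n^{2p}I(u_n)\to 0\cdot d=0$, yielding $\liminf_{n\to\infty}I(t_nu_n)\geq 0$.

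The main obstacle I anticipate is pinpointing the right algebraic combinations that eliminate the $V_0(u)$-term, which has no definite sign and may well be unbounded along $\{u_n\}$: the factors $\tfrac{t^{2p}-1}{2p}$ and $t^{2p}$ are chosen precisely so that $V_0$ drops out. Once those two identities are in hand, the remaining work is a sign check of the two polynomial coefficients under the exponent ordering $2\leq 2p\leq q$, and neither part requires any further analysis of the sequence $\{u_n\}$.
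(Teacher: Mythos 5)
Your proposal is correct and, for the first claim, is the paper's argument in a slightly repackaged form: the paper substitutes $\tfrac{\gamma}{2\pi}V_0(u_n)=\langle I'(u_n),u_n\rangle-\|u_n\|_{H^1}^2+b\|u_n\|_{L^q}^q$ directly into $I(t_nu_n)-I(u_n)$ to reach \eqref{3-15}, while you phrase the same cancellation of the uncontrolled $V_0$-term as the pointwise identity for $D(t,u)$, but the resulting coefficients $-h_p$ and $k_{p,q}$ and the sign check from $2\le 2p\le q$ are identical. Where you genuinely depart from the paper is the second claim: the paper again invokes the Cerami bound, writing $I(t_nu_n)=\bigl(\tfrac{t_n^2}{2}-\tfrac{t_n^{2p}}{2p}\bigr)\|u_n\|_{H^1}^2+b\bigl(\tfrac{t_n^{2p}}{2p}-\tfrac{t_n^q}{q}\bigr)\|u_n\|_{L^q}^q+o_n(1)$ as in \eqref{3-16}, whereas you cancel $V_0$ by the scaling comparison $I(tu)\ge t^{2p}I(u)$ for $t\in[0,1]$, which uses only $I(u_n)\to d$ and makes no reference to $I'(u_n)$ at all. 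Both routes correctly avoid assuming any boundedness of $\{u_n\}$ (not yet available at this stage), and your version of the second claim is a bit more economical in its hypotheses.
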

\begin{proof}
Recall that
\begin{equation}\label{3-11}
  I(u)=\frac{1}{2}\|u\|_{H^1(\mathbb{R}^{2})}^2 + \frac{\gamma}{4p\pi}V_0(u)-\frac{b}{q}\|u\|_{L^q(\mathbb{R}^{2})}^q,
\end{equation}
\begin{equation}\label{3-12}
  \big\langle I'(u),u\big \rangle =\|u\|_{H^1(\mathbb{R}^{2})}^2+\frac{\gamma}{2\pi}V_0(u)-b\|u\|_{L^q(\mathbb{R}^{2})}^q,
\end{equation}
we have
\begin{equation}\label{3-13}
  I(t_n u_n)-I(u_n)=\frac{t_n^2-1}{2}\|u_n\|_{H^1}^2+\frac{\gamma}{4p\pi}(t_n^{2p}-1)V_0(u_n)-\frac{b}{q}(t_n^q-1)\|u_n\|_{L^q}^q.
\end{equation}
By the assumption \eqref{3-2} and \eqref{3-12}, we have
\begin{eqnarray}\label{3-14}
% \nonumber to remove numbering (before each equation)
  \frac{\gamma}{2\pi}V_0(u_n) &=& \big\langle I^{\prime}(u_n),u_n\big \rangle - \|u_n\|_{H^1}^2 + b\|u_n\|_{L^q}^q \\
 \nonumber &=&o_{n}(1)-\|u_n\|_{H^1}^2 + b\|u_n\|_{L^q}^q
\end{eqnarray}
Substituting \eqref{3-14} into \eqref{3-13}, since $p\geq2$, $q\geq2p$ and $(t_{n})_{n}$ is bounded, we get
\begin{align}\label{3-15}
&\quad I(t_nu_n)-I(u_n) \\
\nonumber &= -\bigg( \frac{t_n^{2p}-1}{2p}-\frac{t_n^2-1}{2} \bigg) \|u_n\|_{H^1}^2-b\bigg( \frac{t_n^q-1}{q}-\frac{t_n^{2p}-1}{2p}  \bigg)\|u_n\|_{L^q}^q
+o_{n}(1)\\
\nonumber & \leq o_{n}(1),
\end{align}
this shows $ I(t_n u_n)\leq I(u_n)+o_{n}(1)$. Moreover, if $t_n \to 0$, using the assumption \eqref{3-2} and \eqref{3-14} again, we get
\begin{equation}\label{3-16}
  I(t_nu_n)=\left(\frac{t_n^2}{2}-\frac{t_n^{2p}}{2p}\right)\|u_n\|_{H^1}^2+b\left(\frac{t_n^{2p}}{2p}-\frac{t_n^q}{q}\right)\|u_n\|_{L^q}^q+o_{n}(1)\geq o_{n}(1),
\end{equation}
as $n\rightarrow+\infty$. This finishes the proof of Lemma \ref{lemma6}.
\end{proof}

\begin{lem}\label{lemma7}
Assume $p\geq2$ and $q\geq2p$. Suppose sequence $\{u_n\}$ satisfies the assumption \eqref{3-2}, then $\{u_n\}$ is bounded in $H^1(\mathbb{R}^{2})$.
\end{lem}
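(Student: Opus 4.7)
The plan is to take a suitable linear combination of $I(u_n)$ and $\langle I'(u_n), u_n\rangle$ that cancels the sign-indefinite logarithmic term $V_0(u_n)$ while leaving only nonnegative contributions from $\|u_n\|_{H^1}^2$ and $\|u_n\|_{L^q}^q$. Concretely, from \eqref{3-11} and \eqref{3-12} one sees that the $V_0(u_n)$ terms in $2p\, I(u_n)$ and in $\langle I'(u_n), u_n\rangle$ both equal $\tfrac{\gamma}{2\pi} V_0(u_n)$, so subtracting eliminates them:
\begin{equation*}
2p\, I(u_n) - \langle I'(u_n), u_n\rangle = (p-1)\|u_n\|_{H^1}^2 + b\left(1 - \frac{2p}{q}\right)\|u_n\|_{L^q}^q.
\end{equation*}

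Under the standing assumptions $p \geq 2$ and $q \geq 2p$, both coefficients on the right are nonnegative: indeed $p-1 \geq 1 > 0$, and $1 - 2p/q \geq 0$. So the right-hand side dominates $(p-1)\|u_n\|_{H^1}^2$.

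For the left-hand side, $2p\, I(u_n) \to 2pd$ by the first part of \eqref{3-2}, and the Cerami condition gives
\begin{equation*}
\bigl|\langle I'(u_n), u_n\rangle\bigr| \leq \|I'(u_n)\|_{X'}\,\|u_n\|_X \leq \|I'(u_n)\|_{X'}(1+\|u_n\|_X) = o_n(1).
\end{equation*}
Combining these yields $(p-1)\|u_n\|_{H^1}^2 \leq 2pd + o_n(1)$, from which the boundedness of $\{u_n\}$ in $H^1(\mathbb{R}^2)$ follows immediately.

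There is essentially no obstacle; the only conceptual point is to recognize that $V_0$ is sign-changing and not directly controllable from $H^1$ bounds alone, so one must cancel it via a combination $\alpha I - \beta \langle I', \cdot\rangle$. The algebraic ratio forced by $V_0$ is $\alpha/\beta = 2p$, and the hypothesis $q \geq 2p$ is then precisely what guarantees that the resulting coefficient on $\|u_n\|_{L^q}^q$ remains nonnegative, so that the inequality goes in the right direction. Note that the argument only uses the Cerami bound through $\langle I'(u_n), u_n\rangle \to 0$; the full strength of $\|I'(u_n)\|_{X'}(1+\|u_n\|_X) \to 0$ is not yet required at this stage and will be used in the subsequent compactness steps leading to Theorem \ref{Cerami}.
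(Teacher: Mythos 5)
Your proof is correct and takes a genuinely different---and substantially shorter---route than the paper's. The paper argues by contradiction: it normalizes $v_n = u_n/\|u_n\|_{H^1}$, invokes Lemma~\ref{lemma6} and Lions' vanishing lemma (Lemma~\ref{vanish}) to get a nonvanishing profile after a $\mathbb{Z}^2$-translation, and then splits into the cases $q>2p$ (with $b>0$) and $q=2p$ or $b=0$ to reach a contradiction. Your observation is that the linear combination $2p\,I(u_n)-\langle I'(u_n),u_n\rangle$ cancels the sign-indefinite $V_0$ term \emph{exactly} (since its coefficients in $2p\,I$ and in $\langle I',\cdot\rangle$ are both $\tfrac{\gamma}{2\pi}$), leaving $(p-1)\|u_n\|_{H^1}^2 + b\bigl(1-\tfrac{2p}{q}\bigr)\|u_n\|_{L^q}^q$, which under $p\geq 2$, $q\geq 2p$, $b\geq 0$ is a sum of nonnegative terms dominating $(p-1)\|u_n\|_{H^1}^2$; the Cerami bound makes the left side $2pd+o_n(1)$, and boundedness is immediate. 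This is in essence the classical Ambrosetti--Rabinowitz computation, and the Cerami form of the hypothesis is used precisely to give $\langle I'(u_n),u_n\rangle = o_n(1)$ without assuming a priori that $\|u_n\|_X$ is bounded, which you correctly note. Your route avoids Lemma~\ref{lemma6} and the rescaling/translation machinery entirely, and is more transparent; the trade-off is that the cancellation is tied to the hypothesis $q\geq 2p$, so it does not extend to the regime $2p-2\leq q<2p$ of Theorem~\ref{thm3}, where the paper must instead introduce the Pohozaev-based auxiliary functionals $J_k$ (Lemma~\ref{lemma13}). In the setting of this lemma, however, your argument is a clean improvement.
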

\begin{proof}
We will prove Lemma \ref{lemma7} by contradiction arguments. Suppose on the contrary that, after passing to a subsequence, then we have $\|u_{n}\|_{H^1(\mathbb{R}^{2})}\to+\infty$.

Define $v_n:=\frac{u_n}{\|u_n\|_{H^{1}}}$, then we have $\|v_n\|_{H^1}=1$. In the following, we will carry out the proof by three steps.

\emph{Step 1}. We will show the following property:
\begin{equation}\label{3-17}
  \inf_{n\in\mathbb{N}}\sup_{x\in \mathbb{Z}^2} \int\limits_{B_2(x)} v_n^2(y)dy>0.
\end{equation}
Suppose it is not true, then by Lemma \ref{vanish}, after passing to a subsequence, $v_n \to 0$ in $L^s(\mathbb{R}^{2})$ for every $2<s<+\infty$. Thus $|V_2(tv_n)|\leq C\|tv_n\|_{L^{\frac{4p}{3}}}^{2p}\to 0$ and $\|tv_n\|_{L^q}^{q} \to 0 $ for each fixed $t\geq0$. Hence
\begin{align}\label{3-18}
 I(tv_n) &=\frac{t^2}{2}\|v_n\|_{H^1}^2 + \frac{\gamma}{4p\pi}\bigg( V_1(tv_n)-V_2(tv_n) \bigg) - \frac{b}{q}\|tv_n\|_{L^q}^q \\
 \nonumber & \geq \frac{t^2}{2}+o_{n}(1)
\end{align}
for each fixed $t\geq0$. But, on the other hand, by Lemma \ref{lemma6}, one has, for every fixed $t\geq 0$,
\begin{equation}\label{3-19}
  I(tv_n)=I\left(\left(\frac{t}{\|u_n\|_{H^1}}\right)u_n\right)\leq I(u_n)+o_{n}(1)=d+o_{n}(1),
\end{equation}
which is absurd if we take $t$ large enough (say, $t>\sqrt{2d}$). So we have obtained the property \eqref{3-17}.

\emph{Step 2}. By \eqref{3-17}, there exists a sequence $\{x_n\}\subset\mathbb{Z}^2$ s.t.
\begin{equation}\label{3-20}
  \liminf_{n\rightarrow\infty}\int_{B_2(x_n)}v_n^2(y)dy>0,
\end{equation}
and hence $w_n:=(-x_n)*v_n$ ($n\in\mathbb{N}$) satisfy
\begin{equation}\label{3-21}
  \liminf_{n\rightarrow\infty}\int_{B_2(0)} w_n^2(y)dy>0.
\end{equation}
Note that $\|w_n\|_{H^1(\mathbb{R}^{2})}=\|v_n\|_{H^1(\mathbb{R}^{2})}=1$, one has $w_n \rightharpoonup w$ for some $w\in H^1(\mathbb{R}^{2})$. Then, Rellich's compact embedding theorem and \eqref{3-21} yield that $w\not\equiv 0$. By passing to a subsequence, we can also assume $w_n \longrightarrow w$ a.e. on $\mathbb{R}^2$.

By the translation invariant in $\mathbb{Z}^2$ of $I$ and taking $t =1$ in \eqref{3-19}, we have
\begin{equation}\label{3-22}
  I(w_n)=I(v_n)\leq d+o_{n}(1).
\end{equation}
Hence, by Sobolev embeddings, we arrive at
\begin{align}\label{3-23}
\frac{\gamma}{4p\pi}V_1(w_n)&=I(w_n)-\frac{1}{2}\|w_n\|_{H^1}^2+\frac{\gamma}{4p\pi}V_2(w_n)+\frac{b}{q}\|w_n\|_{L^q}^q  \\
                    \nonumber &\leq d+o_{n}(1)-\frac{1}{2}+\frac{\gamma}{4p\pi}V_2(w_n)+\frac{b}{q}\|w_n\|_{L^q}^q \\
                    \nonumber &\leq C.
\end{align}

\emph{Step 3}. In this step, we will distinguish two different cases separately.

\emph{Case 1}: $b>0$ and $q>2p$. In this case, by Fatou's lemma, after passing to a subsequence, we have
\begin{align}\label{3-24}
 I(tv_n)=I(tw_n)&=\frac{t^2}{2}\|w_n\|_{H^1}^2+\frac{\gamma t^{2p}}{4p\pi}\bigg(V_1(w_n)-V_2(w_n)\bigg)-\frac{bt^q}{q}\|w_n\|_{L^q}^q \\
 \nonumber &\leq\frac{t^2}{2}+C t^{2p}-\frac{bt^q}{q}\|w\|_{L^q}^q+o_{n}(1).
\end{align}
Thus there exist $t_0$ and $n_{0}$ sufficiently large such that, for any $n\geq n_{0}$,
\begin{equation}\label{3-25}
  I(t_0 v_n)=I(t_0 w_n)\leq -1.
\end{equation}
However, since $\frac{t_0}{\|u_n\|_{H^1}} \to 0$, \eqref{3-25} contradicts with Lemma \ref{lemma6}.

\emph{Case 2}: $b=0$ or $q=2p$. In this case, we have
\begin{align}\label{3-26}
I(tv_n)=I(tw_n) &=\frac{t^2}{2}\|w_n\|_{H^1}^2+\frac{\gamma t^{2p}}{4p\pi}V_0(w_n)-\frac{bt^q}{q}\|w_n\|_{L^q}^q \\
               \nonumber &:=\frac{t^2}{2}+\frac{\gamma t^{2p}}{4p\pi}\rho_n
\end{align}
with $\rho_n:=V_0(w_n)-\frac{4p\pi b}{\gamma q}\|w_n\|_{L^q}^q$. We will show that
\begin{equation}\label{3-27}
  \rho:=\limsup_{n\to\infty}\rho_n<0.
\end{equation}
If not, then $\limsup_{n\to\infty}\rho_n\geq0$. After passing to a subsequence, we have, if we choose $t_0$ large enough (say, $ t_0\geq\sqrt{4d}$), then
\begin{align}\label{3-28}
I(t_0 v_n)=I(t_0 w_n)&=\frac{t_0^2}{2}+\frac{\gamma t_0^{2p}}{4p\pi}\rho_n\geq\frac{t_0^2}{2}+o_{n}(1)\\
               \nonumber &\geq 2d+o_{n}(1).
\end{align}
However, by the assumption \eqref{3-2} and Lemma \ref{lemma6}, one has
\begin{align}\label{3-27}
 d+o(1)=I(u_n) &\geq I\left(\left(\frac{t_0}{\|u_n\|_{H^1}}\right)u_n\right)+o_{n}(1)\\
   \nonumber &=I(t_0 v_n)+o_{n}(1)\geq 2d +o_{n}(1),
\end{align}
which is a contradiction. Thus $\rho=\limsup_{n\to\infty}\rho_n<0$, and hence there exists a $n_{0}\in\mathbb{N}$ large enough such that $\rho_n\leq -{\epsilon}_0<0$ for any $n\geq n_{0}$. Then, we have, for any $n\geq n_{0}$,
\begin{equation}\label{3-28}
  I(tv_n)=\frac{t^2}{2}+\frac{\gamma t^{2p}}{4p\pi}\rho_n\leq\frac{t^2}{2}-\frac{\gamma t^{2p}}{4p\pi}{\epsilon}_0.
\end{equation}
Thus there exists $t_0$ sufficiently large such that, for any $n\geq n_{0}$, $I(t_0 v_n)\leq -1$, which contradicts with Lemma \ref{lemma6} again. Therefore, $\{u_n\}$ is bounded in $H^1(\mathbb{R}^{2})$.
\end{proof}

\begin{lem}\label{lemma8}
Assume $p\geq2$ and $q>2$. Suppose $\{u_n\}$ satisfies the assumption \eqref{3-2}. Then we have
\begin{equation}\label{3-29}
  \liminf_{n\rightarrow\infty}\sup_{x\in\mathbb{Z}^2}\int_{B_2(x)}u_n^2(y)dy>0.
\end{equation}
\end{lem}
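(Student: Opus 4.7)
The plan is a standard non-vanishing argument by contradiction, using the sign of the mollified logarithmic kernel $\log(1+|x-y|)$ to turn the Cerami identity into coercive control of $\|u_n\|_{H^1}$.

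First I would negate the conclusion: after passing to a subsequence, assume $\sup_{x\in\mathbb{Z}^{2}}\int_{B_{2}(x)}u_{n}^{2}\,dy\to 0$. Combined with the $H^{1}$-boundedness of $\{u_n\}$ (Lemma~\ref{lemma7}, which is available in the intended application where $q\geq 2p$), P.L.~Lions' vanishing Lemma~\ref{vanish} then yields $u_{n}\to 0$ strongly in $L^{r}(\mathbb{R}^{2})$ for every $r\in(2,+\infty)$. I would then immediately extract two consequences: since $q>2$ we get $\|u_{n}\|_{L^{q}}^{q}\to 0$, and since $p\geq 2$ gives $\tfrac{4p}{3}>2$, the Hardy--Littlewood--Sobolev bound $|V_{2}(u)|\leq C\|u\|_{L^{4p/3}}^{2p}$ from \eqref{2-12} forces $V_{2}(u_{n})\to 0$.

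The second step is to test $I'(u_n)$ against $u_n$. The assumption \eqref{3-2} gives
\[
\bigl|\langle I'(u_{n}),u_{n}\rangle\bigr|\leq \|I'(u_{n})\|_{X'}\,\|u_{n}\|_{X}\leq \|I'(u_{n})\|_{X'}\bigl(1+\|u_{n}\|_{X}\bigr)=o_{n}(1),
\]
so expanding via \eqref{3-12} and writing $V_{0}=V_{1}-V_{2}$,
\[
\|u_{n}\|_{H^{1}}^{2}+\tfrac{\gamma}{2\pi}V_{1}(u_{n})=\tfrac{\gamma}{2\pi}V_{2}(u_{n})+b\|u_{n}\|_{L^{q}}^{q}+o_{n}(1)=o_{n}(1).
\]
Here the key structural observation is that $\log(1+|x-y|)\geq 0$, so $V_{1}(u_{n})\geq 0$; the two summands on the left therefore have the same sign and must each vanish, yielding $\|u_{n}\|_{H^{1}}\to 0$, $V_{1}(u_{n})\to 0$, and hence $V_{0}(u_{n})\to 0$.

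To close, I would substitute these three decays into the energy formula \eqref{2-13}:
\[
I(u_{n})=\tfrac{1}{2}\|u_{n}\|_{H^{1}}^{2}+\tfrac{\gamma}{4p\pi}V_{0}(u_{n})-\tfrac{b}{q}\|u_{n}\|_{L^{q}}^{q}\longrightarrow 0,
\]
in direct contradiction with $I(u_{n})\to d>0$. I do not expect a serious obstacle: $H^{1}$-boundedness is already in hand, the pairing $\langle I'(u_n),u_n\rangle=o_n(1)$ is a free consequence of the $(1+\|u_n\|_X)$ factor in \eqref{3-2} (so that no $X$-norm bound is needed at this stage), and the only nontrivial ingredient is the positivity of $V_{1}$, which is exactly the mechanism that makes the space $X$ with the kernel split $V_{0}=V_{1}-V_{2}$ suitable for this problem.
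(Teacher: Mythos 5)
Your argument is correct and coincides with the paper's proof: negate, apply Lions' vanishing lemma to get $u_n\to 0$ in $L^r$ for $r>2$ (hence $V_2(u_n)\to 0$ and $\|u_n\|_{L^q}\to 0$), use $\langle I'(u_n),u_n\rangle=o_n(1)$ and $V_1\geq 0$ to force $\|u_n\|_{H^1}\to 0$ and $V_1(u_n)\to 0$, and conclude $I(u_n)\to 0$, contradicting $d>0$. You are also right to flag that the $H^1$-boundedness needed for Lemma~\ref{vanish} comes from Lemma~\ref{lemma7}, which the paper uses implicitly.
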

\begin{proof}
Suppose it is false. Then by Lemma \ref{vanish}, after passing to a subsequence, we have $u_n \to 0$ in $L^s(\mathbb{R}^{2})$ for every $s>2$. Since
\begin{equation}\label{3-30}
  \big\langle I^{\prime}(u_n),u_n\big\rangle=\|u_n\|_{H^1}^2+\frac{\gamma}{2\pi}\left(V_1(u_n)-V_2(u_n)\right)-b\|u_n\|_{L^q}^q,
\end{equation}
thus we get
\begin{equation}\label{3-31}
  \|u_n\|_{H^1}^2+\frac{\gamma}{2\pi}V_1(u_n)= \big\langle I^{\prime}(u_n),u_n\big\rangle+\frac{\gamma}{2\pi}V_2(u_n)+b\|u_n\|_{L^q}^q \longrightarrow 0,
\end{equation}
as $n\rightarrow\infty$. It follows immediately that $\|u_n\|_{H^1}^2\to 0$, $V_1(u_n)\to 0$, and hence
\begin{equation}\label{3-32}
  I(u_n)=\frac{1}{2}\|u_n\|_{H^1}^2+\frac{\gamma}{4p\pi}\bigg(V_1(u_n)-V_2(u_n)\bigg)-\frac{b}{q}\|u_n\|_{L^q}^q\longrightarrow 0,
\end{equation}
which is absurd, because $I(u_n)\to d>0$. This finishes our proof of Lemma \ref{lemma8}.
\end{proof}

By Lemma \ref{lemma8}, there exists a sequence $\{x_n\}\subset\mathbb{Z}^2$ s.t.
\begin{equation}\label{3-33}
  \liminf_{n\rightarrow\infty}\int_{B_2(x_n)}u_n^2(y)dy>0,
\end{equation}
and hence $\widetilde{u_n}:=(-x_n)*u_n\in X$ ($n\in\mathbb{N}$) satisfy
\begin{equation}\label{3-34}
  \liminf_{n\rightarrow\infty}\int_{B_2(0)}\widetilde{u_n}^2(y)dy>0.
\end{equation}
Note that $\|\widetilde{u_n}\|_{H^{1}}=\|u_{n}\|_{H^{1}}$ are bounded, one has $\widetilde{u_n}\rightharpoonup u$ for some $u\in H^1(\mathbb{R}^{2})$. Then, Rellich's compact embedding theorem and \eqref{3-34} yield that $u\not\equiv 0$. By passing to a subsequence, we can also assume $\widetilde{u_n} \longrightarrow u$ a.e. on $\mathbb{R}^2$.

Now we are ready to complete the proof of Theorem \ref{Cerami}.
\begin{proof}[Proof of Theorem \ref{Cerami} (completed)]
We will complete the proof of Theorem \ref{Cerami} by the following three steps.

\emph{Step 1}: We are to show that $\{\widetilde{u_n}\}$ is bounded in $X$. Indeed, by assumption \eqref{3-2}, we have
\begin{align}\label{3-35}
\frac{\gamma}{2\pi}V_1(\widetilde {u_n})&=\big\langle I^{\prime}(\widetilde {u_n}) ,\widetilde {u_n}\big \rangle-\|\widetilde {u_n}\|_{H^1}^2
+\frac{\gamma}{2\pi}V_2(\widetilde {u_n})+b\|\widetilde {u_n}\|_{L^q}^q \\
\nonumber &=\big\langle I^{\prime}(u_n) ,u_n \big \rangle-\|u_n\|_{H^1}^2+\frac{\gamma}{2\pi}V_2(u_n)+b\|u_n\|_{L^q}^q \\
\nonumber &=o_{n}(1)-\|u_n\|_{H^1}^2+\frac{\gamma}{2\pi}V_2(u_n)+b\|u_n\|_{L^q}^q\leq C.
\end{align}
By Lemma \ref{lemma4}, we get from \eqref{3-35} that $\|\widetilde {u_n}\|_*$ are bounded, and hence $\{\widetilde {u_n}\}$ is bounded in $X$. We may assume that, after passing to a subsequence if necessary, $\widetilde {u_n}\longrightarrow u$ weakly in $X$, so that $u\in X$. By the compact embeddings of $X \hookrightarrow\hookrightarrow L^s(\mathbb{R}^{2})$ for all $p\leq s<+\infty$, we have $\widetilde {u_n}\longrightarrow u$ strongly in $L^s(\mathbb{R}^{2})$ for all $p\leq s<+\infty$.

\emph{Step 2}: Our goal is to prove $\widetilde {u_n} \longrightarrow u$ strongly in $X$. First, we will show that
\begin{equation}\label{3-36}
  \big\langle I^{\prime}(\widetilde{u_n}),\widetilde{u_n}-u\big\rangle\longrightarrow 0, \quad \text{as} \,\, n \to \infty.
\end{equation}
In fact, by the $\mathbb{Z}^2$-translation invariance, we have
\begin{equation}\label{3-37}
   \bigg|\big\langle I^{\prime}(\widetilde{u_n}),\widetilde{u_n}-u\big\rangle\bigg|=\bigg|\big\langle I^{\prime}(u_n),u_n-x_{n}*u\big\rangle\bigg|
   \leq\|I^{\prime}(u_n)\|_{X'}\bigg(\|u_n\|_{X}+\|x_{n}*u\|_{X}\bigg).
\end{equation}
For the last term in \eqref{3-37}, we have
\begin{align}\label{3-38}
\|x_{n}*u\|_{*}&=\left(\int_{\mathbb{R}^{2}}\log(1+|x|)|u|^p(x-x_n)dx\right)^{\frac{1}{p}}=\left(\int_{\mathbb{R}^{2}}\log(1+|x+x_n|)|u|^{p}dx\right)^{\frac{1}{p}} \\
\nonumber &\leq \left(\int_{\mathbb{R}^{2}}\log(1+|x|)|u|^{p}dx\right)^{\frac{1}{p}}+\left(\int_{\mathbb{R}^{2}}\log(1+|x_n|)|u|^pdx\right)^{\frac{1}{p}} \\
\nonumber &\leq \|u\|_{*}+\left(\log(1+|x_n|)\right)^{\frac{1}{p}}\|u\|_{L^{p}(\mathbb{R}^{2})},
\end{align}
and hence
\begin{equation}\label{3-39}
  \|x_{n}*u\|_{X}\leq C_1+C_2\left(\log(1+|x_n|)\right)^{\frac{1}{p}}.
\end{equation}
On the other hand, for any $n\in\mathbb{N}$ such that $|x_{n}|\geq4$, one can infer from \eqref{3-34} that
\begin{align}\label{3-40}
\|u_n\|_{*} &=\left(\int_{\mathbb{R}^{2}}\log(1+|x-x_n|)|\widetilde{u_n}|^{p}dx\right)^{\frac{1}{p}} \\
\nonumber &\geq \left(\int_{B_{2}(0)}\log\left(1+\frac{|x_n|}{2}\right)|\widetilde{u_n}|^{p}dx\right)^{\frac{1}{p}}\geq \left(\int_{B_{2}(0)}\log\left(\sqrt{1+|x_n|}\right)|\widetilde{u_n}|^{p}dx\right)^{\frac{1}{p}} \\
\nonumber &\geq C_{3}\left(\log(1+|x_n|)\right)^{\frac{1}{p}},
\end{align}
and it follows immediately that
\begin{equation}\label{3-41}
  \|u_n\|_{X}\geq C_{4}+C_{3}\left(\log(1+|x_n|)\right)^{\frac{1}{p}}.
\end{equation}
Therefore, by combining \eqref{3-39} and \eqref{3-41}, we have
\begin{equation}\label{3-42}
  \|x_{n}*u\|_X\leq C\|u_n\|_X.
\end{equation}
Then, by the assumption \eqref{3-2} and \eqref{3-37}, we have
\begin{equation}\label{3-43}
  \big\langle I'(\widetilde {u_n}),\widetilde {u_n}-u \big\rangle\longrightarrow 0, \quad \text{as} \,\, n\rightarrow\infty,
\end{equation}
this proves the property \eqref{3-36}.

As a consequence, we get
\begin{align}\label{3-44}
o_{n}(1)&=\big\langle I^{\prime}(\widetilde {u_n}) ,\widetilde {u_n}-u\big\rangle=\big\langle I^{\prime}(\widetilde {u_n}) ,\widetilde {u_n} \big \rangle -\big\langle I^{\prime}(\widetilde {u_n}) , u  \big \rangle\\
\nonumber &= \|\widetilde {u_n}\|_{H^1}^2 -\|u\|_{H^1}^2 +o_{n}(1)+\frac{\gamma}{4p\pi}\bigg\langle V'_{0}(\widetilde {u_n}),\widetilde {u_n}-u \bigg\rangle-b\int |\widetilde {u_n}|^{q-2} \widetilde {u_n} (\widetilde {u_n}-u).
\end{align}
By compact embeddings $X\hookrightarrow\hookrightarrow L^s(\mathbb{R}^{2})$ for any $p\leq s<\infty$ and Hardy-Littlewood-Sobolev inequalities, we have
\begin{equation}\label{3-45}
  b\int |\widetilde {u_n}|^{p-2} \widetilde {u_n} (\widetilde {u_n}-u) \longrightarrow 0, \qquad
 \frac{\gamma}{4p\pi}\bigg\langle V_2^{\prime}(\widetilde {u_n}),\widetilde {u_n}-u \bigg\rangle \longrightarrow 0.
\end{equation}
At the same time, one can also infer from Lemma \ref{lemma5} that
\begin{align}\label{3-46}
&\quad \frac{\gamma}{4p\pi}\bigg\langle V'_{1}(\widetilde {u_n}),\widetilde {u_n}-u \bigg\rangle
=\frac{\gamma}{2\pi}B_1\bigg( |\widetilde {u_n}|^{p},|\widetilde {u_n}|^{p-2}\widetilde {u_n}(\widetilde {u_n}-u)\bigg) \\
\nonumber &=\frac{\gamma}{2\pi}B_1 \bigg(|\widetilde {u_n}|^{p},|\widetilde {u_n}|^{p-2}\left((\widetilde {u_n}-u)^2+u (\widetilde {u_n}-u)\right)\bigg)\\
\nonumber &=\frac{\gamma}{2\pi}B_1\bigg(|\widetilde {u_n}|^{p},|\widetilde {u_n}|^{p-2}|\widetilde{u_n}-u|^2\bigg)
+\frac{\gamma}{2\pi}B_1\bigg(|\widetilde {u_n}|^{p},|\widetilde {u_n}|^{p-2}u (\widetilde{u_n}-u)\bigg) \\
\nonumber &=\frac{\gamma}{2\pi}B_1\bigg(|\widetilde {u_n}|^{p},|\widetilde {u_n}|^{p-2}|\widetilde{u_n}-u|^2\bigg)+o_{n}(1).
\end{align}
Define $|v_n|^p:=|\widetilde{u_n}|^{p-2}|\widetilde{u_n}-u|^2$ for every $n\in\mathbb{N}$, then
\begin{equation}\label{3-47}
  B_1 \bigg( |\widetilde {u_n}|^{p},|\widetilde {u_n}|^{p-2}|\widetilde{u_n}-u|^2\bigg)=B_1\bigg(|\widetilde {u_n}|^{p}, |v_n|^{p}\bigg)\geq 0,
\end{equation}
and hence, we get from \eqref{3-44}, \eqref{3-45}, \eqref{3-46} and \eqref{3-47} that
\begin{align}\label{3-48}
o_{n}(1) &=\big\langle I^{\prime}(\widetilde {u_n}) ,\widetilde {u_n}-u\big \rangle\\
\nonumber &=o_{n}(1)+\|\widetilde {u_n}\|_{H^1}^2 -\|u\|_{H^1}^2  + B_1 \big( |\widetilde {u_n}|^{p}, |v_n|^{p} \big)\\
\nonumber &\geq o_{n}(1)+\|\widetilde {u_n}\|_{H^1}^2 -\|u\|_{H^1}^2,
\end{align}
which implies $\|\widetilde {u_n}\|_{H^1}^2\to\|u\|_{H^1}^2$ and $B_1\big( |\widetilde {u_n}|^{p},|v_n|^{p}\big)\to 0$. Thus we derive $\|\widetilde {u_n}-u\|_{H^{1}(\mathbb{R}^{2})}\rightarrow 0$ as $n\rightarrow\infty$. Again, by the compact embedding $X\hookrightarrow\hookrightarrow L^p(\mathbb{R}^{2})$, we get $\|v_n\|_{L^{p}(\mathbb{R}^{2})}\to 0$. Thus by Lemma \ref{lemma4}, we arrive at $\|v_n\|_{*}\to 0$.

Now, we apply Lemma \ref{epsilon} (take $\epsilon=\frac{1}{2}$ therein) and obtain
\begin{align}\label{3-49}
&\quad o_{n}(1)=\|v_n\|_{*}^{p}=\int_{\mathbb{R}^{2}}\log(1+|x|)|\widetilde {u_n}|^{p-2}|\widetilde{u_n}-u|^2 dx \\
    \nonumber &=\int_{\mathbb{R}^{2}}\log(1+|x|)\bigg(|\widetilde{u_n}-u|^{p-2}+|\widetilde {u_n}|^{p-2}-|\widetilde{u_n}-u|^{p-2}\bigg)|\widetilde{u_n}-u|^2 dx \\
    \nonumber &=\int_{\mathbb{R}^{2}}\log(1+|x|)|\widetilde {u_n}-u|^{p}dx+\int_{\mathbb{R}^{2}}\log(1+|x|)\bigg(|\widetilde{u_n}|^{p-2}-
    |\widetilde{u_n}-u|^{p-2}\bigg)|\widetilde{u_n}-u|^2 dx \\
    \nonumber &\geq\frac{1}{2}\int_{\mathbb{R}^{2}}\log(1+|x|)|\widetilde {u_n}-u|^{p}dx-C\int_{\mathbb{R}^{2}}\log(1+|x|)|u|^{p-2}|\widetilde{u_n}-u|^2 dx.
\end{align}
By method similar to the proof of Lemma \ref{lemma5}, we can also deduce
\begin{equation}\label{3-50}
  B_{1}(|\widetilde{u_n}|^{p},|u|^{p-2}|\widetilde{u_n}-u|^2)\rightarrow0, \qquad \text{as} \,\, n\rightarrow\infty.
\end{equation}
As a consequence, if we define $|\widetilde{v_n}|^p:=|u|^{p-2}|\widetilde{u_n}-u|^2$ for every $n\in\mathbb{N}$, then Lemma \ref{lemma4} implies
\begin{equation}\label{3-51}
  \|\widetilde{v_n}\|_{*}=\left(\int_{\mathbb{R}^{2}}\log(1+|x|)|u|^{p-2}|\widetilde{u_n}-u|^2 dx\right)^{\frac{1}{p}}\rightarrow0, \qquad \text{as} \,\, n\rightarrow\infty.
\end{equation}
Combining \eqref{3-49} and \eqref{3-51} yields that $\|\widetilde {u_n}-u \|_{*}\to 0$. Combining with the $H^1(\mathbb{R}^{2})$ strong convergence, we finally derive the desired strong convergence in $X$:
\begin{equation}\label{3-52}
  \|\widetilde {u_n}-u\|_X=\|(-x_{n})*u-u\|_X\rightarrow 0, \qquad \text{as} \,\, n\rightarrow\infty.
\end{equation}

\emph{Step 3}: We will prove $u\in X\setminus\{0\}$ is a critical point of $I$, i.e., $I^{\prime}(u)=0$.

For any given $v \in X$, similar to the proof of \eqref{3-39}, we can deduce that
\begin{equation}\label{3-53}
  \|x_{n}*v\|_{X}\leq C_{5}+C_{6}\log(1+|x_n|),
\end{equation}
and hence, by \eqref{3-41}, we have
\begin{equation}\label{3-54}
  \|x_{n}*v\|_{X}\leq C\|u_n\|_{X}.
\end{equation}
By assumption \eqref{3-2} and \eqref{3-54}, we get
\begin{align}\label{3-55}
\bigg|\big\langle I'(\widetilde {u_n}),v\big\rangle\bigg|&=\big|\langle I'(u_n),x_{n}*v\rangle\big|\leq\|I'(u_n)\|_{X'}\|x_{n}*v\|_{X} \\
\nonumber & \leq C\|I'(u_n)\|_{X'}\|u_n\|_{X}\longrightarrow 0.
\end{align}
Note that $\big\langle I'(u),v\big\rangle=\lim_{n\to\infty}\big\langle I'(\widetilde{u_n}),v\big\rangle$, thus we have $ \big \langle I^{\prime}(u),v  \big \rangle =0 $. This concludes our proof of Theorem \ref{Cerami}.
\end{proof}

\subsection{Quantitative Deformation Lemma}
In this subsection, we will construct the nonincreasing flow based on the Cerami compactness condition in Theorem \ref{Cerami}.

First, we introduce the some standard notations. For any given $c>0$, we denote the set of critical points at the energy level $c$ by
\begin{equation}\label{3+1}
  K_c:=\{u\in X: \, I'(u)=0, \, I(u)=c\}.
\end{equation}
For any $\rho>0$, we define the set
\begin{equation}\label{3+2}
  A_{c,\rho}:=\{u\in X: \, \|u-v\|_{H^1}\leq\rho \,\,\, \text{for some} \,\, v\in K_c\}.
\end{equation}
For any $c\geq 0$, the sub-level set is denoted by
\begin{equation}\label{3+3}
  I^{c}:=\{u\in X: \, I(u)\leq c\},
\end{equation}
and, in particular,
\begin{equation}\label{3+4}
  D:=I^{0}=\{u\in X: \, I(u)\leq 0\}.
\end{equation}

We have the following lemma on important properties of the set $A_{c,\rho}$.
\begin{lem}\label{lemma9}
Assume $p\geq2$ and $q\geq2p$. Then $A_{c,\rho}$ satisfies the following two properties: \\
1. \, For any $c>0$ and $\rho>0$, $A_{c,\rho}$ is symmetric w.r.t. the reflection $u\mapsto -u$ and invariant under $\mathbb{Z}^2$-translations; \\
2. \, For every given $c>0$, there exists a $\rho_{0}(c)>0$ such that, $A_{c,\rho}\cap D=\emptyset$, $\forall \, \rho\in(0,\rho_{0})$.
\end{lem}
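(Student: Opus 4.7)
Property (1) is a direct consequence of symmetry and invariance of the ingredients. The functional $I$ is even since $V_0(u)$ and $\|u\|_{L^q}^q$ depend only on $|u|$, and a direct computation gives $I'(-u)=-I'(u)$; hence $u\in K_c$ iff $-u\in K_c$. Since $\|u-v\|_{H^1}=\|(-u)-(-v)\|_{H^1}$, $A_{c,\rho}$ is symmetric. For the $\mathbb{Z}^2$-translation invariance, I use that $a$ is $\mathbb{Z}^2$-periodic: a change of variables gives $\|x*u\|_{H^1}=\|u\|_{H^1}$ and $I(x*u)=I(u)$ for every $x\in\mathbb{Z}^2$, together with $I'(x*u)=x*I'(u)$ in $X'$. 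Thus $K_c$ is $\mathbb{Z}^2$-invariant and the $H^1$-distance in the definition of $A_{c,\rho}$ is preserved by translations, so $A_{c,\rho}$ is invariant as well.

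For property (2), the plan is to argue by contradiction and combine the Cerami compactness condition (Theorem \ref{Cerami}) with the lower semi-continuity of $I$ on $H^1(\mathbb{R}^2)$ (Lemma \ref{lemma0}(3)). Suppose that for some $c>0$ the conclusion fails, so there exist $\rho_n\downarrow 0$ and $u_n\in A_{c,\rho_n}\cap D$, i.e.\ $u_n\in X$ with $I(u_n)\le 0$ and $v_n\in K_c$ satisfying $\|u_n-v_n\|_{H^1}\le\rho_n$. Since $I(v_n)=c>0$ and $I'(v_n)=0$, the sequence $\{v_n\}$ trivially satisfies the hypothesis \eqref{3-2} of Theorem \ref{Cerami}. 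Hence, passing to a subsequence, there exist $x_n\in\mathbb{Z}^2$ and $v\in X\setminus\{0\}$ with $I(v)=c$ such that $x_n*v_n\to v$ strongly in $X$, and in particular in $H^1(\mathbb{R}^2)$.

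Setting $\tilde u_n:=x_n*u_n$, translation invariance of the $H^1$-norm (already established for property (1)) gives
\[
\|\tilde u_n-x_n*v_n\|_{H^1}=\|u_n-v_n\|_{H^1}\le\rho_n\longrightarrow 0,
\]
so $\tilde u_n\to v$ in $H^1(\mathbb{R}^2)$. By $\mathbb{Z}^2$-translation invariance of $I$, $I(\tilde u_n)=I(u_n)\le 0$. Applying the lower semi-continuity of $I$ on $H^1(\mathbb{R}^2)$ yields
\[
c=I(v)\le\liminf_{n\to\infty}I(\tilde u_n)\le 0,
\]
contradicting $c>0$, which completes property (2).

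The main obstacle is that elements of $A_{c,\rho}$ are only close to $K_c$ in the $H^1$-norm, while $I$ is in general \emph{not} continuous on $H^1(\mathbb{R}^2)$ (only lower semi-continuous, because of the $V_1$-term). The reason the argument still works is twofold: first, $H^1$-lower semi-continuity is exactly the inequality $\liminf I(\tilde u_n)\ge I(v)$ needed to force $I(\tilde u_n)>0$ for large $n$; second, Theorem \ref{Cerami} provides the $\mathbb{Z}^2$-translational compactness required to replace the otherwise possibly drifting sequence $\{v_n\}\subset K_c$ by a strongly convergent one. Without this translational compactness, $K_c$ could be $H^1$-unbounded (by translation of a single critical point) and the contradiction scheme would break down.
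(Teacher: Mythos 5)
Your proof is correct and takes essentially the same route as the paper's: argue by contradiction with $\rho_n\to 0$ and $u_n\in A_{c,\rho_n}\cap D$, apply Theorem \ref{Cerami} to the approximating sequence $\{v_n\}\subset K_c$ (which trivially satisfies the Cerami condition) to obtain $x_n*v_n\to v\in K_c$ strongly in $X$, transfer the $H^1$-closeness to conclude $x_n*u_n\to v$ in $H^1(\mathbb{R}^2)$, and then invoke the $H^1$-lower semicontinuity of $I$ (Lemma \ref{lemma0}(3)) to derive $0<c=I(v)\le\liminf I(x_n*u_n)\le 0$, a contradiction. The only differences are cosmetic: you spell out why property (1) follows from the evenness and $\mathbb{Z}^2$-invariance of $I$ (which the paper dismisses as immediate) and add a helpful closing paragraph explaining the role of lower semicontinuity and translational compactness, but the argument itself is identical.
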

\begin{proof}
\emph{Proof of 1}: This is an immediate consequence from the symmetry w.r.t. the reflection $u\mapsto -u$ and $\mathbb{Z}^2$-translations invariance of the energy functional $I$.

\emph{Proof of 2}: We will prove it by contradiction arguments. Assume that there exist sequences $\rho_n\to 0$ and $\{u_{n}\}\subset X$ s.t. $u_n\in A_{c,\rho_{n}}\cap D$ for each $n\in\mathbb{N}$. Taking $v_n\in K_c$ such that $\|u_n-v_n\|_{H^{1}}\leq\rho_n\to 0$. By Theorem \ref{Cerami}, after passing to a subsequence, there exists a sequence $\{x_n\}\subset \mathbb{Z}^2 $ s.t. $x_n*v_n \to v \in K_c$ strongly in $X$. By the $\mathbb{Z}^2$-translations invariance, $x_n*u_n \in A_{c,\rho_{n}}\cap D$, and we have
\begin{align}\label{3+5}
\| x_n*u_n -v\|_{H^{1}} &\leq \|x_n*u_n -x_n*v_n \|_{H^{1}} + \| x_v*v_n -v \|_{H^{1}} \\
               \nonumber &= \| u_n -v_n \|_{H^{1}}+ \|x_n*v_n -v\|_{H^{1}}\\
               \nonumber &\leq \rho_n+o_{n}(1)\longrightarrow 0,
\end{align}
as $n\rightarrow\infty$. However, since $x_n*u_n \in D$ and $I$ is l.s.c on $H^1(\mathbb{R}^{2})$, we get
\begin{equation}\label{3+6}
  0<c=I(v)\leq\liminf_{n\to\infty}I(x_n*u_n)\leq 0,
\end{equation}
which is a contradiction. Hence we infer that, for each $c>0$, there is a $\rho_0(c)>0$ s.t., for all $0<\rho<\rho_{0}(c)$, $A_{c,\rho}\cap D=\emptyset$.
\end{proof}

\begin{lem}[Quantitative deformation lemma]\label{deformation}
Assume $p\geq2$ and $q\geq2p$. Let $c>0$. Then, for any $\rho\in(0,\rho_{0}(c))$, there exists a $\epsilon =\epsilon(c,\rho)>0$ and an odd continuous map
$\varphi: \, I^{c+\epsilon}\setminus A_{c,\rho} \longrightarrow I^{c-\epsilon}$ such that $\varphi|_{D}=id_{D}$.
\end{lem}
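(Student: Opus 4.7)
My plan is the classical deformation-lemma construction (cf.~\cite{W}), with Theorem~\ref{Cerami} replacing the Palais--Smale condition and Lemma~\ref{lemma9}(2) supplying the crucial geometric separation $A_{c,\rho}\cap D=\emptyset$.

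\emph{Step 1 (uniform weighted-gradient bound).} I would first show that there exist $\epsilon_{1}\in(0,c/2)$ and $\delta>0$ such that
$$(1+\|u\|_{X})\,\|I'(u)\|_{X'}\geq\delta\qquad\text{whenever }|I(u)-c|\leq\epsilon_{1}\text{ and }u\notin A_{c,\rho/2}.$$
If this fails, a violating sequence $\{u_{n}\}$ is a Cerami sequence at level $c$; Theorem~\ref{Cerami} produces $x_{n}\in\mathbb{Z}^{2}$ with $x_{n}\ast u_{n}\to u^{\ast}\in K_{c}$ strongly in $X$, hence also in $H^{1}$. Since the $H^{1}$-norm is $\mathbb{Z}^{2}$-translation invariant, so is $A_{c,\rho/2}$ (this is Lemma~\ref{lemma9}(1)), and therefore $x_{n}\ast u_{n}\notin A_{c,\rho/2}$ for every $n$, contradicting $x_{n}\ast u_{n}\to u^{\ast}\in K_{c}\subset A_{c,\rho/2}$.

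\emph{Step 2 (odd pseudo-gradient and cutoffs).} On the open set $\tilde S:=\{I'\ne 0\}\subset X$, by the standard partition-of-unity argument on this paracompact metric space I would build a locally Lipschitz pseudo-gradient $V\colon\tilde S\to X$ with
$$\|V(u)\|_{X}\leq 1+\|u\|_{X},\qquad \langle I'(u),V(u)\rangle\geq\tfrac{1}{2}\|I'(u)\|_{X'}(1+\|u\|_{X}),$$
made odd by symmetrization $V\mapsto\tfrac{1}{2}(V(u)-V(-u))$ (using $I(-u)=I(u)$ and $I'(-u)=-I'(u)$). I then introduce an even locally Lipschitz cutoff $\chi\colon X\to[0,1]$ with $\chi\equiv 0$ on $A_{c,\rho/2}$ and $\chi\equiv 1$ off $A_{c,3\rho/4}$, built from the $H^{1}$-distance to $K_{c}$ (which is Lipschitz on $X$ because $X\hookrightarrow H^{1}$ is continuous), together with a real cutoff $\psi\colon\mathbb{R}\to[0,1]$ locally Lipschitz with $\psi\equiv 0$ on $(-\infty,c-2\epsilon]$ and $\psi\equiv 1$ on $[c-\epsilon,+\infty)$, for a small $\epsilon\in(0,\epsilon_{1})$ to be fixed. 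The vector field $W(u):=-\chi(u)\psi(I(u))V(u)$ (extended by $0$ outside $\tilde S$) is odd, locally Lipschitz, and linearly bounded $\|W(u)\|_{X}\leq 1+\|u\|_{X}$. Its flow $\eta(t,u)$ therefore exists globally in $X$ by Gronwall's inequality, and Step~1 yields the uniform energy-decay estimate $\frac{d}{dt}I(\eta)\leq-\tfrac{\delta}{2}\chi(\eta)\psi(I(\eta))$ throughout the good region.

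\emph{Step 3 (definition and verification of $\varphi$).} I set $\varphi(u):=\eta(T,u)$ for a time $T=T(c,\rho,\epsilon,\delta)$ chosen below. Oddness is inherited from $W$. The identity $\varphi|_{D}=\mathrm{id}_{D}$ follows because on $D=I^{0}$ one has $I(u)\leq 0<c-2\epsilon$, so $\psi(I(u))=0$ and hence $W\equiv 0$ along the (trivial) trajectory. For $u\in I^{c+\epsilon}\setminus A_{c,\rho}$, either $I(\eta(t,u))$ drops below $c-\epsilon$ at some time $t\leq T$ (and stays there by monotonicity), or $I(\eta(t,u))\in[c-\epsilon,c+\epsilon]$ throughout $[0,T]$, forcing $\psi(I(\eta))\equiv 1$; in the latter case one splits $[0,T]$ into the portion on which $\eta\notin A_{c,3\rho/4}$ (where $\chi=1$ and the energy drops at rate $\delta/2$) and the portion on which $\eta\in A_{c,3\rho/4}\setminus A_{c,\rho/2}$. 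The standard annulus argument---the trajectory cannot re-enter $X\setminus A_{c,\rho}$ from $A_{c,3\rho/4}$ without first traversing $H^{1}$-distance at least $\rho/4$, and $H^{1}$-arclength is controlled by $X$-arclength along $\eta$---then yields, for $T$ large enough, a total energy drop strictly greater than $2\epsilon$, hence $\varphi(u)\in I^{c-\epsilon}$. Continuity of $\varphi$ is inherited from continuous dependence of the ODE on the initial data.

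The main obstacle I expect is the quantitative accounting in Step~3: the $X$-speed $\|\dot\eta\|_{X}\leq 1+\|\eta\|_{X}$ is only linearly bounded in $\|\eta\|_{X}$, so a trajectory starting at large $\|u\|_{X}$ can, in principle, traverse the $H^{1}$-annulus between $\partial A_{c,\rho}$ and $\partial A_{c,3\rho/4}$ very rapidly, shortening the window during which energy descent is guaranteed. Weighting the pseudo-gradient with the factor $(1+\|u\|_{X})$ is precisely what matches the weighted Cerami bound $\|I'\|_{X'}\geq\delta/(1+\|u\|_{X})$ and yields a descent rate $\delta/2$ that is $\|u\|_{X}$-independent; reconciling this $\|u\|_{X}$-independent rate with the $\|u\|_{X}$-dependent $H^{1}$-speed via a careful tracking of arclength along $\eta$ is the most delicate part of the argument, and fixes the admissible range of $\epsilon$ and the required size of $T$.
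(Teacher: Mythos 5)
Your Step 1 (the uniform bound $(1+\|u\|_X)\|I'(u)\|_{X'}\geq\delta$ away from $A_{c,\rho/2}$ in a neighborhood of level $c$) is essentially the same contradiction argument the paper carries out in \eqref{3+8}--\eqref{3+13}, and it is correct. However, the paper does not then reconstruct the pseudo-gradient flow by hand as you attempt in Steps 2--3: it fixes $\varepsilon=\delta^{2}<c/2$, verifies the precise hypothesis of the Cerami-adapted quantitative deformation lemma (Lemma 2.6 in Li--Wang \cite{LW}) on the $X$-neighborhood $S_{2\delta}$ of $S=X\setminus A_{c,\rho}$, and then cites that lemma as a black box to obtain the flow $\eta$ with properties 1)--5). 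Since $D\cap I^{-1}([c-2\varepsilon,c+2\varepsilon])=\emptyset$, property 1) gives $\varphi|_{D}=\mathrm{id}_{D}$ immediately.

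The gap in your Step 3 is precisely at the point you flag as ``the most delicate part,'' and it is not merely delicate---the annulus argument as you describe it is incomplete. In your case analysis you split $[0,T]$ into the portion where $\eta\notin A_{c,3\rho/4}$ and the portion where $\eta\in A_{c,3\rho/4}\setminus A_{c,\rho/2}$, but you never treat the third possibility, $\eta\in A_{c,\rho/2}$. There $\chi\equiv 0$, the vector field $W$ vanishes, and the trajectory freezes; if the energy at the moment of entry is still in $(c-\epsilon,c+\epsilon]$, then $\varphi(u)=\eta(T,u)$ lies outside $I^{c-\epsilon}$, and the conclusion fails. To rule this out one must show that the trajectory cannot cross the $H^{1}$-annulus $A_{c,\rho}\setminus A_{c,\rho/2}$ without first losing at least $2\epsilon$ in energy. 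With your pseudo-gradient normalization the $X$-speed of $\eta$ is only bounded by $1+\|\eta\|_{X}$, so the time to traverse an $H^{1}$-annulus of thickness $\rho/2$ can be as small as $\sim\rho/(1+\|u\|_{X})$, during which the guaranteed energy drop $\sim\delta\rho/(1+\|u\|_{X})$ is not uniformly bounded below. No choice of $\epsilon$ or $T$ independent of $\|u\|_{X}$ fixes this, because $\|u\|_{X}$ is not bounded on $I^{c+\epsilon}\setminus A_{c,\rho}$. This is exactly the technical difficulty that the Li--Wang deformation lemma is engineered to absorb (through the specific coupling $\varepsilon=\delta^{2}$ and the $X$-distance-based set $S_{2\delta}$ rather than $H^{1}$-distance-based neighborhoods of $K_{c}$), and merely ``tracking arclength along $\eta$'' does not close it. You should either invoke the Li--Wang lemma directly, as the paper does, or supply the normalization of the pseudo-gradient (and the correct coupling of $\epsilon$, $\delta$, and the neighborhood) that makes the annulus estimate $\|u\|_{X}$-uniform.
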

\begin{proof}
We fix $\rho\in(0,\rho_{0}(c))$ ($\rho_{0}(c)$ is the same as in Lemma \ref{lemma9}) arbitrarily, and consider the sets
\begin{equation}\label{3+7}
  S:=X \setminus A_{c,\rho} \quad \text{and} \quad \widetilde{S_{\delta}}:= \{u\in X: \, \|u-v\|_{H^1}\leq\delta \,\,\, \text{for some} \,\, v\in S\}
\end{equation}
for $\delta>0$. We are to show that, for $\delta>0$ small enough,
\begin{equation}\label{3+8}
 \|I'(u)\|_{X'}(1+\|u\|_X )\geq 8\delta, \quad \forall u \in \widetilde{S_{2\delta}}\cap I^{-1}([c-2\delta^2 ,c+2\delta^2]).
\end{equation}
Suppose on contrary that there exist sequences $\delta_{n}>0$ ($n\in\mathbb{N}$) and $\{u_{n}\}\in\widetilde{S_{2\delta_{n}}}$ such that $\delta_{n}\rightarrow0$ as $n\rightarrow\infty$ and
\begin{equation}\label{3+9}
  \|I'(u_{n})\|_{X'}(1+\|u_{n}\|_X)<8\delta_{n}, \qquad c-2\delta_{n}^2\leq I(u_{n})\leq c+2\delta_{n}^2
\end{equation}
for every $n\in\mathbb{N}$. By Theorem \ref{Cerami}, after passing to a subsequence, there exist points $x_{n}\in\mathbb{Z}^{2}$ ($n\in\mathbb{N}$) and $u\in K_{c}$ such that
\begin{equation}\label{3+10}
  \|x_{n}*u_{n}-u\|_{X}\rightarrow 0, \qquad \text{as} \,\, n\rightarrow\infty.
\end{equation}
By definition \eqref{3+7}, there is a sequence $\{v_{n}\}\subset S$ such that
\begin{equation}\label{3+11}
  \|u_{n}-v_{n}\|_{H^{1}(\mathbb{R}^{2})}\leq 2\delta_{n}, \qquad \forall \, n\in\mathbb{N}.
\end{equation}
Then it follows that
\begin{eqnarray}\label{3+12}
\left\|x_{n}*v_{n}-u\right\|_{H^{1}}&\leq&\left\|x_{n} * v_{n}-x_{n} * u_{n}\right\|_{H^{1}}+\left\|x_{n} * u_{n}-u\right\|_{H^{1}} \\
\nonumber &\leq&\left\|v_{n}-u_{n}\right\|_{H^{1}}+\left\|x_{n}*u_{n}-u\right\|_{H^{1}} \rightarrow 0, \qquad \text { as } n \rightarrow \infty.
\end{eqnarray}
Since, by $\mathbb{Z}^{2}$-translations invariance, $x_{n}*v_{n}\in S$, \eqref{3+12} implies that
\begin{equation}\label{3+13}
  \|u-v\|_{H^{1}(\mathbb{R}^{2})}\geq\rho, \qquad \forall \, v\in K_{c},
\end{equation}
which is absurd since $u\in K_{c}$. Thus \eqref{3+8} holds for $\delta>0$ sufficiently small. It is clear that \eqref{3+8} still holds if $\widetilde{S_{2\delta}}$ is replaced by the subset
\begin{equation}\label{3+14}
  S_{2\delta}:= \{u\in X: \, \|u-v\|_{X}\leq2\delta \,\,\, \text{for some} \,\, v\in S\}.
\end{equation}

Now, we may fix a $\delta>0$ small enough s.t. \eqref{3+8} holds and
\begin{equation}\label{3+15}
  \varepsilon:=\delta^2<\frac{c}{2}.
\end{equation}
Since $I(u)$ is even w.r.t. $u$, by Lemma 2.6 in \cite{LW}, there exists a continuous function $\eta:[0,1] \times X \longrightarrow X $ such that
\begin{align*}
1) \quad & \eta(t,u)=u \quad \text{if} \: t=0 \: \text{or} \: u \notin  S_{2\delta }  \cap  I^{-1}([c-2\varepsilon ,c+2\varepsilon]); \\
2) \quad &\eta(1,I^{c+\varepsilon}\cap S)\subset I^{c-\varepsilon}; \\
3) \quad & \eta(t,\cdot) \,\, \text{is a homeomorphism of} \,\, X, \,\, \forall \, t\in[0,1]; \\
4) \quad & t\mapsto I(\eta(t,u)) \;\text{is nonincreasing for all}\; u \in X; \\
5) \quad & \eta(t,-u)=-\eta(t,u), \,\, \forall \, t\in[0,1], u\in X.
\end{align*}
As a consequence, since $ D \cap I^{-1}([c-2\varepsilon ,c+2\varepsilon])=\emptyset$, if we let $\varphi(u):=\eta(1,u)$, then $\varphi$ is an odd continuous map satisfying
\begin{equation}\label{3+16}
  \varphi: \, I^{c+\epsilon}\setminus A_{c,\rho}\longrightarrow I^{c-\epsilon} \quad \text{and} \quad \varphi|_{D}=id_{D}.
\end{equation}
This completes our proof of Lemma \ref{deformation}.
\end{proof}

\section{Proof of Theorem \ref{thm1}}
In this section, using ideas from \cite{CW}, we will derive the existence of ground state solutions to Schr\"{o}dinger-Newton equations \eqref{S-N} by constructing critical value for $I$, and hence complete the proof of Theorem \ref{thm1}.

In the following, we assume $p\geq2$, $q\geq2p$, $a\in C(\mathbb{R}^{2})$ and $\mathbb{Z}^{2}$-periodic with $\inf_{\mathbb{R}^{2}}a>0$. We use $\gamma(A)\in \mathbb{N}\cup\{0,\infty\}$ to denote the Krasnoselski genus of a closed and symmetric subset $A\subset X$ (for definition and properties of $\gamma$, please refer to Chapter II. 5 in \cite{S}, see also \cite{CW}). Next, let us recall the notion of relative genus $\gamma_D(A)$ for closed and symmetric subset $A \subset X$.
\begin{defn}[relative genus]
Let $D \subset Y$ be a closed symmetric subset of $X$. We define the genus of $Y$ relative to $D$, denoted by $\gamma_D(A)$, as the smallest number $k$ such that $Y$ can be covered by closed and symmetric subsets $U,\,V$ with the following properties:
\begin{align*}
1) \quad & D \subset U, \, \text{and there exists an odd continuous map} \, \chi: \, U \to D \,\, \text{such that} \,\, \chi(u)=u, \,\, \forall \, u\in D; \\
2) \quad & \gamma(V)\leq k.
\end{align*}
If no such covering exists, we set $ \gamma_D(A):=\infty$.
\end{defn}
For properties of relative genus $\gamma_{D}$, please refer to \cite{CP}, see also \cite{CW}.
\begin{lem}\label{property}(\cite{CP})
Let $D$, $Y$ and $Z$ be closed symmetric subsets of $X$ with $D\subset Y$. Then we have: \\
(1) (Subadditivity) \, $\gamma_{D}(Y\cup Z)\leq\gamma_{D}(Y)+\gamma_{D}(Z)$. \\
(2) \, If $D\subset Z$, and if there exists an odd continuous map $\varphi: \, Y\rightarrow Z$ with $\varphi(u)=u$ for any $u\in D$, then $\gamma_{D}(Y)\leq \gamma_{D}(Z)$.
\end{lem}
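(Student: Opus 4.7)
For part (2), we pull back a relative covering of $Z$ via $\varphi$. Let $n := \gamma_D(Z)$ and choose closed symmetric $U_Z, V_Z$ with $Z \subset U_Z \cup V_Z$, $D \subset U_Z$, an odd continuous retraction $\chi_Z : U_Z \to D$, and $\gamma(V_Z) \leq n$. Setting $U := \varphi^{-1}(U_Z)$ and $V := \varphi^{-1}(V_Z)$, continuity and oddness of $\varphi$ make both sets closed and symmetric, and $Y \subset \varphi^{-1}(Z) \subset U \cup V$. Since $\varphi|_D = \mathrm{id}_D$ and $D \subset U_Z$, we have $D \subset U$, and $\chi := \chi_Z \circ (\varphi|_U) : U \to D$ is an odd continuous retraction of $U$ onto $D$. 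The restriction $\varphi|_V : V \to V_Z$ is odd and continuous, so the standard mapping monotonicity of the Krasnoselski genus yields $\gamma(V) \leq \gamma(V_Z) \leq n$, giving $\gamma_D(Y) \leq n = \gamma_D(Z)$.

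For the subadditivity in part (1), set $m := \gamma_D(Y)$ and $n := \gamma_D(Z)$, and pick realizing coverings $Y \subset U_Y \cup V_Y$ and $Z \subset U_Z \cup V_Z$ together with odd continuous retractions $\chi_Y : U_Y \to D$, $\chi_Z : U_Z \to D$ satisfying $\gamma(V_Y) \leq m$ and $\gamma(V_Z) \leq n$. The natural candidates are $V := V_Y \cup V_Z$ and $U := U_Y \cup U_Z$: both are closed and symmetric, $Y \cup Z \subset U \cup V$, $D \subset U$, and $\gamma(V) \leq m+n$ by subadditivity of the ordinary Krasnoselski genus. The whole lemma is therefore reduced to producing a single odd continuous retraction $\chi : U \to D$.

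The main obstacle is exactly this gluing of $\chi_Y$ and $\chi_Z$, since off $D$ the two retractions need not agree on the overlap $U_Y \cap U_Z$. The plan is to interpolate them via a symmetric Urysohn-type cut-off $\lambda : U \to [0,1]$ with $\lambda \equiv 1$ on $U_Y \setminus U_Z$ and $\lambda \equiv 0$ on $U_Z \setminus U_Y$, producing an odd continuous $X$-valued map $\lambda\chi_Y + (1-\lambda)\chi_Z$ on $U$ that restricts to the identity on $D$ (both retractions being the identity there). To land back in $D$, one postcomposes with an odd continuous retraction of a neighborhood of $D$ in $X$ onto $D$, which is available under the standard structural hypothesis on $D$ used in the Clark--Pucci setting of \cite{CP} (and in our application to $D = I^0$ is supplied by the flow underlying Lemma~\ref{deformation}). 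The resulting $\chi$ is the desired odd continuous retraction $U \to D$, and the pair $(U, V)$ realizes $\gamma_D(Y \cup Z) \leq m + n$.
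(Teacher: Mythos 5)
The paper itself gives no proof of this lemma; it cites Clapp--Puppe \cite{CP} (not ``Clark--Pucci''), so there is no paper argument to compare against. Your treatment of part (2) is correct and is the standard pullback argument: $U=\varphi^{-1}(U_Z)$ and $V=\varphi^{-1}(V_Z)$ are closed and symmetric by continuity and oddness of $\varphi$, they cover $Y=\varphi^{-1}(Z)$, $D\subset U$ because $\varphi|_D=\mathrm{id}_D$, $\chi_Z\circ\varphi|_U$ is an odd retraction of $U$ onto $D$, and monotonicity of the ordinary genus under the odd continuous map $\varphi|_V$ gives $\gamma(V)\leq\gamma(V_Z)$.

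Your part (1), however, has a genuine gap and cannot be repaired within the hypotheses as written. First, the cut-off $\lambda$ you want need not exist: the sets $U_Y\setminus U_Z$ and $U_Z\setminus U_Y$ can have non-disjoint closures (both can accumulate at the same point of $U_Y\cap U_Z$), in which case requiring $\lambda\equiv 1$ on the first and $\lambda\equiv 0$ on the second is incompatible with continuity on $U$. Second, even when $\lambda$ exists, $\lambda\chi_Y+(1-\lambda)\chi_Z$ is only $X$-valued; $D$ is not assumed convex, so the interpolation leaves $D$, and since the lemma assumes only that $D$ is closed and symmetric there is no odd neighborhood retraction onto $D$ to postcompose with --- and even granting one, nothing guarantees the interpolated map stays inside the relevant neighborhood of $D$. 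By contrast, a simple and rigorous argument yields the slightly weaker bound $\gamma_D(Y\cup Z)\leq\gamma_D(Y)+\gamma(Z)$: keep $U:=U_Y$ and take $V:=V_Y\cup Z$; then $Y\cup Z\subset U\cup V$, $\chi_Y$ retracts $U$ onto $D$, and $\gamma(V)\leq\gamma(V_Y)+\gamma(Z)$ by subadditivity of the ordinary Krasnoselskii genus. Obtaining $\gamma_D(Z)$ on the right, as the lemma states, requires the actual machinery of \cite{CP}, which your gluing does not reproduce. (In fairness, only part (2) is actually invoked in this paper, in the proof of Lemma~\ref{lemma10}; part (1) is never used.)
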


Now we construct various possible critical values by
\begin{equation}\label{4-0}
  c_{1}:=\inf\{c>0: \, \gamma_D(I^c)\geq 1\}, \qquad  c_{mm}:=\inf_{u\in X\setminus\{0\}}\sup_{t\in\mathbb{R}}I(tu),
\end{equation}
\begin{equation}\label{4-1}
  c_{g}:=\inf\{I(u): \, u\in X\setminus\{0\}, \, I'(u)=0\}, \qquad c_{\mathcal{N}}=\inf_{u\in\mathcal{N}}I(u),
\end{equation}
where the Nehari manifold is given by
\begin{equation}\label{4-2}
  \mathcal{N}=\{u\in X\setminus\{0\}: \, \left\langle I'(u),u\right\rangle=0\}.
\end{equation}

We have the following crucial lemma on properties of the above possible critical values.
\begin{lem}\label{lemma10}
Under the same assumptions as in Theorem \ref{thm1}, the following properties hold: \\
(1) \, $0<c_{1}=c_g=c_{\mathcal{N}}=c_{mm}<+\infty$. \\
(2) \, $c_1$ is a critical value of energy functional $I$.
\end{lem}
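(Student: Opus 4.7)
The plan is to establish the chain of inequalities $c_{\mathcal{N}} \le c_g \le c_1 \le c_{mm} = c_{\mathcal{N}}$, so all four quantities coincide, and then to show that $c_1$ is attained (which simultaneously yields Part (2) and the inequality $c_g \le c_1$ that closes the chain).

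First I would establish $0 < c_{mm} < \infty$ and the identity $c_{mm} = c_{\mathcal{N}}$. For finiteness, pick $u_0 \in C_c^\infty(\mathbb{R}^2) \setminus \{0\}$ with sufficiently small support so that $V_0(u_0) < 0$ (the logarithmic kernel is negative where $|x-y| < 1$); Remark \ref{rem3} then gives $\sup_{t \in \mathbb{R}} I(tu_0) < \infty$, whence $c_{mm} < \infty$. For positivity, Lemma \ref{lemma2} gives $I > 0$ on the sphere $\{\|u\|_{H^1} = \alpha\}$, so every nonzero $u$ satisfies $\sup_t I(tu) \ge I(\alpha u/\|u\|_{H^1}) \ge \inf_{\|v\|_{H^1} = \alpha} I(v) > 0$. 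For $c_{mm} = c_{\mathcal{N}}$ I would use Lemma \ref{lemma3}: any $u \in \mathcal{N}$ satisfies $\varphi'_u(1) = \langle I'(u), u \rangle = 0$, which rules out case (2); hence case (1) holds, $t = 1$ is the unique maximum of $\varphi_u$, and $I(u) = \sup_t I(tu) \ge c_{mm}$, so $c_{\mathcal{N}} \ge c_{mm}$. Conversely, for any $u \ne 0$ with $\sup_t I(tu) < \infty$ we are in case (1), the maximum is attained at some $t_u u \in \mathcal{N}$, yielding $\sup_t I(tu) \ge c_{\mathcal{N}}$, so $c_{mm} \ge c_{\mathcal{N}}$. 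The inequality $c_g \ge c_{\mathcal{N}}$ is immediate since every nontrivial critical point lies in $\mathcal{N}$.

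The key ingredient is the genus bound $c_1 \le c_{mm}$. Fix any $c > c_{mm}$, choose $\epsilon \in (0, c - c_{mm})$, and pick $u_0 \in X \setminus \{0\}$ with $\sup_t I(tu_0) < c_{mm} + \epsilon$; since the sup is finite the first alternative in Lemma \ref{lemma3} holds, so there is $T > 0$ with $I(Tu_0) < 0$, i.e.\ $Tu_0 \in D$. The straight-line path $\sigma(s) = s u_0$, $s \in [0, T]$, lies in $I^c$ (since $I(\sigma(s)) \le \sup_t I(tu_0) < c$) and connects $0 \in D$ to $Tu_0 \in D$. Suppose, toward a contradiction, that $\gamma_D(I^c) = 0$; then there exists an odd continuous $\chi : I^c \to D$ with $\chi|_D = \mathrm{id}_D$, and $\chi \circ \sigma : [0, T] \to D$ is a continuous path in $D$ from $0$ to $Tu_0$. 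However, Lemma \ref{lemma2} forces $I > 0$ on the punctured ball $\{0 < \|u\|_{H^1} \le \alpha\}$, so $D \subset \{0\} \cup \{\|u\|_{H^1} > \alpha\}$ is a disjoint clopen splitting of $D$ in which $\{0\}$ is the connected component of $0$; thus $\chi \circ \sigma \equiv 0$, contradicting $\chi(Tu_0) = Tu_0 \ne 0$. Hence $\gamma_D(I^c) \ge 1$ for every $c > c_{mm}$, giving $c_1 \le c_{mm}$. A standard deformation of small sublevel sets to $D$ (using Lemma \ref{lemma2} and the fact that $0$ is a strict local minimum of $I$) shows $c_1 > 0$.

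Finally, for Part (2) I would argue by contradiction: if $K_{c_1} = \emptyset$ then $A_{c_1, \rho} = \emptyset$ for all $\rho > 0$, and Lemma \ref{deformation} (applied with some $\epsilon \in (0, c_1)$) supplies an odd continuous map $\varphi : I^{c_1 + \epsilon} \to I^{c_1 - \epsilon}$ with $\varphi|_D = \mathrm{id}_D$. Lemma \ref{property}(2) then forces $\gamma_D(I^{c_1 + \epsilon}) \le \gamma_D(I^{c_1 - \epsilon}) = 0$, contradicting $\gamma_D(I^{c_1 + \epsilon}) \ge 1$ (which follows from the definition of $c_1$ and the $\gamma_D$-monotonicity of sublevel sets via inclusion). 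Hence $c_1$ is a critical value, which yields both Part (2) and $c_g \le c_1$, closing the chain. The main obstacle is the genus bound $c_1 \le c_{mm}$, and its crux is the topological observation, supplied by Lemma \ref{lemma2}, that $\{0\}$ is an isolated connected component of $D$; once this is in hand, any putative odd retraction of $I^c$ onto $D$ is obstructed by the mountain-pass path built from Lemma \ref{lemma3}.
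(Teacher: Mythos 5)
Your overall strategy (establishing the chain of equalities among the four candidate critical levels, then applying the deformation lemma) mirrors the paper's, and your argument that $\gamma_D(I^c)\geq 1$ for every $c>c_{mm}$ is a clean variant of the paper's: you replace the paper's intermediate-value argument on $\|\psi(\cdot)\|_{H^1}$ over the line $W=\mathbb{R}u$ by the observation that Lemma~\ref{lemma2} makes $\{0\}$ an isolated clopen component of $D$, which any odd retraction of $I^c$ onto $D$ would have to preserve against the connected path $\chi\circ\sigma$. That piece is correct and arguably cleaner than the paper's formulation.

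However there is a genuine gap: the claim ``a standard deformation of small sublevel sets to $D$ shows $c_1>0$'' is not an argument, and it cannot be replaced by an appeal to $0$ being a strict local minimum. The issue is that $c_1>0$ is equivalent to producing, for some $c>0$, an odd continuous $\chi:I^c\to D$ fixing $D$ pointwise, and the existence of such a retraction is precisely the nontrivial content that the paper supplies in Step~2 of its proof. There, one observes that for $c<c_{\mathcal{N}}$ the sublevel set $I^c$ avoids the Nehari manifold, so $I^c\subset\mathcal{N}^+\cup\mathcal{N}^-\cup\{0\}$ with $\mathcal{N}^{\pm}=\{u:\pm\langle I'(u),u\rangle>0\}$, and one defines $\chi(u)=\tau(u)u$ on $\mathcal{N}^-$ (where $\tau(u)=\inf\{t\geq1:I(tu)\leq0\}$, well-defined and continuous by Lemma~\ref{lemma3}) and $\chi(u)=0$ on $\mathcal{N}^+\cup\{0\}$; continuity of this piecewise definition requires the observation, again from Lemma~\ref{lemma2}, that $B_\alpha(0)\setminus\{0\}\subset\mathcal{N}^+$. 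This yields the stronger statement $c_1\geq c_{\mathcal{N}}>0$. Without some version of this construction your proof is circular: you need $c_1>0$ to invoke Lemma~\ref{deformation} in Part~(2), you need Part~(2) to get $c_g\leq c_1$, and you need $c_g\leq c_1$ together with $c_{\mathcal{N}}\leq c_g$ and $c_1\leq c_{mm}=c_{\mathcal{N}}$ to conclude $c_1=c_{\mathcal{N}}>0$. Supplying the explicit retraction (equivalently, proving $c_1\geq c_{\mathcal{N}}$ directly) closes this gap and in fact renders the detour through Part~(2) unnecessary for establishing $c_1>0$.
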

\begin{proof}
We will prove Lemma \ref{lemma10} by the following five steps.

\emph{Step 1}. Since $u\in\mathcal{N}$ if and only if $\varphi'_{u}(1)=0$, it follows from Lemma \ref{lemma3} that $c_{\mathcal{N}}=c_{mm}$. As a consequence of Lemma \ref{lemma2}, we can also deduce $c_{\mathcal{N}}=c_{mm}>0$. By definition, it is clear that $c_{g}\geq c_{\mathcal{N}}$, thus we have $0<c_{mm}=c_{\mathcal{N}}\leq c_{g}$.

\emph{Step 2}. We will show that $c_1\geq c_\mathcal{N}$. Suppose it is false, then we can take a number $c>0$ such that $c_1<c<c_\mathcal{N}$. Defining the sets
\begin{equation}\label{4-3}
  \mathcal{N}^{+}:=\{u\in X: \, \big\langle I'(u),u \big\rangle>0\} \quad \text{and} \quad \mathcal{N}^{-}:=\{u\in X: \, \big\langle I'(u),u \big\rangle<0\}
\end{equation}
and the function
\begin{equation}\label{4-4}
  \tau: \, \mathcal{N}^- \to [1,+\infty), \qquad  \tau(u):=\inf\{t\geq 1: \, I(tu)\leq 0\}.
\end{equation}
By Lemma \ref{lemma3}, $\tau$ is well-defined and it is an even and continuous function on $\mathcal{N}^-$. Since $c>0$ and $c_1<c<c_{\mathcal{N}}$, the closed symmetric subset $I^c\subset\mathcal{N}^-\bigcup\mathcal{N}^+\bigcup\{0\}$. Now we define an odd map
\begin{equation}\label{4-5}
  \chi: \, I^c\to X, \qquad \chi(u)=\tau(u)u, \quad \text{if} \,\, u \in\mathcal{N}^-; \qquad \chi(u)=0, \quad \text{if} \,\, u\in\mathcal{N}^+\bigcup\{0\}.
\end{equation}
Since $\mathcal{N}^{\pm}$ are open subsets and Lemma \ref{lemma2} yields that $B_{\alpha}(0)\setminus\{0\}\subseteq \mathcal{N}^{+}$ for some $\alpha>0$, thus $\chi$ is well-defined and it is a continuous map on $I^{c}$. If $u\in D$, then $I(u)\leq 0$, Lemma \ref{lemma3} implies that $u\in\mathcal{N}^-$ and $\tau(u)=1$, and hence we have $\chi|_D=id_D$, which gives us $\gamma_D(I^c)=0$ by the definition of the relative genus. However, since $c>c_1$, by the definition of $c_{1}$ and Lemma \ref{property}, one has $\gamma_D(I^c)\geq\gamma_D(I^{c_{1}})\geq1$, which is a contradiction. Thus we must have $c_1\geq c_\mathcal{N}$.

\emph{Step 3}. We will show $c_1\leq c_{mm}<+\infty$. For arbitrarily given $u\in X\setminus\{0\}$ satisfying $0<\sup_{W}I<+\infty$ with $W:=\mathbb{R}u$, by Lemma \ref{lemma3}, we can choose a $R>0$ large enough such that $I(v)\leq 0$ for all $v\in W$ with $\|v\|_{H^{1}}\geq R$, i.e., $\{v\in W: \, \|v\|_{H^{1}}\geq R\}\subset D$. In order to prove $c_1\leq\sup_{W}I$, by definition of $c_{1}$ and Lemma \ref{property}, we only need to show $\gamma_D(W \bigcup D)\geq 1$, since we have $I(v)\leq\sup_{W}I$ on $W\bigcup D$. Suppose on contrary that $\gamma_D(W\bigcup D)=0$, by the definition of the relative genus, there exists an odd continuous map $\psi: \, W\bigcup D\to D$ such that $\psi(v)=v$ for every $v\in D$. Since $\psi(0)=0$, $\psi(v)=v$ for any $v\in W$ with $\|v\|_{H^{1}}\geq R$, by continuity, there exists a $v_{\alpha}\in W$ such that $\|\psi(v_{\alpha})\|_{H^{1}}=\alpha$, where $\alpha>0$ is the same constant given in Lemma \ref{lemma2}. Hence, by Lemma \ref{lemma2}, we get $\psi(v_{\alpha})\notin D$, which is a contradict with definition of the map $\psi: \, W\bigcup D\to D$. Thus we arrive at $c_1\leq c_{mm}$.

Next, we show that $c_{mm}<+\infty$. To this end, we choose arbitrarily a function $u\in X\setminus\{0\}$ with $supp \, u\subseteq B_{\frac{1}{4}}(0)\subset \mathbb{R}^2$. Then, note that $|x-y|<1$ for every $x,y\in B_{\frac{1}{4}}(0)$, we have
\begin{equation}\label{4-6}
  V_0(u)=\int_{\mathbb{R}^{2}}\int_{\mathbb{R}^{2}}\log(|x-y|)|u|^p(x)|u|^p(y)dxdy<0.
\end{equation}
It follows easily from the proof of Lemma \ref{lemma3} (1) that
\begin{equation}\label{4-7}
  c_{mm}\leq \sup_{t\in\mathbb{R}}I(tu)<+\infty.
\end{equation}

\emph{Step 4}. Now we will prove (2). Assume $c_1$ is not a critical value, then $K_{c_1}=\emptyset$, and hence $ A_{c_1,\rho}=\emptyset $ for every $\rho>0$. Thus the quantitative deformation lemma (Lemma \ref{deformation}) implies that, there exist a $0<\varepsilon<\frac{c_{1}}{2}$ and an odd continuous map $\varphi: \, I^{c_1+\varepsilon}\longrightarrow I^{c_1-\varepsilon}$ s.t. $\varphi|_D=id_{D}$. This combining with Lemma \ref{property} yield that $\gamma_{D}(I^{c_1-\varepsilon})\geq1$ , which leads to a contradiction with the definition of $c_{1}$. This finishes our proof of (2).

\emph{Step 5}. By Steps 1-3, we have already shown $0<c_{1}=c_{\mathcal{N}}=c_{mm}<+\infty$ and $c_{1}=c_{\mathcal{N}}=c_{mm}\leq c_{g}$. Then, by Step 4 and the definition of $c_{g}$, we have $c_{g}\leq c_{1}$, thus we finally arrive at $0<c_{1}=c_{g}=c_{\mathcal{N}}=c_{mm}<+\infty$. This completes our proof of (1) and Lemma \ref{lemma10}.
\end{proof}

Now we are ready to complete our proof of Theorem \ref{thm1}.
\begin{proof}[Proof of Theorem \ref{thm1} (completed)]
By Lemma \ref{lemma10}, we derive existence of critical points $\pm u\in X\setminus\{0\}$ of $I$ such that
\begin{equation}\label{4-8}
  I(u)=c_g \quad \text{and} \quad c_g=c_{1}=\inf_{\mathcal{N}}I=\inf_{u\in X\setminus\{0\}}\sup_{t\in\mathbb{R}}I(tu)>0,
\end{equation}
and hence $\pm u$ are ground state solutions for the Schr\"{o}dinger-Newton equation \eqref{S-N}.

Next, let $u\in\mathcal{N}$ be an arbitrary minimizer of $I|_\mathcal{N}$, i.e., $I(u)=\inf_{\mathcal{N}}I$. We are to show that $u$ is a critical point of $I$. If not, suppose there exists $v \in X$ s.t. $\big\langle I'(u),v\big\rangle<0$. Since $I'$ is continuous in $X$, there exist $\eta>0$ and $\delta>0$ small enough such that
\begin{equation}\label{4-9}
  \big\langle I'(t(u+sv)),v \big\rangle<0, \qquad \forall \,\, t\in[1-\delta,1+\delta], \,\, s\in[-\eta,\eta].
\end{equation}
Since $u\in\mathcal{N}$, by Lemma \ref{lemma3}, we can deduce
\begin{equation}\label{4-10}
  \big\langle I'((1-\delta)u),(1-\delta)u\big\rangle>0>\big\langle I'((1+\delta)u),(1+\delta)u\big\rangle.
\end{equation}
Then, by continuity of $I'$, there exists a $\hat{s}\in(0,\eta)$ small enough s.t.
\begin{equation}\label{4-11}
  \big\langle I'((1-\delta)(u+\hat{s}v)),(1-\delta)(u+\hat{s}v) \big\rangle>0>\big\langle I'((1+\delta)(u+\hat{s}v)),(1+\delta)(u+\hat{s}v)\big\rangle,
\end{equation}
which implies that there exists some $\hat{t}\in(1-\delta,1+\delta)$ such that $\hat{t}(u+\hat{s}v)\in\mathcal{N}$. However, by Lemma \ref{lemma3} and \eqref{4-9}, we have
\begin{equation}\label{4-12}
  I(\hat{t}(u+\hat{s}v))-I(u)\leq I(\hat{t}(u+\hat{s}v))-I(\hat{t}u)=-\hat{t}\int_0^{\hat{s}}\big\langle I'(\hat{t}(u+sv)),v\big\rangle ds<0,
\end{equation}
which yields a contradiction with $I(u)=\inf_{\mathcal{N}}I$. Thus $u$ must be a critical point of $I$.

Finally, if $u\in\mathcal{N}$ is a minimizer of $I|_{\mathcal{N}}$, then $|u|\in\mathcal{N}$ is also a minimizer of $I|_{\mathcal{N}}$. Hence $|u|$ is a critical point of $I$ and satisfies the Euler-Lagrange equation \eqref{S-N}. By Lemma \ref{lemma1}, we have $|u|\in W^{2,r}(\mathbb{R}^{2})$ ($\forall \, 1\leq r<+\infty$) is a strong solution to elliptic equation of the form $-\Delta |u|+\zeta(x)|u|=0$ with $\zeta\in L_{loc}^{\infty}(\mathbb{R}^{2})$. Since $|u|\not\equiv0$, by Harnack inequality (see \cite{GT}), we have $|u|>0$ on $\mathbb{R}^{2}$ and hence $u$ does not change sign. This concludes our proof of Theorem \ref{thm1}.
\end{proof}

\section{Proof of Theorem \ref{thm3}}
In this section, we will prove Theorem \ref{thm3}. We first prove that mountain pass energy level $c_{mp}>0$ and there exists a critical point $u\in X\setminus\{0\}$ of $I$ such that $I(u)=c_{mp}$ (for the definition of $c_{mp}$, see \eqref{mp}). To this end, we need the following general minimax principle from Proposition 2.8 in \cite{LW}, which will lead to Cerami sequences instead of Palais-Smale sequences.

\begin{prop}[\cite{LW}]\label{prop1}
Assume $X$ is a Banach space. Let $M_{0}$ be a closed subspace of the metric space $M$ and $\Gamma_{0}\subset C(M_{0},X)$. Define
\begin{equation*}
  \Gamma:=\{\gamma\in C(M,X):\, \gamma|_{M_{0}}\in\Gamma_{0}\}.
\end{equation*}
If $\phi\in C^{1}(X,\mathbb{R})$ satisfies
\begin{equation*}
 +\infty>c:=\inf _{\gamma \in \Gamma} \sup _{t\in M}\phi(\gamma(t))>\sigma:=\sup _{\gamma_{0} \in \Gamma_{0}} \sup _{t\in M_{0}} \phi\left(\gamma_{0}(t)\right),
\end{equation*}
then, for every $\epsilon\in\left(0,\frac{c-\sigma}{2}\right)$, $\delta>0$ and $\gamma\in\Gamma$ with $\sup_{t\in M} \phi(\gamma(t)) \leqslant c+\epsilon$, there exists $u\in X$ such that \\
\noindent (i) $c-2\epsilon\leq \phi(u)\leq c+2\epsilon$, \\
\noindent (ii) $dist(u,\gamma(M))\leq 2\delta$, \\
\noindent (iii) $\left(1+\|u\|_{X}\right)\|\phi'(u)\|_{X'}\leq\frac{8\epsilon}{\delta}$.
\end{prop}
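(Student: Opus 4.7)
The plan is to derive this general minimax theorem by combining Ekeland's variational principle on the path space $\Gamma$ with a deformation argument whose pseudo-gradient flow incorporates the Cerami weight $(1+\|u\|_X)$.

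First I would endow $\Gamma$ with the uniform metric $d(\gamma_1,\gamma_2):=\sup_{t\in M}\|\gamma_1(t)-\gamma_2(t)\|_X$. Since $\Gamma$ is closed in $C(M,X)$, the pair $(\Gamma,d)$ is complete, and the functional $\Psi(\gamma):=\sup_{t\in M}\phi(\gamma(t))$ is lower semi-continuous, bounded below by $c$, and satisfies $\Psi(\gamma)\leq c+\epsilon$ for the given path $\gamma$. Applying Ekeland's variational principle to $(\Psi,\Gamma,d)$ with scaling parameter $\delta$ would yield $\tilde\gamma\in\Gamma$ with $\Psi(\tilde\gamma)\leq c+\epsilon$, $d(\tilde\gamma,\gamma)\leq\delta$, and the strict minimality
\[
\Psi(\eta)>\Psi(\tilde\gamma)-\tfrac{\epsilon}{\delta}\,d(\tilde\gamma,\eta)\qquad\forall\,\eta\in\Gamma\setminus\{\tilde\gamma\}.
\]
Next I would introduce the near-peak set $T:=\{t\in M:\phi(\tilde\gamma(t))\geq c-2\epsilon\}$. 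The hypothesis $\epsilon<(c-\sigma)/2$ combined with $\tilde\gamma|_{M_0}\in\Gamma_0$ forces $\phi(\tilde\gamma(t))\leq\sigma<c-2\epsilon$ on $M_0$, so $T$ is separated from $M_0$. The target is to find $t^{*}\in T$ such that $u:=\tilde\gamma(t^{*})$ satisfies (i)--(iii); conclusions (i) and (ii) will then follow immediately from $t^{*}\in T$ and $\|u-\gamma(t^{*})\|_X\leq\delta$.

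Conclusion (iii) would be proved by contradiction. Assume $(1+\|\tilde\gamma(t)\|_X)\|\phi'(\tilde\gamma(t))\|_{X'}>8\epsilon/\delta$ for every $t\in T$; by continuity this inequality persists on some open neighborhood $U\supset T$ disjoint from $M_0$. A standard pseudo-gradient construction yields a locally Lipschitz map $V:U\to X$ with $\|V(t)\|_X\leq 1$ and $\langle\phi'(\tilde\gamma(t)),V(t)\rangle\geq\tfrac{1}{2}\|\phi'(\tilde\gamma(t))\|_{X'}$. Choosing an Urysohn cutoff $\chi$ with $\chi\equiv 1$ on $T$ and $\operatorname{supp}\chi\subset U$, I would introduce the weighted perturbation $h(t):=\chi(t)(1+\|\tilde\gamma(t)\|_X)V(t)$, which vanishes on $M_0$ and satisfies the lower bound $\langle\phi'(\tilde\gamma(t)),h(t)\rangle>\chi(t)\cdot 4\epsilon/\delta$ on $U$. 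The family $\eta_s(t):=\tilde\gamma(t)-s\,h(t)$ then lies in $\Gamma$ for small $s>0$, and a Taylor expansion together with localization of the supremum of $\phi\circ\eta_s$ within $U$ would give $\Psi(\eta_s)\leq\Psi(\tilde\gamma)-\kappa s+o(s)$ for a constant $\kappa$ strictly exceeding $(\epsilon/\delta)\|h\|_\infty$, contradicting Ekeland's strict inequality as $s\to 0^{+}$.

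The hardest step is this last contradiction: one must match the decrease rate of $\Psi$ along $\eta_s$ against the uniform cost $d(\eta_s,\tilde\gamma)=s\|h\|_\infty$, and the Cerami weight $(1+\|\tilde\gamma\|_X)$ inside $h$ can make $\|h\|_\infty$ arbitrarily large, so the bookkeeping of constants is delicate. The cleanest remedy, as carried out by Liu--Wang, is to replace the uniform metric $d$ on $\Gamma$ by a weighted metric such as $\tilde d(\gamma_1,\gamma_2):=\sup_t\|\gamma_1(t)-\gamma_2(t)\|_X/(1+\|\gamma_1(t)\|_X)$, in which Ekeland directly produces the Cerami-type bound, or equivalently to integrate the weighted flow $\dot\eta=-V(\eta)/(1+\|\eta\|_X)$ whose trajectories have unit speed in this metric while producing descent at rate $\tfrac{1}{2}\|\phi'(\eta)\|_{X'}(1+\|\eta\|_X)$. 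Coordinating this weight with the requirement that the perturbation vanish on $M_0$, while ensuring that the sup in $\Psi(\eta_s)$ remains controlled uniformly in $s$, is the technically subtle point of the proof.
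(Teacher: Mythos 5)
First, note that the paper does not prove this proposition --- it is imported verbatim as Proposition 2.8 of the cited reference [LW] (by Li and Wang, not ``Liu--Wang'' as you write), so there is no internal proof to compare your attempt against; I will therefore assess your sketch on its own merits.

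Your outline follows the standard Ekeland-plus-pseudo-gradient template, and you have correctly identified the genuine obstruction: once the Cerami weight $(1+\|\tilde\gamma(t)\|_X)$ is built into the perturbation $h$, its $C(M,X)$-norm $\|h\|_\infty$ is uncontrolled, so the decrease of $\Psi$ along $\eta_s$ cannot be played off against the Ekeland penalty $\frac{\epsilon}{\delta}\,d(\tilde\gamma,\eta_s)=\frac{\epsilon}{\delta}\,s\|h\|_\infty$ in the usual way. However, the remedy you invoke does not close this gap. The quantity $\tilde d(\gamma_1,\gamma_2)=\sup_t\|\gamma_1(t)-\gamma_2(t)\|_X/(1+\|\gamma_1(t)\|_X)$ is not symmetric, hence not a metric; one must pass to a symmetrized or geodesic variant and then re-verify that the path space is complete and $\Psi$ lower semicontinuous with respect to it --- none of which you address (and $\Gamma$ itself is only closed in $C(M,X)$ if $\Gamma_0$ is, another point you assert without justification; the usual fix is to work in the affine slice of paths sharing $\gamma|_{M_0}$). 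More seriously, Ekeland applied in any such weighted metric produces only $\tilde d(\tilde\gamma,\gamma)\leq\delta$, which unwinds to $\|\tilde\gamma(t)-\gamma(t)\|_X\leq(1+\|\tilde\gamma(t)\|_X)\,\delta$; this degenerates when $\|\tilde\gamma(t)\|_X$ is large and therefore yields conclusion (ii) only in a weighted form, not the uniform estimate $\mathrm{dist}(u,\gamma(M))\leq 2\delta$ that the proposition actually asserts. So your proposal, as written, establishes a weaker statement. The step you flag as ``the technically subtle point'' --- reconciling the Cerami weight in (iii) with the uniform control in (ii) --- is in fact the substance of the theorem, and it is exactly where your sketch stops short of a proof.
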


Lemmas \ref{lemma2} and \ref{lemma11} implies that
\begin{equation}\label{4}
  0<\inf_{\|u\|_{H^{1}}=\alpha}I(u)\leq c_{mp}<+\infty,
\end{equation}
and hence the functional $I$ has a mountain pass geometry. Now we will use Proposition \ref{prop1} to show the existence of a Cerami sequence $\{u_{n}\}\subset X$ at the mountain pass energy level $c_{mp}$ with a key additional property $J_{k}(u_{n})\rightarrow0$ ($k=1$ or $2$) as $n\rightarrow+\infty$.

\begin{lem}\label{lemma12}
Assume $p\geq2$ and $q>2$. Then, for any given $k=1$ or $2$, there exists a sequence $\{u^{k}_{n}\}\subset X$ such that, as $n\rightarrow+\infty$,
\begin{equation}\label{25}
I\left(u^{k}_{n}\right) \longrightarrow c_{m p}, \quad\left\|I^{\prime}(u^{k}_{n})\right\|_{X^{\prime}}\left(1+\|u^{k}_{n}\|_{X}\right) \longrightarrow 0, \quad J_{k}\left(u^{k}_{n}\right) \longrightarrow 0,
\end{equation}
where the auxiliary functional $J_{k}$ is defined by \eqref{af}.
\end{lem}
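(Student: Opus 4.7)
The plan is to apply the abstract minimax principle of Proposition~\ref{prop1} not to $I$ on $X$ but to a scaling-augmented functional on $\mathbb{R}\times X$. The motivation is the direct identity
\[
J_k(u) = \frac{d}{dt}I(u_{t,k})\Big|_{t=1},
\]
obtained by differentiating the explicit formula \eqref{63} at $t=1$ and comparing with \eqref{af}. Thus $J_k$ coincides with the derivative of $I$ along the one-parameter scaling orbit $t\mapsto u_{t,k}$, which indicates that enlarging the phase space by a scaling parameter will produce a sequence on which $J_k$ automatically tends to zero.

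Concretely, I would define $\Phi_k\in C^1(\mathbb{R}\times X,\mathbb{R})$ by $\Phi_k(\theta,u):=I(u_{e^\theta,k})$; the $C^1$ property is clear from the explicit polynomial-plus-log form \eqref{63}, and the semigroup identity $(u_{s,k})_{t,k}=u_{st,k}$ gives
\[
\partial_\theta\Phi_k(\theta,u)=J_k(u_{e^\theta,k}),\qquad
\bigl\langle\partial_u\Phi_k(\theta,u),\varphi\bigr\rangle=\bigl\langle I'(u_{e^\theta,k}),\varphi_{e^\theta,k}\bigr\rangle.
\]
The correspondence $\gamma\leftrightarrow\widetilde{\gamma}(t):=(0,\gamma(t))$ is a bijection between $\Gamma$ and the augmented path class
\[
\widetilde{\Gamma}:=\{\widetilde{\gamma}\in C([0,1];\mathbb{R}\times X):\widetilde{\gamma}(0)=(0,0),\ \Phi_k(\widetilde{\gamma}(1))<0\};
\]
the inverse sends $(\theta(\cdot),v(\cdot))$ to $t\mapsto v(t)_{e^{\theta(t)},k}\in\Gamma$, with Lemma~\ref{lemma11} guaranteeing that endpoint negativity can be achieved. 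It follows that the mountain-pass level associated to $\Phi_k$ and $\widetilde{\Gamma}$ equals $c_{mp}$; the order hypothesis of Proposition~\ref{prop1} holds because $c_{mp}>0$ by \eqref{4} (obtained from Lemmas~\ref{lemma2} and \ref{lemma11}). Applying the proposition produces $(\theta_n,v_n)\in\mathbb{R}\times X$ with $\Phi_k(\theta_n,v_n)\to c_{mp}$ and $\|\Phi_k'(\theta_n,v_n)\|(1+|\theta_n|+\|v_n\|_X)\to 0$.

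Setting $u_n^k:=(v_n)_{e^{\theta_n},k}$, the convergences $I(u_n^k)=\Phi_k(\theta_n,v_n)\to c_{mp}$ and $|J_k(u_n^k)|=|\partial_\theta\Phi_k(\theta_n,v_n)|\to 0$ are immediate. The remaining property $\|I'(u_n^k)\|_{X'}(1+\|u_n^k\|_X)\to 0$ requires transferring the control on $\partial_u\Phi_k$ back to $I'$: for $\psi\in X$ one writes $\psi=\varphi_{e^{\theta_n},k}$ with $\varphi:=\psi_{e^{-\theta_n},k}$, so that $\langle I'(u_n^k),\psi\rangle=\langle\partial_u\Phi_k(\theta_n,v_n),\varphi\rangle$; the desired estimate then follows provided the scaling ratios $\|\varphi\|_X/\|\psi\|_X$ and $\|u_n^k\|_X/\|v_n\|_X$ are uniformly controlled, which reduces to uniform boundedness of $\{\theta_n\}$.

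The central technical obstacle is therefore proving $\theta_n=O(1)$. I would establish this by combining the energy bound $\Phi_k(\theta_n,v_n)\to c_{mp}\in(0,\infty)$ with the quantitative smallness $|\partial_\theta\Phi_k(\theta_n,v_n)|(1+|\theta_n|+\|v_n\|_X)\to 0$, and inspecting the explicit dependence on $e^{\theta_n}$ in \eqref{63}: the exponents $2k$, $2(k-1)$, $2(kp-2)$ and $kq-2$ appearing there must be balanced so that no single term can dominate uniformly as $|\theta_n|\to\infty$. The case split $k=1$ when $p\geq 3$ versus $k=2$ when $2\leq p<3$ is precisely what renders the assumption $q\geq 2p-2$ compatible with the dominant exponents, ensuring such a balance and ultimately forcing $|\theta_n|$ to stay bounded. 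Once this is secured, the three properties in \eqref{25} all follow, and the lemma is proved.
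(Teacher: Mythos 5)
Your setup — augmenting the space with a scaling parameter, defining $\Phi_k(\theta,u)=I(u_{e^\theta,k})$, computing $\partial_\theta\Phi_k=J_k\circ\rho_k$ and $\partial_u\Phi_k$, identifying the augmented mountain-pass level with $c_{mp}$, and invoking Proposition~\ref{prop1} — is exactly the route the paper takes. However, at the point you identify as ``the central technical obstacle,'' namely bounding $\theta_n$, your argument has a genuine gap, and it stems from not using the full strength of Proposition~\ref{prop1}.

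You record only two of the three conclusions of Proposition~\ref{prop1}: the level estimate (i) and the Cerami-type estimate (iii). But the proposition also gives the localization estimate (ii), $\operatorname{dist}(u,\gamma(M))\leq 2\delta$. The paper applies the proposition with comparison paths of the special form $\tilde\gamma_n(t)=(0,\gamma_n(t))$, whose $\theta$-coordinate is identically zero, and with $\delta=\delta_n=1/n\to 0$. Conclusion (ii) then reads $\operatorname{dist}\bigl((\theta_n,v_n),\{0\}\times\gamma_n([0,1])\bigr)\leq 2/n$, which forces $|\theta_n|\leq 2/n\to 0$ directly — a much stronger conclusion than $\theta_n=O(1)$, and obtained for free. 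With $\theta_n\to 0$ the norm comparisons $\|u_n^k\|_X\leq 2(1+o(1))\|v_n\|_X$ and $\|\psi_{e^{-\theta_n},k}\|_X\leq 2(1+o(1))\|\psi\|_X$ follow from elementary scaling estimates, and the three properties in \eqref{25} drop out.

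By contrast, the workaround you sketch — deducing $\theta_n=O(1)$ by ``balancing exponents'' in the explicit formula \eqref{63} using the smallness of $\partial_\theta\Phi_k$ and the finiteness of the energy level — is not carried out and does not obviously close: nothing in the conclusions (i) and (iii) alone precludes $|\theta_n|\to\infty$ with $\|v_n\|_X$ adjusting to keep the energy near $c_{mp}$; that is precisely the kind of degeneracy Proposition~\ref{prop1}'s localization clause (ii) exists to exclude. Moreover, you invoke the hypothesis $q\geq 2p-2$ in this balancing heuristic, but that assumption is not part of Lemma~\ref{lemma12} (which only assumes $p\geq 2$, $q>2$); it belongs to the subsequent boundedness lemma, Lemma~\ref{lemma13}. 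So the proposed route both relies on an unavailable hypothesis and attempts to reprove by hand something that the abstract principle already hands you. The fix is to use item (ii) of Proposition~\ref{prop1} with paths supported at $\theta=0$ and $\delta\to 0$.
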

\begin{proof}
Let $k=1$ or $2$ be fixed. Following the ideas from \cite{J} (see also \cite{DW,HIT,MS}), we define the Banach space
\begin{equation}\label{0}
\tilde{X}:=\mathbb{R} \times X, \qquad \|(s, v)\|_{\tilde{X}}^{2}:=|s|^{2}+\|v\|_{X}^{2}, \quad \forall \,\, (s, v) \in \mathbb{R} \times X,
\end{equation}
and the continuous map
\begin{equation}\label{1}
\rho_{k}: \tilde{X} \longrightarrow X, \qquad \rho_{k}(s, v)(\cdot)=e^{ks}v\left(e^{s}\cdot\right), \quad \forall \,\, (s, v) \in \mathbb{R} \times X,
\end{equation}
which is linear with respect to $v\in X$ for any fixed $s\in\mathbb{R}$. Moreover, we define a functional on $\tilde{X}$ by $\phi_{k}(s,v):=I(\rho_{k}(s,v))$, then we have
\begin{equation}\label{2}
\begin{aligned}
\phi_{k}(s, v) &=\frac{1}{2} e^{2ks} \int_{\mathbb{R}^{2}}|\nabla v|^{2}dx+\frac{1}{2} e^{2(k-1)s} \int_{\mathbb{R}^{2}}a|v|^{2}dx \\
&+\frac{\gamma}{4p\pi} e^{2(kp-2)s} \int_{\mathbb{R}^{2}}\int_{\mathbb{R}^{2}}\log(|x-y|)|v|^{p}(x)|v|^{p}(y)dxdy \\
&-\frac{\gamma}{4p\pi}se^{2(kp-2)s}\left(\int_{\mathbb{R}^{2}}|v|^{p}dx\right)^{2}-\frac{b}{q} e^{(kq-2)s}\int_{\mathbb{R}^{2}}|v|^{q}dx.
\end{aligned}
\end{equation}
Through direct calculations, we get
\begin{equation}\label{3}\begin{aligned}
&\quad \partial_{s} \phi_{k}(s, v) \\
&=ke^{2ks}\int_{\mathbb{R}^{2}}|\nabla v|^{2}dx+(k-1)e^{2(k-1)s}\int_{\mathbb{R}^{2}}a|v|^{2}dx
-\frac{\left(kq-2\right)b}{q}e^{(kq-2)s}\int_{\mathbb{R}^{2}}|v|^{q}dx \\
&\quad +\frac{\gamma}{2p\pi}(kp-2)e^{2(kp-2)s} \int_{\mathbb{R}^{2}}\int_{\mathbb{R}^{2}}\log(|x-y|)|v|^{p}(x)|v|^{p}(y)dxdy \\
&\quad -\frac{\gamma}{4p\pi}\left[1+2s\left(kp-2\right)\right]e^{2(kp-2)s}\left(\int_{\mathbb{R}^{2}}|v|^{p}dx\right)^{2} \\
&=k\int_{\mathbb{R}^{2}}|\nabla \rho_{k}|^{2}dx+(k-1)\int_{\mathbb{R}^{2}}a|\rho_{k}|^{2}dx-\frac{(kq-2)b}{q}\int_{\mathbb{R}^{2}}|\rho_{k}|^{q}dx \\
&\quad +\frac{\gamma}{2p\pi}\left(kp-2\right)\int_{\mathbb{R}^{2}}\int_{\mathbb{R}^{2}}\log (|x-y|)|\rho_{k}|^{p}(x)|\rho_{k}|^{p}(y)dxdy-\frac{\gamma}{4p\pi}\left(\int_{\mathbb{R}^{2}}|\rho_{k}|^{p}dx\right)^{2} \\
&=J_{k}(\rho_{k}(s, v)),
\end{aligned}\end{equation}
where the auxiliary functional $J_{k}$ is defined by \eqref{af}. Moreover, one has
\begin{equation}\label{4}\begin{aligned}
\left\langle\partial_{v} \phi_{k}(s, v), w\right\rangle &=\left.\frac{d}{d t}\phi_{k}(s, v+t w)\right|_{t=0}=\left.\frac{d}{d t}I(\rho_{k}(s, v+t w))\right|_{t=0} \\
&=\left.\frac{d}{d t} I(\rho_{k}(s, v)+t \rho_{k}(s, w))\right|_{t=0}=\left\langle I^{\prime}(\rho_{k}(s, v)), \rho_{k}(s, w)\right\rangle.
\end{aligned}\end{equation}
Thus the functional $\phi_{k}\in C^{1}(\tilde{X},\mathbb{R})$ and for any $s\in\mathbb{R}$ and $v,w\in X$,
\begin{eqnarray}\label{5}
\left\langle\phi_{k}^{\prime}(s, v),(h, w)\right\rangle &=& \partial_{s} \phi_{k}(s, v) h+\left\langle\partial_{v}\phi_{k}(s,v),w\right\rangle \\
\nonumber &=& J_{k}(\rho_{k}(s, v))h+\left\langle I^{\prime}(\rho_{k}(s, v)), \rho_{k}(s, w)\right\rangle.
\end{eqnarray}

Now we define the following mountain pass value for $\tilde{X}$ and $\phi_{k}$:
\begin{equation}\label{6}
c_{m p,k}^{\ast}:=\inf_{\tilde{\gamma}\in\tilde{\Gamma}_{k}}\max_{t\in[0,1]}\phi_{k}(\tilde{\gamma}(t)),
\end{equation}
where
\begin{equation}\label{7}
  \tilde{\Gamma}_{k}=\left\{\tilde{\gamma} \in C([0,1],\tilde{X})\mid\tilde{\gamma}(0)=(0,0), \phi_{k}(\tilde{\gamma}(1))<0\right\}.
\end{equation}
One can easily observe that $\Gamma=\left\{\rho_{k}\circ\tilde{\gamma}\mid\tilde{\gamma}\in\tilde{\Gamma}_{k}\right\}$ (for the definition of $\Gamma$, see \eqref{mp}), and hence $c_{mp}=c^{\ast}_{mp,k}$. By the definition of $c_{mp}$, for any $n\in\mathbb{N}$, there exists a $\gamma_n\in\Gamma$ such that
\begin{equation}\label{8}
\max _{t \in[0,1]} I\left(\gamma_{n}(t)\right)\leq c_{m p}+\frac{1}{n^{2}}.
\end{equation}
Then we define $\tilde{\gamma}_{n}\in\tilde{\Gamma}_{k}$ by $\tilde{\gamma}_{n}(t):=(0,\gamma_{n}(t))$ and obtain that
\begin{equation}\label{9}
\max _{t \in[0,1]}\phi_{k}\left(\tilde{\gamma}_{n}(t)\right)=\max _{t \in[0,1]} I\left(\gamma_{n}(t)\right)\leq c_{m p}+\frac{1}{n^{2}}.
\end{equation}
For any $n\in\mathbb{N}$, by taking $X=\tilde{X}$, $\Gamma=\tilde{\Gamma}_{k}$, $M=[0,1]$, $M_{0}=\{0,1\}$, $\gamma=\tilde{\gamma}_{n}$, $\epsilon=\frac{c_{mp}}{4n^{2}}$ and $\delta=\frac{1}{n}$ in Proposition \ref{prop1}, we can deduce from Proposition \ref{prop1} that, there exists a sequence $\{(s^{k}_{n}, v^{k}_{n})\}\in\tilde{X}$ such that
\begin{equation}\label{10}
c_{mp}-\frac{c_{mp}}{2n^{2}} \leq \phi_{k}\left(s^{k}_{n},v^{k}_{n}\right) \leq c_{m p}+\frac{c_{mp}}{2n^{2}},
\end{equation}
\begin{equation}\label{11}
\operatorname{dist}\left(\left(s^{k}_{n},v^{k}_{n}\right),\{0\} \times \gamma_{n}([0,1])\right) \leq \frac{2}{n},
\end{equation}
\begin{equation}\label{12}
\left(1+\left\|\left(s^{k}_{n},v^{k}_{n}\right)\right\|_{\tilde{X}}\right)\left\|\phi_{k}^{\prime}\left(s^{k}_{n},v^{k}_{n}\right)\right\|_{\tilde{X}^{\prime}}\leq \frac{2c_{mp}}{n}.
\end{equation}
It follow directly from \eqref{11} that $\lim_{n\rightarrow+\infty}s^{k}_{n}=0$. Moreover, by taking $h=1$ and $w=0$ in \eqref{5}, we get
\begin{equation}\label{13}
\left|\left\langle\phi_{k}^{\prime}\left(s^{k}_{n}, v^{k}_{n}\right),(1,0)\right\rangle\right|=\left|J_{k}\left(\rho_{k}\left(s^{k}_{n}, v^{k}_{n}\right)\right)\right|\leq\left\|\phi_{k}^{\prime}\left(s^{k}_{n}, v^{k}_{n}\right)\right\|_{\tilde{X}^{\prime}} \longrightarrow 0.
\end{equation}
Combining this with \eqref{10} yields that, for $u^{k}_{n}:=\rho_{k}(s^{k}_{n},v^{k}_{n})\in X$,
\begin{equation}\label{14}
I\left(u^{k}_{n}\right)\longrightarrow c_{m p}, \qquad J_{k}(u^{k}_{n})\rightarrow0, \qquad \text{as} \,\, n\rightarrow+\infty.
\end{equation}
Furthermore, we can show that
\begin{equation}\label{15}
\left\|u^{k}_{n}\right\|_{X}\leq2\left(1+o_{n}(1)\right)\left\|v^{k}_{n}\right\|_{X}\leq2\left(1+o_{n}(1)\right)\left\|\left(s^{k}_{n}, v^{k}_{n}\right)\right\|_{\tilde{X}}.
\end{equation}
Indeed, through direct calculations, we get
\begin{eqnarray}\label{16}
\|u^{k}_{n}\|^{2}_{H^{1}(\mathbb{R}^{2})}&:=&\int_{\mathbb{R}^{2}}\left(|\nabla u^{k}_{n}|^{2}+a|u^{k}_{n}|^{2}\right)dx \\
\nonumber &=&e^{2ks^{k}_{n}}\int_{\mathbb{R}^{2}}\left|\nabla v^{k}_{n}\right|^{2}dx+e^{2(k-1)s^{k}_{n}}\int_{\mathbb{R}^{2}}a|v^{k}_{n}|^{2}dx \\
\nonumber &=&(1+o_{n}(1))\int_{\mathbb{R}^{2}}\left(|\nabla v^{k}_{n}|^{2}+a|v^{k}_{n}|^{2}\right)dx=(1+o_{n}(1))\|v^{k}_{n}\|^{2}_{H^{1}(\mathbb{R}^{2})},
\end{eqnarray}
\begin{equation}\label{17}\begin{aligned}
|u^{k}_{n}|^{p}_{\ast}&:=\left\|u^{k}_{n}\right\|_{L^{p}(d\mu)}^{p}=\int_{\mathbb{R}^{2}}\log(1+|x|)\left|u^{k}_{n}\right|^{p}(x)dx \\
&=\int_{\mathbb{R}^{2}}\log(1+|x|)\left|e^{ks^{k}_{n}}v^{k}_{n}\right|^{p}\left(e^{s^{k}_{n}}x\right)dx \\
&=e^{(kp-2)s^{k}_{n}}\int_{\mathbb{R}^{2}}\log\left(1+e^{-s^{k}_{n}}|x|\right)\left|v^{k}_{n}\right|^{p}(x)dx \\
&=\left(1+o_{n}(1)\right)\int_{\mathbb{R}^{2}}\log(1+|x|)\left|v^{k}_{n}\right|^{p}dx
-s^{k}_{n}\left(1+o_{n}(1)\right)\int_{\mathbb{R}^{2}}\left|v^{k}_{n}\right|^{p}(x)dx \\
&=\left(1+o_{n}(1)\right)|v^{k}_{n}|^{p}_{\ast}+o_{n}(1)\|v^{k}_{n}\|^{p}_{H^{1}(\mathbb{R}^{2})} \\
&\leq\left(1+o_{n}(1)\right)\|v^{k}_{n}\|^{p}_{X}.
\end{aligned}\end{equation}
Consequently, we arrive at
\begin{equation}\label{18}
  \left\|u^{k}_{n}\right\|_{X}=\left\|u^{k}_{n}\right\|_{H^{1}(\mathbb{R}^{2})}+|u^{k}_{n}|_{\ast}\leq 2\left(1+o_{n}(1)\right)\left\|v^{k}_{n}\right\|_{X},
\end{equation}
and hence \eqref{15} holds.

Next, we will show that $ \left\|I^{\prime}(u^{k}_{n})\right\|_{X^{\prime}}\left(1+\|u^{k}_{n}\|_{X}\right)\longrightarrow 0$. To this end, for any $w\in X$, let us define $w^{k}_{n}:=e^{-ks^{k}_{n}}w(e^{-s^{k}_{n}}\cdot)\in X$, then we have
\begin{eqnarray}\label{19}
\|w^{k}_{n}\|^{2}_{H^{1}(\mathbb{R}^{2})}&=&e^{-2ks^{k}_{n}}\int_{\mathbb{R}^{2}}\left|\nabla w\right|^{2}dx+e^{-2(k-1)s^{k}_{n}}\int_{\mathbb{R}^{2}}a w^{2}dx \\
\nonumber &=&(1+o_{n}(1))\int_{\mathbb{R}^{2}}\left(|\nabla w|^{2}+aw^{2}\right)dx=(1+o_{n}(1))\|w\|^{2}_{H^{1}(\mathbb{R}^{2})},
\end{eqnarray}
\begin{equation}\label{20}\begin{aligned}
|w^{k}_{n}|^{p}_{\ast}&=\int_{\mathbb{R}^{2}}\log(1+|x|)\left|e^{-ks^{k}_{n}}w\right|^{p}\left(e^{-s^{k}_{n}} x\right) d x \\
&=e^{-(kp-2)s^{k}_{n}}\int_{\mathbb{R}^{2}}\log \left(1+e^{s^{k}_{n}}|x|\right)\left|w\right|^{p}(x) d x \\
&=\left(1+o_{n}(1)\right)\int_{\mathbb{R}^{2}}\log(1+|x|)\left|w\right|^{p}dx+s^{k}_{n}\left(1+o_{n}(1)\right)\int_{\mathbb{R}^{2}}\left|w\right|^{p}(x)dx \\
&=\left(1+o_{n}(1)\right)|w|^{p}_{\ast}+o_{n}(1)\|w\|^{p}_{H^{1}(\mathbb{R}^{2})} \\
&\leq\left(1+o_{n}(1)\right)\|w\|^{p}_{X},
\end{aligned}\end{equation}
and hence
\begin{equation}\label{21}
\left\|w^{k}_{n}\right\|_{X}\leq2\left(1+o_{n}(1)\right)\left\|w\right\|_{X}.
\end{equation}
By \eqref{5} with $h=0$ and \eqref{21}, we can infer that
\begin{equation}\label{22}\begin{aligned}
\left|\left\langle I^{\prime}\left(u^{k}_{n}\right), w\right\rangle\right| &=\left|\left\langle I^{\prime}\left(\rho_{k}\left(s^{k}_{n}, v^{k}_{n}\right)\right), \rho_{k}\left(s^{k}_{n}, w^{k}_{n}\right)\right\rangle\right| \\
&=\left|\left\langle\partial_{v}\phi_{k}\left(s^{k}_{n},v^{k}_{n}\right), w^{k}_{n}\right\rangle\right|=\left|\left\langle\phi_{k}'\left(s^{k}_{n}, v^{k}_{n}\right), (0,w^{k}_{n})\right\rangle\right|\\
& \leq\left\|\phi_{k}^{\prime}\left(s^{k}_{n}, v^{k}_{n}\right)\right\|_{\tilde{X}^{\prime}}\left\|w^{k}_{n}\right\|_{X}
\leq2\left(1+o_{n}(1)\right)\left\|\phi_{k}^{\prime}\left(s^{k}_{n},v^{k}_{n}\right)\right\|_{\tilde{X}^{\prime}}\left\|w\right\|_{X},
\end{aligned}\end{equation}
which implies that
\begin{equation}\label{23}
\left\|I^{\prime}(u^{k}_{n})\right\|_{X^{\prime}}\leq C\left\|\phi^{\prime}\left(s^{k}_{n}, v^{k}_{n}\right)\right\|_{\tilde{X}^{\prime}}.
\end{equation}
Combining this with \eqref{12} and \eqref{15} yields that
\begin{equation}\label{24}\begin{aligned}
\left\|I^{\prime}(u^{k}_{n})\right\|_{X^{\prime}}\left(1+\|u^{k}_{n}\|_{X}\right) & \leq C\left\|\phi_{k}^{\prime}\left(s^{k}_{n}, v^{k}_{n}\right)\right\|_{\tilde{X}^{\prime}}\left(1+\left\|\left(s^{k}_{n}, v^{k}_{n}\right)\right\|_{\tilde{X}}\right) \rightarrow 0,
\end{aligned}\end{equation}
as $n\rightarrow+\infty$. This concludes our proof of Lemma \ref{lemma12}.
\end{proof}

Next, we will prove the following key lemma which indicates that any sequence $\{u_{n}\}$ satisfying \eqref{25} (with $k=1$ and $p\geq3$ \emph{or} $k=2$ and $2\leq p<3$) is bounded in $H^{1}(\mathbb{R}^{2})$.
\begin{lem}\label{lemma13}
Assume $p\geq2$, $q\geq 2p-2$ and $q>2$. If $p\geq3$, let $\{u_{n}\}\subset X$ satisfies
\begin{equation}\label{eq-a0}
  c:=\sup_{n\in\mathbb{N}}I(u_{n})<+\infty, \qquad J_{1}(u_{n})\rightarrow0;
\end{equation}
if $2\leq p<3$, assume further that $q\geq2p-1$ or $q<p+1$ and let $\{u_{n}\}\subset X$ satisfies
\begin{equation}\label{eq-a1}
  c:=\sup_{n\in\mathbb{N}}I(u_{n})<+\infty, \qquad \|I'(u_{n})\|_{X'}(1+\|u_{n}\|_{X})\rightarrow 0, \quad J_{2}(u_{n})\rightarrow0,
\end{equation}
as $n\rightarrow+\infty$. Then $\{u_{n}\}$ is bounded in $H^{1}(\mathbb{R}^{2})$.
\end{lem}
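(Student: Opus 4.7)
The plan is to extract coercive information about $\|u_n\|_{H^1(\mathbb{R}^2)}$ from carefully chosen linear combinations of $I(u_n)$, $J_k(u_n)$ and (when $k=2$) $\langle I'(u_n),u_n\rangle$. Since all of these are either bounded above or tend to zero, any such linear combination is bounded above; the game is to pick coefficients so that the resulting expression dominates $\|\nabla u_n\|_{L^2}^2$ and $\|u_n\|_{L^2}^2$. The crucial algebraic observation is that $I$ and $J_k$ both contain the logarithmic convolution term $V_0$, with coefficients $\tfrac{\gamma}{4p\pi}$ and $\tfrac{\gamma(kp-2)}{2p\pi}$ respectively, so the combination $2(p-2)I-J_1$ (for $k=1$) and $4(p-1)I-J_2$ (for $k=2$) are, up to scaling, the unique ones that eliminate $V_0(u_n)$ entirely. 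This is what makes the hypothesis $J_k(u_n)\to 0$ the natural replacement, via the Pohozaev identity of Lemma \ref{Pohozaev}, for the full Palais--Smale information in the coercivity step.

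For the case $k=1$, $p\geq 3$: direct expansion using the definitions yields
\begin{equation*}
  2(p-2)I(u)-J_1(u)=(p-3)\|\nabla u\|_{L^2}^2+(p-2)\!\int_{\mathbb{R}^2}\!a u^2\,dx+\frac{(q-2p+2)b}{q}\|u\|_{L^q}^q+\frac{\gamma}{4\pi p}\|u\|_{L^p}^{2p}.
\end{equation*}
Under $p\geq 3$ and $q\geq 2p-2$ every coefficient on the right is nonnegative, while the left is bounded above by $2(p-2)c+o(1)$. For $p>3$ this immediately gives boundedness of $\|\nabla u_n\|_{L^2}$ and $\|u_n\|_{L^2}$. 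At the boundary $p=3$ only $\|u_n\|_{L^2}$, $\|u_n\|_{L^3}$, and $\|u_n\|_{L^q}$ emerge directly, and I would recover the gradient bound by returning to $J_1(u_n)\to 0$ to write $\|\nabla u_n\|_{L^2}^2\leq \tfrac{\gamma}{6\pi}V_2(u_n)+O(1)$ (using $V_0=V_1-V_2\geq -V_2$), then estimating $V_2$ by Hardy--Littlewood--Sobolev with a sub-critical exponent (exploiting $\log(1+r^{-1})\leq C_\alpha r^{-\alpha}$ for any $\alpha\in(0,1)$) and interpolating via Gagliardo--Nirenberg against the already-established $L^3$ bound to obtain $V_2(u_n)\leq C\|\nabla u_n\|_{L^2}^{\delta}$ for some $\delta<2$; this closes the gradient estimate.

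For the case $k=2$, $2\leq p<3$: the analogous combination is
\begin{equation*}
  4(p-1)I(u)-J_2(u)=2(p-2)\|\nabla u\|_{L^2}^2+(2p-3)\!\int_{\mathbb{R}^2}\!a u^2\,dx+\frac{2(q-2p+1)b}{q}\|u\|_{L^q}^q+\frac{\gamma}{4\pi p}\|u\|_{L^p}^{2p}.
\end{equation*}
When $2<p<3$ and $q\geq 2p-1$ every coefficient is strictly positive and the $H^1$ bound is immediate. The remaining sub-cases rely on the additional hypothesis $\langle I'(u_n),u_n\rangle\to 0$: for $p=2$ the combination $J_2(u_n)-\langle I'(u_n),u_n\rangle=\|\nabla u_n\|_{L^2}^2-\tfrac{q-2}{q}b\|u_n\|_{L^q}^q-\tfrac{\gamma}{8\pi}\|u_n\|_{L^2}^4\to 0$, paired with the $L^2$ control coming from $4I-J_2$ and the Gagliardo--Nirenberg inequality $\|u\|_{L^q}^q\leq C\|u\|_{L^2}^{2}\|\nabla u\|_{L^2}^{q-2}$ (valid on $\mathbb{R}^2$ for $q>2$), closes the estimate. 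The sub-case $2<p<3$ with $q<p+1$ is handled similarly by moving the negative $L^q$ contribution to the right side and absorbing it using the positive $\|u\|_{L^p}^{2p}$ and $\|u\|_{L^2}^2$ terms, via a refined interpolation that plays $\|u\|_{L^{2p}}$ off against $\|u\|_{L^p}$ and $\|u\|_{L^2}$.

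The main obstacle is therefore the boundary regime in each case: at $p=3$ (for $k=1$) the coefficient of $\|\nabla u\|_{L^2}^2$ in the $V_0$-killing combination degenerates to zero, and likewise at $p=2$ (for $k=2$); the same degeneracy is present in the gap range $q<p+1$, where the coefficient of $\|u\|_{L^q}^q$ changes sign. In every one of these borderline configurations the gradient has to be retrieved by balancing the positive bilinear form $V_2$ (controlled via Hardy--Littlewood--Sobolev) against the sub-critical power $q-2<2$ of $\|\nabla u\|_{L^2}$ produced by the two-dimensional Gagliardo--Nirenberg inequality, and it is precisely this balance that breaks down in the excluded window $p+1\leq q<2p-1$ for $2<p<3$, which is why that window is not covered by the statement.
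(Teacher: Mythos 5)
Your $V_0$-killing combinations $2(p-2)I-J_1$ and $4(p-1)I-J_2$ are, up to normalisation, the same as the paper's $I-\frac{1}{2(p-2)}J_1$ and $I-\frac{1}{4(p-1)}J_2$, and the expansions you give are correct. In the generic regimes ($p>3$; $2<p<3$ with $q\geq 2p-1$; and, after a Hardy--Littlewood--Sobolev plus Gagliardo--Nirenberg bootstrap, $p=3$ and $p=2$ with $q\geq 3$) your argument is essentially the paper's; the one variation is that for $p=2$, $q\geq3$ you recover the gradient from $J_2(u_n)-\langle I'(u_n),u_n\rangle\to0$ whereas the paper expands $\|u_n\|^2_{H^1}$ in terms of $2I(u_n)$, and both work.

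The gap lies in the boundary regime where the $L^q$ coefficient in the $V_0$-free combination turns negative, namely $p=2$ with $2<q<3$ (covered by the lemma since then $q<p+1=3$) and $2<p<3$ with $2p-2\leq q<p+1$. For $p=2$, $2<q<3$ your phrase ``the $L^2$ control coming from $4I-J_2$'' is incorrect: the coefficient $2(q-3)/q$ of $b\|u_n\|^q_{L^q}$ in $4I-J_2$ is \emph{negative} for $q<3$, so $4I(u_n)-J_2(u_n)\leq C$ on its own yields no $L^2$ bound, and closing the estimate requires coupling the two inequalities from $4I-J_2$ and $J_2-\langle I'(u_n),u_n\rangle$ through Gagliardo--Nirenberg simultaneously, which you do not carry out. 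For $2<p<3$ with $q<p+1$, the sentence about ``absorbing it using the positive $\|u\|_{L^p}^{2p}$ and $\|u\|_{L^2}^2$ terms, via a refined interpolation that plays $\|u\|_{L^{2p}}$ off against $\|u\|_{L^p}$ and $\|u\|_{L^2}$'' cannot stand: $\|u\|_{L^q}^q$ is not controlled by $\|u\|_{L^2}$ and $\|u\|_{L^p}$ without a gradient factor, and no $L^{2p}$ norm occurs anywhere in the combination --- $\|u\|_{L^p}^{2p}$ is $\left(\int|u|^p\right)^2$, not $\int|u|^{2p}$. The absorption that does work uses the surviving positive $2(p-2)\|\nabla u\|^2_{L^2}$ together with $\|u\|_{L^p}^{2p}$: from $\|u\|_{L^q}^q\leq C\|u\|_{L^p}^p\|\nabla u\|_{L^2}^{q-p}$, Young against $\tfrac{\gamma}{4\pi p}\|u\|_{L^p}^{2p}$ leaves a remainder bounded by $C'\|\nabla u_n\|_{L^2}^{2(q-p)}=o\left(\|\nabla u_n\|_{L^2}^{2}\right)$ precisely because $q-p<1$; note this degenerates at $p=2$, which is why $p=2$, $q<3$ is the hardest sub-case. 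In the paper both borderline regimes are instead handled by a rescaling contradiction: one sets $t_n=\|\nabla u_n\|_{L^2}^{-1/2}$ and $v_n:=t_n^2 u_n(t_n\cdot)$, multiplies the $V_0$-free inequality by $t_n^4$, and plays the resulting bounds on $\|v_n\|_{L^p}$ off against $J_2(u_n)\to0$ to rule out $\|\nabla u_n\|_{L^2}\to\infty$. Your proposal omits this step, and the sketches offered in its place do not close, so the proof is incomplete in exactly the regime that makes the lemma non-trivial.
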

\begin{proof}
We first consider the cases $p\geq3$. By \eqref{eq-a0}, we have
\begin{equation}\label{26'}\begin{aligned}
c+o(1)&\geq I\left(u_{n}\right)-\frac{1}{2(p-2)}J_{1}\left(u_{n}\right) \\
&=\left(\frac{1}{2}-\frac{1}{2(p-2)}\right)\int_{\mathbb{R}^{2}}\left|\nabla u_{n}\right|^{2}dx+\frac{a}{2}\int_{\mathbb{R}^{2}}u_{n}^{2}dx \\
&+\frac{\gamma}{8\pi p(p-2)}\left(\int_{\mathbb{R}^{2}}\left|u_{n}\right|^{p}dx\right)^{2}
+\frac{b}{q}\left(\frac{q-2}{2(p-2)}-1\right)\int_{\mathbb{R}^{2}}\left|u_{n}\right|^{q}dx.
\end{aligned}\end{equation}
If $p>3$, it follows immediately from \eqref{26'} that $\{u_{n}\}$ is bounded in $H^{1}(\mathbb{R}^{2})$ due to $q\geq2p-2$. If $p=3$ and $q>2p-2=4$, then \eqref{26'} implies that $\{u_{n}\}$ is bounded in $L^{2}(\mathbb{R}^{2})$, $L^{3}(\mathbb{R}^{2})$, and $L^{q}(\mathbb{R}^{2})$ if $b>0$. Thus we infer from \eqref{2-12} and the Gagliardo-Nirenberg inequality that
\begin{eqnarray}\label{27'}
\left\|u_{n}\right\|_{H^{1}(\mathbb{R}^{2})}^{2}&=&2I\left(u_{n}\right)+\frac{\gamma}{6\pi}\left(V_{2}\left(u_{n}\right)-V_{1}\left(u_{n}\right)\right)
+\frac{2b}{q}\left\|u_{n}\right\|_{L^{q}(\mathbb{R}^{2})}^{q} \\
\nonumber &\leq& 2c+C\|u_{n}\|^{6}_{L^{4}(\mathbb{R}^{2})}+\frac{2b}{q}\left\|u_{n}\right\|_{L^{q}(\mathbb{R}^{2})}^{q} \\
\nonumber &\leq& 2c+C\left\|u_{n}\right\|_{L^{3}(\mathbb{R}^{2})}^{\frac{9}{2}}\left\|\nabla u_{n}\right\|_{L^{2}(\mathbb{R}^{2})}^{\frac{3}{2}}
+\frac{2b}{q}\left\|u_{n}\right\|_{L^{q}(\mathbb{R}^{2})}^{q} \\
\nonumber &\leq& C_{1}+C_{2}\left\|u_{n}\right\|_{H^{1}(\mathbb{R}^{2})}^{\frac{3}{2}},
\end{eqnarray}
and hence $\{u_{n}\}$ is bounded in $H^{1}(\mathbb{R}^{2})$. In the case $p=3$ and $q=4$, it follows from \eqref{26'} that $\{u_{n}\}$ is bounded in $L^{2}(\mathbb{R}^{2})$ and $L^{3}(\mathbb{R}^{2})$. Thus we deduce from \eqref{2-12} and the Gagliardo-Nirenberg inequality that
\begin{eqnarray}\label{27'+}
\left\|u_{n}\right\|_{H^{1}(\mathbb{R}^{2})}^{2}&=&2I\left(u_{n}\right)+\frac{\gamma}{6\pi}\left(V_{2}\left(u_{n}\right)-V_{1}\left(u_{n}\right)\right)
+\frac{b}{2}\left\|u_{n}\right\|_{L^{4}(\mathbb{R}^{2})}^{4} \\
\nonumber &\leq& 2c+C\|u_{n}\|^{6}_{L^{4}(\mathbb{R}^{2})}+\frac{b}{2}\left\|u_{n}\right\|_{L^{4}(\mathbb{R}^{2})}^{4} \\
\nonumber &\leq& 2c+C\left\|u_{n}\right\|_{L^{3}(\mathbb{R}^{2})}^{\frac{9}{2}}\left\|\nabla u_{n}\right\|_{L^{2}(\mathbb{R}^{2})}^{\frac{3}{2}}
+C\left\|u_{n}\right\|_{L^{3}(\mathbb{R}^{2})}^{3}\left\|\nabla u_{n}\right\|_{L^{2}(\mathbb{R}^{2})} \\
\nonumber &\leq& C_{1}+C_{2}\left\|u_{n}\right\|_{H^{1}(\mathbb{R}^{2})}^{\frac{3}{2}}+C_{3}\left\|u_{n}\right\|_{H^{1}(\mathbb{R}^{2})},
\end{eqnarray}
and hence $\{u_{n}\}$ is bounded in $H^{1}(\mathbb{R}^{2})$.

Next, we consider the cases $2\leq p<3$. From \eqref{eq-a1}, we get
\begin{equation}\label{26}\begin{aligned}
c+o(1)&\geq I\left(u_{n}\right)-\frac{1}{4(p-1)}J_{2}\left(u_{n}\right) \\
&=\left(\frac{1}{2}-\frac{1}{2(p-1)}\right)\int_{\mathbb{R}^{2}}\left|\nabla u_{n}\right|^{2}dx+\left(\frac{1}{2}-\frac{1}{4(p-1)}\right)\int_{\mathbb{R}^{2}}au_{n}^{2}dx \\
&+\frac{\gamma}{16\pi p(p-1)}\left(\int_{\mathbb{R}^{2}}\left|u_{n}\right|^{p}dx\right)^{2}
+\frac{(q+1-2p)b}{2(p-1)q}\int_{\mathbb{R}^{2}}\left|u_{n}\right|^{q}dx.
\end{aligned}\end{equation}
We will discuss two different cases $q\geq 2p-1$ and $q<2p-1$.

\emph{Case (i)} $q\geq 2p-1$. If $p>2$, it follows immediately from \eqref{26} that $\{u_{n}\}$ is bounded in $H^{1}(\mathbb{R}^{2})$. If $p=2$ and $q>2p-1=3$, then \eqref{26} implies that $\{u_{n}\}$ is bounded in $L^{2}(\mathbb{R}^{2})$ and $L^{q}(\mathbb{R}^{2})$ if $b>0$. Thus, we infer from \eqref{2-12} and the Gagliardo-Nirenberg inequality that
\begin{eqnarray}\label{27}
\left\|u_{n}\right\|_{H^{1}(\mathbb{R}^{2})}^{2}&=&2I\left(u_{n}\right)+\frac{\gamma}{4\pi}\left(V_{2}\left(u_{n}\right)-V_{1}\left(u_{n}\right)\right)
+\frac{2b}{q}\left\|u_{n}\right\|_{L^{q}(\mathbb{R}^{2})}^{q} \\
\nonumber &\leq& 2c+C\|u_{n}\|^{4}_{L^{\frac{8}{3}}(\mathbb{R}^{2})}+\frac{2b}{q}\left\|u_{n}\right\|_{L^{q}(\mathbb{R}^{2})}^{q} \\
\nonumber &\leq& 2c+C\left\|u_{n}\right\|_{L^{2}(\mathbb{R}^{2})}^{3}\left\|\nabla u_{n}\right\|_{L^{2}(\mathbb{R}^{2})}
+\frac{2b}{q}\left\|u_{n}\right\|_{L^{q}(\mathbb{R}^{2})}^{q} \\
\nonumber &\leq& C_{1}+C_{2}\left\|u_{n}\right\|_{H^{1}(\mathbb{R}^{2})},
\end{eqnarray}
and hence $\{u_{n}\}$ is bounded in $H^{1}(\mathbb{R}^{2})$. In the case $p=2$ and $q=3$, it follows from \eqref{26} that $\{u_{n}\}$ is bounded in $L^{2}(\mathbb{R}^{2})$. Thus we deduce from \eqref{2-12} and the Gagliardo-Nirenberg inequality that
\begin{eqnarray}\label{27+}
\left\|u_{n}\right\|_{H^{1}(\mathbb{R}^{2})}^{2}&=&2I\left(u_{n}\right)+\frac{\gamma}{4\pi}\left(V_{2}\left(u_{n}\right)-V_{1}\left(u_{n}\right)\right)
+\frac{2b}{3}\left\|u_{n}\right\|_{L^{3}(\mathbb{R}^{2})}^{3} \\
\nonumber &\leq& 2c+C\|u_{n}\|^{4}_{L^{\frac{8}{3}}(\mathbb{R}^{2})}+\frac{2b}{3}\left\|u_{n}\right\|_{L^{3}(\mathbb{R}^{2})}^{3} \\
\nonumber &\leq& 2c+C\left\|u_{n}\right\|_{L^{2}(\mathbb{R}^{2})}^{3}\left\|\nabla u_{n}\right\|_{L^{2}(\mathbb{R}^{2})}
+C\left\|u_{n}\right\|_{L^{2}(\mathbb{R}^{2})}^{2}\left\|\nabla u_{n}\right\|_{L^{2}(\mathbb{R}^{2})} \\
\nonumber &\leq& C_{1}+C_{2}\left\|u_{n}\right\|_{H^{1}(\mathbb{R}^{2})},
\end{eqnarray}
and hence $\{u_{n}\}$ is bounded in $H^{1}(\mathbb{R}^{2})$.

\emph{Case (ii)} $q<2p-1$. Assume further that $q<p+1$. We will first show that $\|\nabla u_{n}\|_{L^{2}(\mathbb{R}^{2})}$ ($n\in\mathbb{N}$) are bounded.

If not, suppose on the contrary that, after extracting a subsequence, we have $\|\nabla u_{n}\|_{L^{2}(\mathbb{R}^{2})}\rightarrow+\infty$ as $n\rightarrow+\infty$. Let $t_{n}:=\|\nabla u_{n}\|_{L^{2}(\mathbb{R}^{2})}^{-\frac{1}{2}}$ ($n\in\mathbb{N}$), so $t_{n}\rightarrow0$ as $n\rightarrow+\infty$. We define the rescaled functions $v_{n}:=t_{n}^{2}u_{n}(t_{n}\cdot)\in X$ for any $n\in\mathbb{N}$. Then we have
\begin{equation}\label{28}
\left\|\nabla v_{n}\right\|_{L^{2}}=1 \qquad \text { and } \qquad\left\|v_{n}\right\|_{L^{r}}^{r}=t_{n}^{2r-2}\left\|u_{n}\right\|_{L^{r}}^{r}, \quad \forall \,\, n \in \mathbb{N}, \,\, 1 \leqslant r<+\infty.
\end{equation}
From the Gagliardo-Nirenberg inequality, one can deduce that, for any $n \in \mathbb{N}$,
\begin{equation}\label{29}
\left\|v_{n}\right\|_{L^{p}}^{p} \leqslant C\left\|v_{n}\right\|_{L^{2}}^{2}\left\|\nabla v_{n}\right\|_{L^{2}}^{p-2}=C\left\|v_{n}\right\|_{L^{2}}^{2},
\end{equation}
\begin{equation}\label{30}
\left\|v_{n}\right\|_{L^{q}}^{q} \leqslant C\left\|v_{n}\right\|_{L^{p}}^{p}\left\|\nabla v_{n}\right\|_{L^{2}}^{q-p}=C\left\|v_{n}\right\|_{L^{p}}^{p}.
\end{equation}
Multiplying \eqref{26} by $t_{n}^{2p}$, we can infer from \eqref{28}, \eqref{29} and \eqref{30} that
\begin{equation}\label{31}\begin{aligned}
&\quad c t_{n}^{4}+o\left(t_{n}^{4}\right) \\
&\geq\left(\frac{1}{2}-\frac{1}{2(p-1)}\right)t_{n}^{4}\left\|\nabla u_{n}\right\|_{L^{2}}^{2}+a\left(\frac{1}{2}-\frac{1}{4(p-1)}\right)t_{n}^{4}\|u_{n}\|_{L^{2}}^{2} \\
&\quad +\frac{\gamma}{16\pi p(p-1)}t_{n}^{4}\left\|u_{n}\right\|_{L^{p}}^{2p}
-\frac{(2p-1-q)b}{2(p-1)q}t_{n}^{4}\left\|u_{n}\right\|_{L^{q}}^{q} \\
&\geq\frac{1}{2}-\frac{1}{2(p-1)}+Ca\left(\frac{1}{2}-\frac{1}{4(p-1)}\right)t_{n}^{2}\|v_{n}\|_{L^{p}}^{p} \\
&\quad +\frac{\gamma}{16\pi p(p-1)}t_{n}^{4(2-p)}\left\|v_{n}\right\|_{L^{p}}^{2p}
-C\frac{(2p-1-q)b}{2(p-1)q}t_{n}^{6-2q}\left\|v_{n}\right\|_{L^{p}}^{p}.
\end{aligned}\end{equation}
Consequently, we obtain a contradiction when $b=0$. If $b>0$, then \eqref{31} yields that
\begin{equation}\label{32}
\left\|v_{n}\right\|_{L^{p}(\mathbb{R}^{2})}=O\left(t_{n}^{\frac{2(2p-1-q)}{p}}\right).
\end{equation}
In the cases $2<p<3$, \eqref{31} also implies that
\begin{equation}\label{36}
\left\|v_{n}\right\|_{L^{p}(\mathbb{R}^{2})}\geq Ct_{n}^{\frac{2(q-3)}{p}},
\end{equation}
which is in contradiction with \eqref{32} due to $q<p+1$. If $p=2$ and $q<3$, by \eqref{2-12}, \eqref{eq-a1}, \eqref{28}, \eqref{32} and the Gagliardo-Nirenberg inequality, we also have
\begin{eqnarray}\label{33}
&& o_{n}(1)=t_{n}^{4}J_{2}\left(u_{n}\right)=t_{n}^{4}\bigg(2\left\|\nabla u_{n}\right\|_{L^{2}}^{2}+a\left\|u_{n}\right\|_{L^{2}}^{2} \\
\nonumber &&\qquad\quad\, +\frac{\gamma}{2\pi}V_{0}\left(u_{n}\right)-\frac{\gamma}{8\pi}\left\|u_{n}\right\|_{L^{2}}^{4}
-\frac{2(q-1)b}{q}\left\|u_{n}\right\|_{L^{q}}^{q}\bigg) \\
\nonumber &&\qquad\,\,\, =2+at_{n}^{2}\left\|v_{n}\right\|_{L^{2}}^{2}+\frac{\gamma}{2\pi}V_{0}\left(v_{n}\right)+\frac{\gamma}{2\pi}\log t_{n}\left\|v_{n}\right\|_{L^{2}}^{4} \\
\nonumber &&\qquad\quad\, -\frac{\gamma}{8\pi}\left\|v_{n}\right\|_{L^{2}}^{4}-\frac{2(q-1)b}{q}t_{n}^{6-2q}\left\|v_{n}\right\|_{L^{q}}^{q} \\
\nonumber &&\qquad\,\,\, \geq 2-C\|v_{n}\|_{L^{\frac{8}{3}}}^{4}+o_{n}(1) \\
\nonumber &&\qquad\,\,\, \geq 2-C\|v_{n}\|_{L^{2}}^{3}+o_{n}(1)=2+o_{n}(1),
\end{eqnarray}
which is absurd. Therefore, $\|\nabla u_{n}\|_{L^{2}(\mathbb{R}^{2})}$ ($n\in\mathbb{N}$) are bounded provided that $q<p+1$.

Now we can deduce from \eqref{eq-a1} that
\begin{eqnarray}\label{37}
&& \frac{\gamma}{4\pi p}\left\|u_{n}\right\|_{L^{p}}^{2p} \\
\nonumber &=& \left\langle I^{\prime}\left(u_{n}\right), u_{n}\right\rangle-J_{2}\left(u_{n}\right)+\left\|\nabla u_{n}\right\|_{L^{2}}^{2}-\frac{(q-2)b}{q}\left\|u_{n}\right\|_{L^{q}}^{q}+\frac{(p-2)\gamma}{2\pi p}V_{0}(u) \\
\nonumber &=&o_{n}(1)+\left\|\nabla u_{n}\right\|_{L^{2}}^{2}-\frac{(q-2)b}{q}\left\|u_{n}\right\|_{L^{q}}^{q} \\
\nonumber &&+(p-2)\left(2I(u_{n})-\left\|\nabla u_{n}\right\|_{L^{2}}^{2}-a\left\|u_{n}\right\|_{L^{2}}^{2}+\frac{2b}{q}\left\|u_{n}\right\|_{L^{q}}^{q}\right) \\
\nonumber &\leq& o_{n}(1)+2(p-2)c+(3-p)\left\|\nabla u_{n}\right\|_{L^{2}}^{2}-\frac{(q-2p+2)b}{q}\left\|u_{n}\right\|_{L^{q}}^{q}\leq C,
\end{eqnarray}
and hence $\{u_{n}\}$ is bounded in $L^{p}(\mathbb{R}^{2})$. By the Gagliardo-Nirenberg inequality, $\{u_{n}\}$ is bounded in $L^{q}(\mathbb{R}^{2})$. As a consequence, it follows from \eqref{26} that $\{u_{n}\}$ is bounded in $L^{2}(\mathbb{R}^{2})$ and hence $\{u_{n}\}$ is bounded in $H^{1}(\mathbb{R}^{2})$. This concludes our proof of Lemma \ref{lemma13}.
\end{proof}

Base on Lemma \ref{lemma13}, we can derive the following compactness property (modulo translations) for the Cerami sequences satisfying \eqref{eq-a0} and \eqref{eq-a1}.
\begin{lem}\label{lemma14}
Assume $p\geq2$, $q\geq 2p-2$ and $q>2$. If $p\geq3$, suppose the sequence $\{u_{n}\}\subset X$ satisfies \eqref{eq-a0} with $\|I'(u_{n})\|_{X'}\left(1+\|u_{n}\|_{X}\right)\rightarrow0$ as $n\rightarrow+\infty$. If $2\leq p<3$, suppose the sequence $\{u_{n}\}\subset X$ satisfies \eqref{eq-a1} and $q\geq2p-1$ or $q<p+1$. Then, by extracting a subsequence, one of the following two conclusions must hold: \\
\noindent (a) $\|u_{n}\|_{H^{1}(\mathbb{R}^{2})}\rightarrow0$ and $I(u_{n})\rightarrow0$ as $n\rightarrow+\infty$. \\
\noindent (b) There exists a sequence $\{x_{n}\}\in\mathbb{R}^{2}$ such that
\begin{equation}\label{eq-a2}
  x_{n}\ast u_{n}\rightarrow u \quad \text{strongly in} \,\, X, \qquad \text{as} \,\, n\rightarrow+\infty,
\end{equation}
where $u\in X\setminus\{0\}$ is a critical point of functional $I$.
\end{lem}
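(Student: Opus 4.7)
The strategy is to combine the boundedness result from Lemma \ref{lemma13} with a vanishing/non-vanishing dichotomy in the style of P.\,L.~Lions, and then import the strong-convergence machinery developed in the completed proof of Theorem \ref{Cerami}. First, in both cases $p\geq 3$ and $2\leq p<3$, Lemma \ref{lemma13} yields that $\{u_n\}$ is bounded in $H^1(\mathbb{R}^2)$. I would then split according to whether
$$\liminf_{n\to\infty}\,\sup_{x\in\mathbb{R}^2}\int_{B_2(x)}u_n^2(y)\,dy$$
is zero or strictly positive.

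In the vanishing scenario, Lemma \ref{vanish} yields $u_n\to 0$ in $L^s(\mathbb{R}^2)$ for every $2<s<+\infty$. In particular $\|u_n\|_{L^q}\to 0$, and the Hardy--Littlewood--Sobolev bound \eqref{2-12} gives $V_2(u_n)\to 0$. The Cerami condition $\langle I'(u_n),u_n\rangle\to 0$ then rewrites as
$$\|u_n\|_{H^1}^2+\tfrac{\gamma}{2\pi}V_1(u_n)=\tfrac{\gamma}{2\pi}V_2(u_n)+b\|u_n\|_{L^q}^{q}+o(1)\longrightarrow 0.$$
Since $\log(1+|x-y|)\geq 0$ implies $V_1(u_n)\geq 0$, both $\|u_n\|_{H^1}\to 0$ and $V_1(u_n)\to 0$, and therefore $I(u_n)\to 0$. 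This is conclusion~(a).

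In the non-vanishing scenario I pick a sequence $\{x_n\}\subset\mathbb{R}^2$ with $\liminf_n\int_{B_2(x_n)}u_n^2\,dy>0$ and set $\tilde u_n:=(-x_n)*u_n$. Since $a$ is a constant, $I$, $I'$ and $J_k$ are $\mathbb{R}^2$-translation invariant, so $\{\tilde u_n\}$ inherits exactly the same hypotheses as $\{u_n\}$. Rellich's theorem yields $\tilde u_n\rightharpoonup u\not\equiv 0$ weakly in $H^1(\mathbb{R}^2)$. From here the three steps of the completed proof of Theorem \ref{Cerami} carry over essentially verbatim. First, the bound on $\langle I'(\tilde u_n),\tilde u_n\rangle$ combined with Lemma \ref{lemma4} upgrades $H^1$-boundedness to $X$-boundedness, so $\tilde u_n\rightharpoonup u$ weakly in $X$. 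Second, Lemma \ref{lemma5} absorbs the cross-term in $\langle I'(\tilde u_n),\tilde u_n-u\rangle$, and the positivity of $B_1(|\tilde u_n|^p,|\tilde u_n|^{p-2}|\tilde u_n-u|^2)$, coupled with Lemma \ref{epsilon} and Lemma \ref{lemma4}, forces $\|\tilde u_n-u\|_X\to 0$. Third, the translation estimates \eqref{3-39}--\eqref{3-41} applied to arbitrary test functions $v\in X$, together with $\|I'(u_n)\|_{X'}\|u_n\|_X\to 0$, imply $I'(u)=0$. Thus $u\in X\setminus\{0\}$ is a critical point of $I$, which is conclusion~(b).

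The principal obstacle is upgrading weak $H^1$-convergence to strong convergence in the $X$-norm. The piece $\|\cdot\|_*$ is not controlled by the Sobolev norm, so no Rellich-type compactness applies directly. The workaround is to exploit positivity of the kernel $\log(1+|\cdot|)$ via Lemma \ref{lemma4}, which converts $B_1$-decay into $\|\cdot\|_*$-decay whenever the test function has a nonzero a.e.~limit, together with Lemma \ref{lemma5} to kill the cross-term in the expansion of $B_1(|\tilde u_n|^p,|\tilde u_n|^{p-2}\tilde u_n(\tilde u_n-u))$. Since the $J_k$ hypothesis has already been absorbed into Lemma \ref{lemma13} to secure $H^1$-boundedness, the remainder of the argument is structurally identical to that of Theorem \ref{Cerami}.
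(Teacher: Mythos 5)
Your proof is correct and follows essentially the same route as the paper: Lemma \ref{lemma13} for $H^1$-boundedness, a Lions-type vanishing/non-vanishing dichotomy yielding conclusion (a) in the vanishing case, and a repetition of Steps 1--3 from the proof of Theorem \ref{Cerami} (using Lemmas \ref{lemma4}, \ref{lemma5}, \ref{epsilon} and the $\mathbb{R}^2$-translation invariance of $I$ when $a$ is constant) for conclusion (b). The paper phrases the dichotomy contrapositively (assume (a) fails for every subsequence, deduce non-vanishing, then obtain (b)), but the logical content and all the key lemmas invoked are identical.
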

\begin{proof}
From Lemma \ref{lemma13}, we already know that $\{u_{n}\}$ is bounded in $H^{1}(\mathbb{R}^{2})$. Now suppose $(a)$ does not hold for any subsequence of $\{u_{n}\}$, then we will prove that $(b)$ must hold up to a subsequence. To this end, we first show the following non-vanishing property:
\begin{equation}\label{38}
  \liminf_{n\rightarrow+\infty}\sup_{y\in\mathbb{R}^{2}}\int_{B_{2}(y)}u_{n}^{2}(x)dx>0.
\end{equation}
Suppose on the contrary that \eqref{38} does not hold, then after passing to a subsequence, it follows that
\begin{equation}\label{39}
  \sup_{y\in\mathbb{R}^{2}}\int_{B_{2}(y)}u_{n}^{2}\rightarrow0, \qquad \text{as} \,\, n\rightarrow+\infty.
\end{equation}
Thus Lions' vanishing lemma (Lemma \ref{vanish}) implies that $u_{n}\rightarrow0$ in $L^{r}(\mathbb{R}^{2})$ for any $2<r<+\infty$. Consequently, by \eqref{2-12}, we have
\begin{equation}\label{40}
  V_{2}(u_{n})\rightarrow0, \qquad \|u_{n}\|_{L^{q}}\rightarrow0, \qquad \text{as} \,\, n\rightarrow+\infty.
\end{equation}
Note that
\begin{equation}\label{41}
  o_{n}(1)=\left\langle I'(u_{n}),u_{n} \right\rangle=\|u_{n}\|_{H^{1}}^{2}+\frac{\gamma}{2\pi}\left(V_{1}(u_{n})-V_{2}(u_{n})\right)-b\|u_{n}\|_{L^{q}}^{q},
\end{equation}
we get
\begin{equation}\label{42}
  \|u_{n}\|_{H^{1}}^{2}+\frac{\gamma}{2\pi}V_{1}(u_{n})=\left\langle I'(u_{n}),u_{n} \right\rangle+\frac{\gamma}{2\pi}V_{2}(u_{n})+b\|u_{n}\|_{L^{q}}^{q}=o_{n}(1),
\end{equation}
and hence
\begin{equation}\label{43}
  \|u_{n}\|_{H^{1}(\mathbb{R}^{2})}\rightarrow0, \quad V_{1}(u_{n})\rightarrow0,  \qquad \text{as} \,\, n\rightarrow+\infty.
\end{equation}
Therefore, we arrive at
\begin{equation}\label{44}
  I(u_{n})=\frac{1}{2}\|u_{n}\|_{H^{1}}^{2}+\frac{\gamma}{4p\pi}\left(V_{1}(u_{n})-V_{2}(u_{n})\right)-\frac{b}{q}\|u_{n}\|_{L^{q}}^{q}\rightarrow0,
\end{equation}
as $n\rightarrow+\infty$, which contradicts our assumption that $(a)$ does not hold for any subsequence of $\{u_{n}\}$. Thus the non-vanishing property \eqref{38} must hold.

Based on the on-vanishing property \eqref{38}, we can find a sequence $\{x_{n}\}\subset\mathbb{R}^{2}$ such that the sequence of the translated functions $\widetilde{u_{n}}:=(-x_{n})\ast u_{n}\in X$ ($n\in\mathbb{N}$) satisfies
\begin{equation}\label{45}
  \liminf_{n\rightarrow+\infty}\int_{B_{2}(0)}\widetilde{u_{n}}^{2}(x)dx>0
\end{equation}
and $\widetilde{u_{n}}\rightharpoonup u\not\equiv 0$ in $H^{1}(\mathbb{R}^{2})$. By passing to a subsequence, we may also assume that $\widetilde{u_{n}}\rightarrow u$ a.e. in $\mathbb{R}^{2}$. Then, we can show that $\{\widetilde{u_{n}}\}$ is bounded in $X$, and
\begin{equation}\label{46}
  \left\langle I'(\widetilde{u_{n}}),\widetilde{u_{n}}-u \right\rangle\rightarrow0, \qquad \text{as} \,\, n\rightarrow+\infty.
\end{equation}
Once \eqref{46} was established, we can prove that $\widetilde{u_{n}}\rightarrow u$ strongly in $X$ as $n\rightarrow+\infty$ and $I'(u)=0$. The proof is entirely similar to the cases that $a(x)$ is a $\mathbb{Z}^{2}$-periodic function, so we omit it here. For details of the rest of the proof, please see the proof of Theorem \ref{Cerami} in subsection 3.1. This finishes the proof of Lemma \ref{lemma14}.
\end{proof}

Now we are ready to complete the proof of Theorem \ref{thm3}.
\begin{proof}[Proof of Theorem \ref{thm3} (completed)]
From Lemma \ref{lemma12} and Lemma \ref{lemma14}, we have already derived the mountain pass solution to \eqref{S-N}, that is, there exists $u\in X\setminus\{0\}$ such that $I'(u)=0$ and $I(u)=c_{mp}>0$. Thus we have finished the proof of (i) in Theorem \ref{thm3}.

Next, we aim to show the existence of ground state solution to \eqref{S-N}. To this end, we define the set of critical points:
\begin{equation}\label{47}
  \mathcal{K}:=\{u\in X\setminus\{0\}\mid I'(u)=0\},
\end{equation}
which is nonempty. Extract a sequence $\{u_{n}\}\subset\mathcal{K}$ such that
\begin{equation}\label{48}
  \lim_{n\rightarrow+\infty}I(u_{n})=c_{g}:=\inf_{u\in\mathcal{K}}I(u)\in[-\infty,c_{mp}].
\end{equation}
Therefore, by the definition of $\mathcal{K}$ and Lemma \ref{Pohozaev}, the sequence $\{u_{n}\}$ satisfies both \eqref{eq-a0} and \eqref{eq-a1}. Moreover, from \eqref{2-49} in Lemma \ref{lemma2}, we infer that
\begin{equation}\label{49}
  \liminf_{n\rightarrow+\infty}\|u_{n}\|_{H^{1}(\mathbb{R}^{2})}\geq\alpha>0.
\end{equation}
Therefore, by Lemma \ref{lemma14}, there exists a sequence $\{x_{n}\}\in\mathbb{R}^{2}$ and a critical point $u\in X\setminus\{0\}$ of $I$ such that, by extracting a subsequence,
\begin{equation}\label{50}
  x_{n}\ast u_{n}\rightarrow u \qquad \text{strongly in} \, X, \qquad \text{as} \,\, n\rightarrow+\infty.
\end{equation}
It follows that $u\in\mathcal{K}$ and
\begin{equation}\label{51}
  I(u)=\lim_{n\rightarrow+\infty}I(x_{n}\ast u_{n})=\lim_{n\rightarrow+\infty}I(u_{n})=c_{g}>-\infty,
\end{equation}
and hence $u$ is the ground state solution to \eqref{S-N}. This completes the proof of (ii) in Theorem \ref{thm3}.

Finally, we will prove (iii) in Theorem \ref{thm3}. In the following, we take $k=1$ \emph{if and only if} $p\geq3$ and $k=2$ \emph{if and only if} $2\leq p<3$. Let us define the sets
\begin{equation}\label{52}
  \mathcal{M}_{k}:=\{u\in X\setminus\{0\}\mid J_{k}(u)=0\}, \qquad k=1,2.
\end{equation}
It follows from Lemma \ref{Pohozaev} that $\mathcal{K}\subset \mathcal{M}_{k}$ for $k=1,2$. For any $u\in X\setminus\{0\}$ and any $t>0$, let $Q_{k}(t,u):=u_{t,k}\in X\setminus\{0\}$ ($k=1,2$), i.e.,
\begin{equation}\label{53}
  Q_{k}(t,u)(x):=u_{t,k}(x)=t^{k}u(tx), \qquad \forall \, x\in\mathbb{R}^{2}.
\end{equation}
We define the minimal energy value on $\mathcal{M}_{k}$ ($k=1,2$) by
\begin{equation}\label{55}
  c_{\mathcal{M}_{k}}:=\inf_{u\in\mathcal{M}_{k}}I(u),
\end{equation}
then it satisfies
\begin{equation}\label{56}
  c_{\mathcal{M}_{k}}\leq c_{g}\leq c_{mp}.
\end{equation}
We also define the minimax value
\begin{equation}\label{57}
  c_{mm,k}:=\inf_{u\in X\setminus\{0\}}\sup_{t>0}I(u_{t,k}), \qquad k=1,2.
\end{equation}

We have the following lemma.
\begin{lem}\label{lemma15}
Suppose $a(x)=a>0$, $p\geq2$, $q\geq2p-2$ and $q>2$. Let $k=1$ \emph{if and only if} $p\geq3$ and $k=2$ \emph{if and only if} $2\leq p<3$. Assume further $q\geq 2p-1$ if $2\leq p<3$.

\smallskip

\noindent(i) For any $u\in X\setminus\{0\}$, there exists a unique $t_{u,k}>0$ such that $Q_{k}(t_{u},u)\in\mathcal{M}_{k}$, which is the unique maximum point of the function $I(Q_{k}(t,u))$.

\smallskip

\noindent(ii) The map $X\setminus\{0\}\rightarrow(0,+\infty)$, $u\mapsto t_{u,k}$ is continuous.

\smallskip

\noindent(iii) Every $u\in\mathcal{M}_{k}$ with $I(u)=c_{\mathcal{M}_{k}}$ is a critical point of I which does not change sign in $\mathbb{R}^{2}$.
\end{lem}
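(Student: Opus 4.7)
Set $\varphi_u(t):=I(u_{t,k})$. A direct differentiation of the explicit formula (63) in Lemma \ref{lemma11}, together with the definition \eqref{af} of $J_k$ (recall that $J_k(u)$ was derived precisely as the Pohozaev--Nehari combination $k\langle I'(u),u\rangle - P(u)$ in Lemma \ref{Pohozaev}), gives the fundamental identity
\[
t\,\varphi_u'(t)\;=\;J_k(u_{t,k}),\qquad t>0.
\]
Thus $Q_k(t,u)\in\mathcal{M}_k$ is equivalent to $t$ being a critical point of $\varphi_u$. The choices $k=1$ for $p\geq 3$ and $k=2$ for $2\leq p<3$, together with $q\geq 2p-2$ (respectively $q\geq 2p-1$ when $k=2$), are calibrated so that the exponents $2k$, $2(k-1)$, $kq-2$, $2(kp-2)$ and the $\log t$-correction arising from $V_0(u_{t,k})=t^{2(kp-2)}[V_0(u)-\log t\cdot\|u\|_{L^p}^{2p}]$ admit a single sign change, which is the heart of the argument.

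\textbf{Proof of (i).} By Lemma \ref{lemma11} we have $\varphi_u(t)\to-\infty$ as $t\to+\infty$. Inspecting (63) near $t=0^+$: when $k=2$ the leading term is $\tfrac{at^2}{2}\|u\|_{L^2}^2>0$ so $\varphi_u(0^+)=0$ and $\varphi_u>0$ for small $t>0$; when $k=1$ the leading term is the positive constant $\tfrac{a}{2}\|u\|_{L^2}^2$, and the remaining terms satisfy $\varphi_u'(t)>0$ for $t$ small (the gradient term $t\|\nabla u\|_{L^2}^2$ dominates for $p>3$, and for $p=3$ the $-\log t$ factor in the nonlocal contribution drives $\varphi_u'$ positive). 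In either case $\varphi_u$ attains a positive maximum at some interior $t_{u,k}\in(0,+\infty)$, which by the identity above lies in $\mathcal{M}_k$. For uniqueness, I plan to study $t\mapsto J_k(u_{t,k})$ directly: dividing by $t^{2(k-1)}$ (or by $t^2$ when $k=1$), the resulting expression is a strictly monotone combination (in a case analysis over $p=3$, $p>3$, $p=2$, $2<p<3$) of increasing and decreasing functions of $t$, yielding exactly one positive zero. This sign analysis is the main technical obstacle and is the reason for the hypothesis $q\geq 2p-1$ in the range $2\leq p<3$.

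\textbf{Proof of (ii).} Given $u_n\to u$ in $X\setminus\{0\}$, Lemma \ref{lemma0}(2) and the explicit form (63) show that $(t,v)\mapsto\varphi_v(t)$ is jointly continuous. Arguing as in (i) locally in $v$ produces a uniform bound $t_{u_n,k}\in[\delta,\delta^{-1}]$; any cluster point $t_*$ satisfies $J_k(u_{t_*,k})=0$, and uniqueness from (i) forces $t_*=t_{u,k}$. (Equivalently, the zero $t_{u,k}$ is simple, since $\partial_t J_k(u_{t,k})|_{t=t_{u,k}}=t_{u,k}\varphi_u''(t_{u,k})<0$ at the strict maximum, so the implicit function theorem applies.)

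\textbf{Proof of (iii).} Let $u\in\mathcal{M}_k$ with $I(u)=c_{\mathcal{M}_k}$. Since $V_0$, $V_1$, $V_2$ and the $L^q$-term depend only on $|u|$, both $J_k$ and $I$ are unchanged by $u\mapsto|u|$, so we may assume $u\geq 0$. Suppose, for contradiction, that some $v\in X$ satisfies $\langle I'(u),v\rangle<0$. Setting $w_s:=u+sv$ for small $s>0$, (ii) gives $t_{w_s,k}\to t_{u,k}=1$, and the curve $s\mapsto Q_k(t_{w_s,k},w_s)\in\mathcal{M}_k$ is differentiable at $s=0$. Using $t\varphi'(t)=J_k(u_{t,k})$, $J_k(u)=0$ and the chain rule, the contribution of the derivative of $t_{w_s,k}$ vanishes at $s=0$, so
\[
\left.\frac{d}{ds}\right|_{s=0} I(Q_k(t_{w_s,k},w_s))\;=\;\langle I'(u),v\rangle\;<\;0,
\]
producing elements of $\mathcal{M}_k$ with energy below $c_{\mathcal{M}_k}$, a contradiction. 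Hence $I'(u)=0$. Finally Lemma \ref{lemma1} gives $u\in W^{2,r}(\mathbb{R}^2)$ and a pointwise equation of the form $-\Delta u+\zeta(x)u=0$ with $\zeta\in L^{\infty}_{\mathrm{loc}}$; since $u\geq 0$ and $u\not\equiv 0$, Harnack's inequality yields $u>0$ on $\mathbb{R}^2$, so $u$ does not change sign.
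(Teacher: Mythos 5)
Your parts (i) and (ii) follow essentially the same route as the paper: the identity $t\,\varphi_u'(t)=J_k(u_{t,k})$, the limiting behavior at $t\to 0^+$ and $t\to+\infty$, and a calculus statement (the paper isolates it as Lemma \ref{lemma16} and defers the case analysis to \cite{R}) are what you rely on as well. Like the paper, you leave the monotone-combination argument as a sketch, so no new gap is opened there, and your sequential continuity argument for (ii) matches what the paper does.

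Your part (iii) is where the routes genuinely diverge, and this is also where you have a real gap. You reduce the Lagrange-multiplier computation to the chain rule along the curve $s\mapsto Q_k(t_{w_s,k},w_s)$, which requires $s\mapsto t_{w_s,k}$ to be differentiable at $s=0$. You justify this by asserting $\partial_t J_k(u_{t,k})\big|_{t=t_{u,k}}=t_{u,k}\,\varphi_u''(t_{u,k})<0$ and invoking the implicit function theorem, but Lemma \ref{lemma16} (the calculus lemma you are implicitly relying on for uniqueness) only gives the sign change $\varphi_u'>0$ for $t<t_{u,k}$ and $\varphi_u'<0$ for $t>t_{u,k}$; this yields $\varphi_u''(t_{u,k})\le 0$, not $<0$. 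Nondegeneracy of the maximizer is a separate claim that you have not established, and the analytic form of $\varphi_u$ does not obviously rule out a higher-order critical point. The paper sidesteps this entirely with a Szulkin--Weth type argument: it never differentiates $t_{u,k}$, instead using only the \emph{continuity} from (ii) together with the $C^1$ estimate $I(u+\omega+\tau\hat v)\le I(u+\omega)-\epsilon\tau$ for $\omega,\hat v$ close to $0,v$; setting $\omega_k=Q_k(t_{\tau,k},u)-u$ and $\hat v_k=Q_k(t_{\tau,k},v)$ then produces a point of $\mathcal{M}_k$ below $c_{\mathcal{M}_k}$. You should either prove the nondegeneracy $\varphi_u''(t_{u,k})<0$ (which would strengthen Lemma \ref{lemma16}) or replace your chain-rule step with the paper's continuity-only estimate, which is more robust and requires nothing beyond what (ii) already delivers. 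The positivity conclusion via $u\mapsto|u|$, elliptic regularity, and Harnack is identical to the paper's.
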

\begin{proof}
For any $u\in X\setminus\{0\}$, consider the function $\psi_{u,k}(t):=I(Q_{k}(u,t))$ ($k=1,2$). Through direct calculations, we have
\begin{equation}\label{54}\begin{aligned}
\psi_{u,k}(t)=& \frac{t^{2k}}{2}\int_{\mathbb{R}^{2}}|\nabla u|^{2} \mathrm{d} x+\frac{t^{2(k-1)}}{2} \int_{\mathbb{R}^{2}}au^{2} \mathrm{d} x-\frac{t^{2(kp-2)}\gamma}{4p\pi}\log t\left(\int_{\mathbb{R}^{2}}|u|^{p} \mathrm{d} x\right)^{2} \\
&+\frac{t^{2(kp-2)}\gamma}{4p\pi}\int_{\mathbb{R}^{2}} \int_{\mathbb{R}^{2}} \log (|x-y|)|u|^{p}(x)|u|^{p}(y) \mathrm{d} x \mathrm{d} y-\frac{t^{kq-2}b}{q}\int_{\mathbb{R}^{2}}|u|^{q}\mathrm{d} x.
\end{aligned}\end{equation}

Now we need the following calculus lemma.
\begin{lem}\label{lemma16}
Suppose $p\geq2$, $q\geq2p-2$ and $q>2$. Let $k=1$ \emph{if and only if} $p\geq3$ and $k=2$ \emph{if and only if} $2\leq p<3$. Assume further $q\geq 2p-1$ if $2\leq p<3$. If $C_{1},C_{2},C_{4}>0$, $C_{5}\geq0$, $C_{3}\in\mathbb{R}$, then the function $f_{k}:(0,+\infty)\rightarrow\mathbb{R}$ defined by
\begin{equation}\label{58}
  f_{k}(t):=C_{1}t^{2(k-1)}+C_{2}t^{2k}+C_{3}t^{2(kp-2)}-C_{4}t^{2(kp-2)}\log t-C_{5}t^{kq-2}
\end{equation}
has a unique positive critical point $t_{k}$ such that $f_{k}'(t)>0$ for any $t<t_{k}$ and $f_{k}'(t)<0$ for any $t>t_{k}$.
\end{lem}
\begin{proof}
This lemma can be proved through elementary calculus and direct calculations, so we omit it. For more details in the proof, please refer to \cite{R}.
\end{proof}

By Lemma \ref{lemma16}, the function $\psi_{u,k}$ has a unique critical point $t_{u,k}>0$ such that
\begin{equation}\label{59}
\psi_{u,k}^{\prime}(t)>0 \quad \text { for } t \in\left(0, t_{u,k}\right) \quad \text { and } \quad \psi_{u,k}^{\prime}(t)<0 \quad \text { for } t>t_{u,k}.
\end{equation}
Through direct calculations, we can also obtain $\psi_{u,k}^{\prime}(t)=\frac{J_{k}(Q_{k}(u,t))}{t}$ for any $t>0$, and hence (i) holds. Due to the continuity of the map $X\setminus\{0\}\rightarrow\mathbb{R}$, $u\mapsto\psi'_{u,k}(t)$ for arbitrarily fixed $t>0$ and \eqref{59}, we can infer that the map $X\setminus\{0\}\rightarrow(0,+\infty)$, $u\mapsto t_{u,k}$ is continuous, and hence gives (ii).

Now we only need to prove (iii). Let $u\in\mathcal{M}_{k}$ such that $I(u)=c_{\mathcal{M}_{k}}$. We will show that $u$ is a critical point of $I$ via contradiction arguments. If not, assume that there exists a $v\in X$ such that $\left\langle I'(u),v\right\rangle<0$. Since $I$ is a $C^{1}$-functional on $X$, we could choose an $\epsilon>0$ sufficiently small such that, for any $\tau\in(0,\epsilon)$, every $\omega\in X$ with $\|\omega\|_{X}<\epsilon$ and every $\hat{v}\in X$ with $\|\hat{v}-v\|_{X}<\epsilon$,
\begin{equation}\label{60}
I(u+\omega+\tau\hat{v})\leqslant I(u+\omega)-\epsilon\tau.
\end{equation}
Since $u\in\mathcal{M}_{k}$, one has $t_{u,k}=1$. Combining this with (ii), we may choose $\tau\in(0,\epsilon)$ small enough such that, for $t_{\tau,k}:=t_{u+\tau v,k}$,
\begin{equation}\label{61}
\quad\left\|Q_{k}\left(t_{\tau,k}, u\right)-u\right\|_{X}<\epsilon \quad \text { and } \quad\left\|Q_{k}\left(t_{\tau,k}, v\right)-v\right\|_{X}<\epsilon,
\end{equation}
Let $\omega_{k}=Q_{k}\left(t_{\tau,k}, u\right)-u$ and $\hat{v}_{k}=Q_{k}\left(t_{\tau,k},v\right)$, then \eqref{60} and \eqref{61} implies
\begin{equation}\label{62}\begin{aligned}
I\left(Q_{k}\left(t_{\tau,k},u+\tau v\right)\right)&=I\left(Q_{k}\left(t_{\tau,k}, u\right)+\tau Q_{k}\left(t_{\tau,k}, v\right)\right)
=I(u+\omega_{k}+\tau\hat{v}_{k}) \\
&\leqslant I(u+\omega_{k})-\epsilon\tau<I(u+\omega_{k})=I\left(Q_{k}\left(t_{\tau,k}, u\right)\right) \leqslant I(u)=c_{\mathcal{M}_{k}}.
\end{aligned}\end{equation}
Since $Q_{k}\left(t_{\tau,k},u+\tau v\right)\in\mathcal{M}_{k}$, this contradicts the definition of $c_{\mathcal{M}_{k}}$ and hence $I'(u)=0$.

Finally, if $u\in\mathcal{M}_{k}$ is a minimizer of $I|_{\mathcal{M}_{k}}$, note that $I(u)=I(|u|)$ and $J(u)=J(|u|)$, hence $|u|\in\mathcal{M}_{k}$ is also a minimizer of $I|_{\mathcal{M}_{k}}$. Thus $|u|$ is a critical point of $I$ and satisfies the Euler-Lagrange equation \eqref{S-N}. By Lemma \ref{lemma1}, we have $|u|\in W^{2,r}(\mathbb{R}^{2})$ ($\forall \, 1\leq r<+\infty$) is a strong solution to elliptic equation of the form $-\Delta |u|+\zeta(x)|u|=0$ with $\zeta\in L_{loc}^{\infty}(\mathbb{R}^{2})$. Since $|u|\not\equiv0$, by Harnack inequality (see \cite{GT}), we have $|u|>0$ on $\mathbb{R}^{2}$ and hence $u$ does not change sign. This concludes our proof of Lemma \ref{lemma15}.
\end{proof}

\begin{lem}\label{lemma17}
Suppose $a(x)=a>0$, $p\geq2$, $q\geq2p-2$ and $q>2$. Let $k=1$ \emph{if and only if} $p\geq3$ and $k=2$ \emph{if and only if} $2\leq p<3$. Assume further $q\geq 2p-1$ if $2\leq p<3$. Then, we have $c_{g}=c_{\mathcal{M}_{k}}=c_{mm,k}=c_{mp}$.
\end{lem}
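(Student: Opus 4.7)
The plan is to prove $c_g=c_{\mathcal{M}_k}=c_{mm,k}=c_{mp}$ by establishing the chain
$c_{\mathcal{M}_k}\leq c_g\leq c_{mp}\leq c_{mm,k}=c_{\mathcal{M}_k}$,
which forces the four quantities to coincide. The first inequality is immediate: every $u\in\mathcal{K}$ satisfies $\langle I'(u),u\rangle=0$ and, by Lemma~\ref{Pohozaev}, $P(u)=0$, so $J_k(u)=k\langle I'(u),u\rangle-P(u)=0$ for $k=1,2$, whence $\mathcal{K}\subset\mathcal{M}_k$. The second inequality follows from part~(i) of Theorem~\ref{thm3}, which supplies a critical point at level $c_{mp}$. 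The equality $c_{\mathcal{M}_k}=c_{mm,k}$ is a direct reading of Lemma~\ref{lemma15}(i): the unique $t_{u,k}$ associated to each $u\in X\setminus\{0\}$ yields $\sup_{t>0}I(u_{t,k})=I(Q_k(t_{u,k},u))\in\mathcal{M}_k$, while each $v\in\mathcal{M}_k$ equals $Q_k(1,v)$ and realizes $\sup_{t>0}I(v_{t,k})=I(v)$, so both infima range over the same set $\{I(v):v\in\mathcal{M}_k\}$.

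The remaining, and delicate, inequality is $c_{mp}\leq c_{mm,k}$. The strategy is: for every $u\in X\setminus\{0\}$, produce a path $\gamma\in\Gamma$ with $\max_{s\in[0,1]}I(\gamma(s))\leq\sup_{t>0}I(u_{t,k})$, so that taking the infimum over $u$ gives the desired bound. In the case $k=2$ (so $2\leq p<3$), the scaling $u_{t,2}(x)=t^2u(tx)$ has the crucial property that $\|u_{t,2}\|_{H^1}\to 0$ and $\|u_{t,2}\|_\ast\to 0$ as $t\to 0^+$, as a direct computation shows. Choosing $T>0$ large via Lemma~\ref{lemma11} with $I(u_{T,2})<0$ and setting $\gamma(s)=u_{sT,2}$ on $(0,1]$ with $\gamma(0)=0$ produces a continuous element of $\Gamma$; the unique-maximum property from Lemma~\ref{lemma15}(i) then yields $\max_s I(\gamma(s))=\sup_{t\in(0,T]}\psi_{u,2}(t)=\sup_{t>0}I(u_{t,2})$, as required.

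The case $k=1$ (so $p\geq 3$) is the main obstacle, because the scaling $u_{t,1}(x)=tu(tx)$ preserves $\|u\|_{L^2}$, so $u_{t,1}\not\to 0$ in $X$ as $t\to 0^+$ and the naive one-parameter dilation path is not continuous at the origin. The proposed workaround is to lift to the auxiliary Banach space $\tilde X=\mathbb R\times X$ carrying the functional $\phi_1(s,u)=I(\rho_1(s,u))=I(u_{e^s,1})$ introduced in the proof of Lemma~\ref{lemma12}, and to use the identification $c_{mp}=c^\ast_{mp,1}$ established there. For any $v\in\mathcal{M}_1$, pick $s_1$ so large that $\phi_1(s_1,v)=\psi_{v,1}(e^{s_1})<0$ (possible by Lemma~\ref{lemma11}) and form the two-segment path in $\tilde X$ obtained by concatenating $\tilde\gamma_1(\tau)=(0,\tau v)$ with $\tilde\gamma_2(\tau)=(\tau s_1,v)$, both on $[0,1]$. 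On $\tilde\gamma_2$, $\phi_1=\psi_{v,1}(e^{\tau s_1})$ is bounded by $\psi_{v,1}(1)=I(v)$, since $t_{v,1}=1$. On $\tilde\gamma_1$, $\phi_1=\varphi_v(\tau):=I(\tau v)$, and the key technical claim is $\max_{\tau\in[0,1]}\varphi_v(\tau)\leq I(v)$. For a minimizer $v$ of $I|_{\mathcal{M}_1}$ (whose existence is obtained by a compactness argument patterned on Lemmas~\ref{lemma13}--\ref{lemma14}), Lemma~\ref{lemma15}(iii) forces $v\in\mathcal{K}$, hence $\varphi_v'(1)=\langle I'(v),v\rangle=0$. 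Combining this Nehari identity with the Pohozaev identity $P(v)=0$ and the structural formula
$\varphi_v'(\tau)/\tau=\|v\|_{H^1}^2+\tfrac{\gamma}{2\pi}V_0(v)\tau^{2p-2}-b\|v\|_{L^q}^q\tau^{q-2}$
allows a calculus analysis in the spirit of Lemma~\ref{lemma16} that pins down the shape of $\varphi_v$ on $(0,\infty)$ and gives $\max_{\tau\in[0,1]}\varphi_v(\tau)=\varphi_v(1)=I(v)$. Consequently $\max\phi_1\circ\tilde\gamma\leq I(v)$, so $c_{mp}\leq I(v)=c_{\mathcal{M}_1}$ and the chain closes.

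The principal obstacle is thus the $k=1$ analysis: existence of a minimizer (required to invoke Lemma~\ref{lemma15}(iii) and the Nehari identity) and the careful sign/monotonicity analysis of $\varphi_v$ under the weaker assumption $q\geq 2p-2$ (where Lemma~\ref{lemma3} does not apply directly), ruling out a local-minimum configuration of $\varphi_v$ at $\tau=1$ by exploiting the Pohozaev identity. Once these ingredients are in place, $c_{\mathcal{M}_k}\leq c_g\leq c_{mp}\leq c_{mm,k}=c_{\mathcal{M}_k}$ forces the four quantities to coincide, completing the proof.
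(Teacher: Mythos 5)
Your chain $c_{\mathcal{M}_k}\leq c_g\leq c_{mp}\leq c_{mm,k}=c_{\mathcal{M}_k}$ is exactly the paper's, and your handling of the first three links ($\mathcal{K}\subset\mathcal{M}_k$ via the Pohozaev identity, part (i) of Theorem~\ref{thm3} for $c_g\leq c_{mp}$, and Lemma~\ref{lemma15}(i) for $c_{\mathcal{M}_k}=c_{mm,k}$) coincides with the paper's. You also correctly spot the one genuine subtlety: for $k=1$ the dilation $t\mapsto u_{t,1}$ preserves the $L^2$-norm, so the one-parameter scaling path is not continuous at $t=0$ and the naive argument that works for $k=2$ does not apply. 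The paper glosses over this with a bare ``Lemma~\ref{lemma11} yields $c_{mp}\leq c_{mm,k}$'', so your instinct to lift to $\tilde X$ and use $c_{mp}=c^{\ast}_{mp,1}$ from Lemma~\ref{lemma12} is the right machinery.

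The problem is the last step of your $k=1$ argument, which hinges on a minimizer $v$ of $I|_{\mathcal{M}_1}$. That existence is \emph{not} available at this point in the development: it is precisely one of the conclusions of Theorem~\ref{thm3}(iii), which depends on this very lemma. Your parenthetical appeal to ``a compactness argument patterned on Lemmas~\ref{lemma13}--\ref{lemma14}'' does not close this gap, since those lemmas treat Cerami-type sequences (bounded $I$, vanishing $J_k$, and crucially vanishing $\|I'\|_{X'}(1+\|\cdot\|_X)$), not constrained minimizing sequences on $\mathcal{M}_1$; passing from one to the other would require an Ekeland-type argument on the constraint plus a Lagrange-multiplier analysis, none of which the paper has set up. Moreover, even granting the minimizer, the claim $\max_{\tau\in[0,1]}I(\tau v)=I(v)$ is not automatic under $q\geq 2p-2$: with $\varphi'_v(\tau)/\tau=\|v\|_{H^1}^2+\tfrac{\gamma}{2\pi}\tau^{2p-2}V_0(v)-b\,\tau^{q-2}\|v\|_{L^q}^q$ and $\varphi'_v(1)=0$, the case $V_0(v)>0$ with $q$ close to $2p-2$ allows $\varphi'_v$ to change sign twice, and ruling out a local minimum at $\tau=1$ requires a sign analysis you only sketch.

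The circularity is avoidable, and the fix stays within your $\tilde X$ framework but works with \emph{arbitrary} $u\in X\setminus\{0\}$ rather than a minimizer. Take $v:=u_{\varepsilon,1}$ for small $\varepsilon>0$ in your two-segment path. Then $(u_{\varepsilon,1})_{t,1}=u_{\varepsilon t,1}$, so the second segment stays below $\sup_{t>0}I(u_{t,1})$. For the first segment, a direct computation gives, uniformly in $\tau\in[0,1]$,
\begin{equation*}
I(\tau u_{\varepsilon,1})=\frac{\tau^{2}}{2}\Big(\varepsilon^{2}\|\nabla u\|_{L^2}^{2}+a\|u\|_{L^2}^{2}\Big)
+\frac{\gamma\tau^{2p}}{4p\pi}\,\varepsilon^{2p-4}\Big(V_0(u)-\log\varepsilon\,\|u\|_{L^p}^{2p}\Big)
-\frac{b\,\tau^{q}}{q}\,\varepsilon^{q-2}\|u\|_{L^q}^{q},
\end{equation*}
and since $p\geq 3$ forces $2p-4\geq 2>0$ (and $q>2$), all terms but $\frac{\tau^2}{2}a\|u\|_{L^2}^2$ vanish as $\varepsilon\to 0^{+}$. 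Hence $\max_{\tau\in[0,1]}I(\tau u_{\varepsilon,1})\to\frac{a}{2}\|u\|_{L^2}^2=\lim_{t\to 0^{+}}I(u_{t,1})\leq\sup_{t>0}I(u_{t,1})$. This yields $c^{\ast}_{mp,1}\leq\sup_{t>0}I(u_{t,1})$ for every $u$, whence $c_{mp}\leq c_{mm,1}$, without invoking any minimizer or Nehari/Pohozaev identities. This is the missing justification for the paper's brief assertion and would complete your argument.
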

\begin{proof}
From \eqref{56}, we have $c_{\mathcal{M}_{k}}\leq c_{g}\leq c_{mp}$. Lemma \ref{lemma11} yields that $c_{mp}\leq c_{mm,k}$, while Lemma \ref{lemma15} (i) tells us that $c_{\mathcal{M}_{k}}=c_{mm,k}$. Thus we must have $c_{g}=c_{\mathcal{M}_{k}}=c_{mm,k}=c_{mp}$. This finishes the proof of Lemma \ref{lemma17}.
\end{proof}

Consequently, (iii) in Theorem \ref{thm3} follows immediately from (i) and (ii) in Theorem \ref{thm3}, Lemma \ref{lemma15} and Lemma \ref{lemma17}. This concludes our proof of Theorem \ref{thm3}.
\end{proof}

\section{Symmetry of positive solutions}

In this section, we will carry out the proof of Theorem \ref{thm2} and Corollary \ref{cor2}.

\begin{proof}[Proof of Theorem \ref{thm2}]
Assume $p\geq2$. For any given classical solution $(u,w)$ to \eqref{g-S-P} and \eqref{conditions}, by Agmon's theorem (see \cite{A}), there exist some $A>0$ and $C>0$ such that
\begin{equation}\label{5-0}
  0<u(x)\leq Ce^{-A|x|}, \qquad \text{as} \,\, |x|\rightarrow\infty.
\end{equation}
Moreover, by the Liouville theorem, we have, there exists a constant $c\in\mathbb{R}$ such that
\begin{equation}\label{5-1}
w(x)=-\frac{1}{2\pi}\int_{\mathbb{R}^{2}}\log(|x-y|)u^{p}(y)dy+c, \qquad \forall \,\, x \in \mathbb{R}^{2}.
\end{equation}

We will prove Theorem \ref{thm2} by applying the method of moving planes (see \cite{CD,CL,CLO,CW,DFHQW,DFQ,GNN,MZ}). To this end, we need some standard notations. For arbitrary $\lambda\in\mathbb{R}$, we set
\begin{equation}\label{5-2}
\Sigma_{\lambda} :=\left\{x \in \mathbb{R}^{2} : x_{1}>\lambda\right\}, \qquad T_{\lambda}=\partial \Sigma_{\lambda}=\left\{x \in \mathbb{R}^{2} : x_{1}=\lambda\right\}.
\end{equation}
For any $x=(x_{1},x_{2})\in\mathbb{R}^{2}$, let $x^{\lambda}:=(2\lambda-x_{1},x_{2})$ be the reflection of $x$ about the plane $T_{\lambda}$. For any $\lambda \in \mathbb{R}$, we define the following notations:
\begin{equation}\label{5-3}
u^{\lambda}(x)=u\big(x^{\lambda}\big), \qquad w^{\lambda}(x)=w\big(x^{\lambda}\big), \qquad \forall \,\, x \in \mathbb{R}^{2},
\end{equation}
\begin{equation}\label{5-4}
u_{\lambda}(x)=u^{\lambda}(x)-u(x), \qquad w_{\lambda}(x)=w^{\lambda}(x)-w(x), \qquad \forall \,\, x \in \Sigma_{\lambda}.
\end{equation}
Then the difference functions $(u_{\lambda},w_{\lambda})$ satisfy the following system:
\begin{equation}\label{5-5}
\left\{\begin{array}{l}{-\Delta u_{\lambda}+a_{0}u_{\lambda}=\gamma w_{\lambda}\left(u^{\lambda}\right)^{p-1}+\left(\gamma(p-1)w\psi_{\lambda}^{p-2}+\sigma_{\lambda}\right)u_{\lambda} \quad \text { in } \Sigma_{\lambda},} \\ {-\Delta w_{\lambda}=p\xi_{\lambda}^{p-1}u_{\lambda} \quad \text { in } \Sigma_{\lambda},}\end{array}\right.
\end{equation}
where $\psi_{\lambda}(x)$ and $\xi_{\lambda}(x)$ are valued between $u(x^{\lambda})$ and $u(x)$ by mean value theorem, and the function $\sigma_{\lambda}$ is given by
\begin{equation}\label{5-6}
\sigma_{\lambda}(x):=\left\{\begin{array}{ll}{\frac{f\left(u^{\lambda}(x)\right)-f(u(x))}{u_{\lambda}(x)}+a_{0},} & {\text { if } u_{\lambda}(x) \neq 0,} \\ {a_{0},} & {\text { if } u_{\lambda}(x)=0.}\end{array}\right.
\end{equation}
Since $f$ is Lipschitz on $\big(0,\|u\|_{L^{\infty}(\mathbb{R}^{2})}\big]$, there exists a constant $C=C(f,\|u\|_{L^{\infty}(\mathbb{R}^{2})})>0$ such that
\begin{equation}\label{5-7}
\left\|\sigma_{\lambda}\right\|_{L^{\infty}\left(\Sigma_{\lambda}\right)} \leq C, \qquad \forall \,\, \lambda \in \mathbb{R}.
\end{equation}
Furthermore, for $p>2$, the assumption \eqref{monotonicity} on $f$ implies that
\begin{equation}\label{5-13}
  \sigma_{\lambda}(x)\leq0, \qquad \text{if} \,\,\, 0<u^{\lambda}(x)\neq u(x)<\epsilon_{0}.
\end{equation}
From \eqref{5-1}, we derive that
\begin{equation}\label{5-8}
w_{\lambda}(x)=\frac{1}{2\pi}\int_{\Sigma_{\lambda}}\log\left(\frac{\left|x-y^{\lambda}\right|}{|x-y|}\right)\left((u^{\lambda})^{p}(y)-u^{p}(y)\right)dy, \qquad \forall \,\, x \in \Sigma_{\lambda}.
\end{equation}
It follows directly from \eqref{5-8} that $u_{\lambda}\geq0$ in $\Sigma_{\lambda}$ implies immediately $w_{\lambda} \geq 0$ in $\Sigma_{\lambda}$. Moreover, note that
\begin{equation}\label{5-9}
0 \leq \log \frac{\left|x-y^{\lambda}\right|}{|x-y|} \leq \log \left(1+\frac{\left|y-y^{\lambda}\right|}{|x-y|}\right) \leq \frac{\left|y-y^{\lambda}\right|}{|x-y|}=\frac{2\left(y_{1}-\lambda\right)}{|x-y|}, \qquad \forall \,\, x, y \in \Sigma_{\lambda},
\end{equation}
we can infer from the integral formula \eqref{5-8} that, for any $x \in \Sigma_{\lambda}$,
\begin{equation}\label{5-10}
w_{\lambda}^{-}(x) \geq \frac{1}{2\pi}\int_{\Sigma^{-}_{\lambda}}\frac{2\left(y_{1}-\lambda\right)}{|x-y|}\left((u^{\lambda})^{p}(y)-u^{p}(y)\right)dy \geq \frac{p}{\pi}\int_{\Sigma^{-}_{\lambda}}\frac{y_{1}-\lambda}{|x-y|}u^{p-1}(y)u_{\lambda}^{-}(y)dy,
\end{equation}
where $v^{-}:=\min\{v,0\}$ denotes the negative part of a function $v$ and the set
\begin{equation}\label{5-11}
  \Sigma^{-}_{\lambda}:=\{x\in\Sigma_{\lambda}:\,u_{\lambda}(x)<0\}.
\end{equation}
Thus, by Hardy-Littlewood-Sobolev inequality, we get
\begin{equation}\label{5-12}
\left\|w_{\lambda}^{-}\right\|_{L^{2}\left(\Sigma_{\lambda}\right)}\leq C\left(\int_{\Sigma^{-}_{\lambda}}\left(y_{1}-\lambda\right)^{2}u^{2(p-1)}(y)dy\right)^{\frac{1}{2}}\left\|u_{\lambda}^{-}\right\|_{L^{2}\left(\Sigma_{\lambda}\right)}, \qquad \forall \,\, \lambda \in \mathbb{R}.
\end{equation}
By \eqref{conditions}, \eqref{5-0}, \eqref{5-1}, \eqref{5-7} and \eqref{5-13}, we conclude that there exists a $\overline{\lambda}>0$ sufficiently large such that, for any $\lambda\geq\overline{\lambda}$,
\begin{equation}\label{5-14}
  \gamma(p-1)w\psi_{\lambda}^{p-2}+\sigma_{\lambda}\leq0 \qquad \text{in} \,\, \Sigma^{-}_{\lambda}.
\end{equation}
Now, multiplying the first equation in system \eqref{5-5} by $u^{-}_{\lambda}$ and integrating, we can deduce from \eqref{5-12} and \eqref{5-14} that
\begin{eqnarray}\label{5-15}
a_{0}\left\|u_{\lambda}^{-}\right\|_{L^{2}\left(\Sigma_{\lambda}\right)}^{2} &\leq& \int_{\Sigma^{-}_{\lambda}}\left[\gamma w_{\lambda} \big(u^{\lambda}\big)^{p-1}u_{\lambda}^{-}+\left(\gamma(p-1)w\psi_{\lambda}^{p-2}+\sigma_{\lambda}\right)\left(u_{\lambda}^{-}\right)^{2}\right]dx \\
\nonumber &\leq& \gamma\int_{\Sigma^{-}_{\lambda}} w_{\lambda}^{-}\big(u^{\lambda}\big)^{p-1}u_{\lambda}^{-}dx
\leq\gamma\left\|w_{\lambda}^{-}\right\|_{L^{2}\left(\Sigma_{\lambda}\right)}\big\|u^{\lambda}\big\|^{p-1}_{L^{\infty}\left(\Sigma_{\lambda}\right)}
\left\|u_{\lambda}^{-}\right\|_{L^{2}\left(\Sigma_{\lambda}\right)} \\
\nonumber &\leq& C\left(\int_{\Sigma^{-}_{\lambda}}\left(y_{1}-\lambda\right)^{2}u^{2(p-1)}(y)dy\right)^{\frac{1}{2}}\|u\|^{p-1}_{L^{\infty}\left(\mathbb{R}^{2}\right)}
\left\|u_{\lambda}^{-}\right\|_{L^{2}\left(\Sigma_{\lambda}\right)}^{2}.
\end{eqnarray}
It is clear that there exists a $\lambda_{0}\geq\overline{\lambda}$ large enough such that
\begin{equation}\label{5-16}
 C\left(\int_{\Sigma^{-}_{\lambda}}\left(y_{1}-\lambda\right)^{2}u^{2(p-1)}(y)dy\right)^{\frac{1}{2}}\|u\|^{p-1}_{L^{\infty}\left(\mathbb{R}^{2}\right)}\leq\frac{a_{0}}{2} \qquad \text {for any} \,\, \lambda\geq \lambda_{0},
\end{equation}
and hence, \eqref{5-15} implies that $u^{-}_{\lambda}\equiv0$, i.e., $u_{\lambda}\geq0$ in $\Sigma_{\lambda}$ for any $\lambda\geq\lambda_{0}$.

Now, we move the plane $T_{\lambda}$ to the left as long as $u_{\lambda}\geq0$ in $\Sigma_{\lambda}$ until its limiting position. We define
\begin{equation}\label{5-17}
  \lambda_{1}:=\inf\{\lambda\in\mathbb{R}:\,u_{\lambda}\geq0 \,\,\, \text{in} \,\,\, \Sigma_{\lambda}\}<+\infty.
\end{equation}
It follows from \eqref{5-0} that $\lambda_{1}>-\infty$. By continuity, we have
\begin{equation}\label{5-18}
  u_{\lambda_{1}}\geq0 \quad \text{and} \quad w_{\lambda_{1}}\geq0 \qquad \text{in} \,\,\Sigma_{\lambda_{1}}.
\end{equation}

Next, we will show that
\begin{equation}\label{5-19}
  u_{\lambda_{1}}\equiv w_{\lambda_{1}}\equiv0 \qquad \text{in} \,\,\Sigma_{\lambda_{1}}.
\end{equation}
In fact, suppose \eqref{5-19} does not hold, then we deduce from \eqref{5-8} that $u_{\lambda_{1}}\not\equiv0$ and $w_{\lambda_{1}}>0$ in $\Sigma_{\lambda_{1}}$, and hence Hopf Lemma yields that
\begin{equation}\label{5-20}
\frac{\partial w_{\lambda_{1}}}{\partial x_{1}}=-2\frac{\partial w}{\partial x_{1}}>0 \quad \text { on } \,\, T_{\lambda_{1}}.
\end{equation}
By the first equation in system \eqref{5-5}, we have
\begin{equation}\label{5-21}
-\Delta u_{\lambda_{1}}+\left(a_{0}-\gamma(p-1)w\psi_{\lambda_{1}}^{p-2}-\sigma_{\lambda_{1}}\right)u_{\lambda_{1}}\geq \gamma w_{\lambda_{1}}\big(u^{\lambda_{1}}\big)^{p-1}>0 \quad \text { in } \Sigma_{\lambda_{1}},
\end{equation}
hence the strong maximum principle implies
\begin{equation}\label{5-25}
  u_{\lambda_{1}}>0 \qquad \text{in} \,\, \Sigma_{\lambda_{1}}.
\end{equation}
Thus by Hopf Lemma, we get
\begin{equation}\label{5-22}
\frac{\partial u_{\lambda_{1}}}{\partial x_{1}}=-2 \frac{\partial u}{\partial x_{1}}>0 \quad \text { on } \,\, T_{\lambda_{1}}.
\end{equation}

By \eqref{conditions}, \eqref{5-0}, \eqref{5-1}, \eqref{5-7} and \eqref{5-13}, we can fix a $R>1$ sufficiently large such that
\begin{equation}\label{5-23}
\gamma(p-1)w\psi_{\mu}^{p-2}+\sigma_{\mu}\leq 0 \qquad \text { in } \,\, \Sigma^{-}_{\mu}\setminus B_{R}(0), \quad \forall \,\, \mu \in \mathbb{R},
\end{equation}
and
\begin{equation}\label{5-24}
C\left(\int_{\mathbb{R}^{2}\setminus B_{R}(0)}\left(y_{1}-\mu\right)^{2}u^{2(p-1)}(y)dy\right)^{\frac{1}{2}}\|u\|^{p-1}_{L^{\infty}(\mathbb{R}^{2})}
<\frac{a_{0}}{2}, \qquad \forall \,\, \mu\in[\lambda_{1}-1,\lambda_{1}],
\end{equation}
where the constant is the same as in \eqref{5-12}, \eqref{5-15} and \eqref{5-16}.

By \eqref{5-25}, \eqref{5-22} and the continuity of $u$ and $\frac{\partial u}{\partial x_{1}}$, we can prove that there exists a $0<\varepsilon<1$ small enough such that
\begin{equation}\label{5-26}
u_{\mu}\geq 0 \qquad \text { in } \Sigma_{\mu}\cap B_{R}(0), \quad \forall \,\, \mu \in(\lambda_{1}-\varepsilon,\lambda_{1}].
\end{equation}
Indeed, suppose on contrary that there exists a sequence $\mu_{n}\rightarrow\lambda_{1}$ such that $u_{\mu_{n}}<0$ in $\Sigma_{\mu_{n}}\cap B_{R}(0)$. Then there exists a sequence of points $z_{n}\in\Sigma_{\mu_{n}}\cap B_{R}(0)$ such that
\begin{equation}\label{5-27}
  u_{\mu_{n}}(z_{n})<0 \qquad \text{and} \qquad \frac{\partial u_{\mu_{n}}}{\partial x_{1}}(z_{n})\leq0.
\end{equation}
After passing to a subsequence, we may assume $z_{n}\rightarrow z_{0}\in \overline{\Sigma_{\lambda_{1}}\cap B_{R}(0)}$. By the continuity of $u$ and \eqref{5-25}, we have $z_{0}\in T_{\lambda_{1}}$. Thus by the continuity of $\frac{\partial u}{\partial x_{1}}$, it follows that
\begin{equation}\label{5-28}
  \frac{\partial u_{\lambda_{1}}}{\partial x_{1}}(z_{0})=\lim_{n\rightarrow\infty}\frac{\partial u_{\mu_{n}}}{\partial x_{1}}(z_{n})\leq0,
\end{equation}
which contradicts with \eqref{5-22}. Therefore, \eqref{5-26} must hold.

Similar to the proof of \eqref{5-15}, multiplying the first equation in system \eqref{5-5} by $u^{-}_{\mu}$ and integrating, we can deduce from \eqref{5-12}, \eqref{5-23}, \eqref{5-24} and \eqref{5-26} that, for any $\mu \in(\lambda_{1}-\varepsilon,\lambda_{1}]$,
\begin{eqnarray}\label{5-29}
&&a_{0}\left\|u_{\mu}^{-}\right\|_{L^{2}\left(\Sigma_{\mu}\right)}^{2}\leq\int_{\Sigma^{-}_{\mu}\setminus B_{R}(0)}\left[\gamma w_{\mu} \big(u^{\mu}\big)^{p-1}u_{\mu}^{-}+\left(\gamma(p-1)w\psi_{\mu}^{p-2}+\sigma_{\mu}\right)\left(u_{\mu}^{-}\right)^{2}\right]dx \\
\nonumber &\leq& \gamma\int_{\Sigma^{-}_{\mu}\setminus B_{R}(0)} w_{\mu}^{-}\big(u^{\mu}\big)^{p-1}u_{\mu}^{-}dx
\leq\gamma\left\|w_{\mu}^{-}\right\|_{L^{2}\left(\Sigma_{\mu}\right)}\big\|u^{\mu}\big\|^{p-1}_{L^{\infty}\left(\Sigma_{\mu}\right)}
\left\|u_{\mu}^{-}\right\|_{L^{2}\left(\Sigma_{\mu}\right)} \\
\nonumber &\leq& C\left(\int_{\Sigma^{-}_{\mu}}\left(y_{1}-\mu\right)^{2}u^{2(p-1)}(y)dy\right)^{\frac{1}{2}}\|u\|^{p-1}_{L^{\infty}\left(\mathbb{R}^{2}\right)}
\left\|u_{\mu}^{-}\right\|_{L^{2}\left(\Sigma_{\mu}\right)}^{2}<\frac{a_{0}}{2}\left\|u_{\mu}^{-}\right\|_{L^{2}\left(\Sigma_{\mu}\right)}^{2}.
\end{eqnarray}
As a consequence, \eqref{5-29} implies that $u^{-}_{\mu}\equiv0$, i.e., $u_{\mu}\geq0$ in $\Sigma_{\mu}$ for any $\mu \in(\lambda_{1}-\varepsilon,\lambda_{1}]$. This is a contradiction with the definition of $\lambda_{1}$. Therefore, \eqref{5-19} must hold, that is, $u$ and $w$ are symmetric and strictly decreasing with respect to the hyperplane $\{x\in\mathbb{R}^{2}: \, x_{1}=\lambda_{1}\}$.

Repeating the same moving plane procedure with $x_{1}$-coordinate direction replaced by the second coordinate direction $x_{2}$, we will also find a $\lambda_{2}\in\mathbb{R}$ such that $u$ and $w$ are symmetric and strictly decreasing with respect to the hyperplane $\{x\in\mathbb{R}^{2}: \, x_{2}=\lambda_{2}\}$. By the strictly decreasing property, it is clear that every symmetry hyperplane of $u$ and $w$ must contain the point $x_{0}:=(\lambda_{1},\lambda_{2})\in\mathbb{R}^{2}$. Consequently, by repeating the moving plane procedure in an arbitrary direction in place of the $x_{1}$-coordinate direction, we can obtain that $u$ and $w$ must be symmetric and strictly decreasing with respect to any hyperplane containing the point $x_{0}$, and hence radially symmetric and strictly decreasing with respect to the point $x_{0}$. This concludes our proof of Theorem \ref{thm2}.
\end{proof}

\begin{proof}[Proof of Corollary \ref{cor2}]
Assume $p\geq2$ and $q\geq2$. Let $u\in X$ be a positive solution to the Schr\"{o}dinger-Newton equation \eqref{S-N} with constant $a>0$. By Lemma \ref{lemma1}, $(u,w)$ with
$$w:=-\frac{1}{2\pi}\int_{\mathbb{R}^{2}}\log(|x-y|)u^{p}(y)dy$$
is a classical solution to the Schr\"{o}dinger-Poisson system \eqref{g-S-P} satisfying the condition \eqref{conditions}, where the locally Lipschitz function $f(u):=b|u|^{q-2}u-au$. Therefore, Corollary \ref{cor2} follows immediately from Theorem \ref{thm2}.
\end{proof}


\begin{thebibliography}{99}

\bibitem{A} S. Agmon, {\it Lectures on Exponential Decay of Solutions of Second-order Elliptic Equations: Bounds on Eigenfunctions of N-Body Schr\"{o}dinger Operators}, Math. Notes, vol. \textbf{29}, Princeton University Press, Princeton, NJ, 1982.

\bibitem{AR} A. Ambrosetti and D. Ruiz, {\it Multiple bound state for the Sch\"{o}dinger-Poisson problem}, Commun. Contemp. Math., \textbf{10} (2008), no. 3, 391-404.

\bibitem{BS} L. Battaglia and J. V. Schaftingen, {\it Groundstates of the Choquard equations with a sign-changing self-interaction potential}, Z. Angew. Math. Phys., \textbf{69} (2018): 86.

\bibitem{BL} H. Berestycki and P. L. Lions, {\it Nonlinear scalar field equations I}, Arch. Ration. Mech. Anal., \textbf{82} (1983), no. 4, 313-345.

\bibitem{BCS} D. Bonheure, S. Cingolani and J. V. Schaftingen, {\it The logarithmic Choquard equation: Sharp asymptotics and nondegeneracy of the groundstate}, J. Funct. Anal., \textbf{272} (2017), no. 12, 5255-5281.

\bibitem{B} H. Brezis, {\it Functional analysis, Sobolev spaces and partial differential equations}, Universitext. Springer, New York, 2011.

\bibitem{CD} D. Cao and W. Dai, {\it Classification of nonnegative solutions to a bi-harmonic equation with Hartree type nonlinearity}, Proc. Royal Soc. Edinburgh-A: Math., \textbf{149} (2019), 979-994.

\bibitem{CL1} D. Cao and H. Li, {\it High energy solutions of the Choquard equation}, Disc. Cont. Dyn. Syst. - A, \textbf{38} (2018), no. 6, 3023-3032.

\bibitem{CL} W. Chen and C. Li, {\it Classification of solutions of some nonlinear elliptic equations}, Duke Math. J., \textbf{63} (1991), no. 3, 615-622.

\bibitem{CLO} W. Chen, C. Li and B. Ou, {\it Classification of solutions for an integral equation}, Comm. Pure Appl. Math., \textbf{59} (2006), 330-343.

\bibitem{CS} P. Choquard and J. Stubbe, {\it The one-dimensional Schr\"{o}dinger-Newton equations}, Lett. Math. Phys., \textbf{81} (2007), no. 2, 177-184.

\bibitem{CSV} P. Choquard, J. Stubbe and M. Vuffray, {\it Stationary solutions of the Schr\"{o}dinger-Newton model - an ODE approach}, Differential Integral Equations, \textbf{21} (2008), no. 7-8, 665-679.

\bibitem{CW} S. Cingolani and T. Weth, {\it On the planar Schr\"{o}dinger-Poisson system}, Ann. Inst. H. Poincar\'{e} Anal. Non Lin\'{e}aire, \textbf{33} (2016), no. 1, 169-197.

\bibitem{CP} M. Clapp and D. Puppe, {\it Critical point theory with symmetries}, J. Reine Angew. Math., \textbf{418} (1991), 1-29.

\bibitem{DFHQW} W. Dai, Y. Fang, J. Huang, Y. Qin and B. Wang, {\it Regularity and classification of solutions to static Hartree equations involving fractional Laplacians}, Disc. Cont. Dyn. Syst. - A, \textbf{39} (2019), no. 3, 1389-1403.

\bibitem{DFQ} W. Dai, Y. Fang and G. Qin, {\it Classification of positive solutions to fractional order Hartree equations via a direct method of moving planes}, J. Differential Equations, \text{265} (2018), no. 5, 2044-2063.

\bibitem{DL} W. Dai and Z. Liu, {\it Classification of nonnegative solutions to static Schr\"{o}dinger-Hartree and Schr\"{o}dinger-Maxwell equations with combined nonlinearities}, Calc. Var. \& PDEs, \textbf{58} (2019), no. 4, Art. 156, 24 pp.

\bibitem{DLQ} W. Dai, Z. Liu and G. Qin, {\it Classification of nonnegative solutions to static Schr\"{o}dinger-Hartree-Maxwell type equations}, preprint, submitted for publication, arXiv: 1909.00492.

\bibitem{DQ} W. Dai and G. Qin, {\it Classification of nonnegative classical solutions to third-order equations}, Adv. Math., \textbf{328} (2018), 822-857.

\bibitem{DQ0} W. Dai and G. Qin, {\it Classification of positive smooth solutions to third-order PDEs involving fractional Laplacians}, Pacific J. Math., \textbf{295} (2018), no. 2, 367-383.

\bibitem{DW} M. Du and T. Weth, {\it Ground states and high energy solutions of the planar Schr\"{o}dinger-Poisson system}, Nonlinearity, \textbf{30} (2017), no. 9, 3492-3515.

\bibitem{FL} J. Frohlich and E. Lenzmann, {\it Mean-field limit of quantum bose gases and nonlinear Hartree equation}, in: Sminaire E. D. P. (2003-2004), Expos nXVIII. 26p.

\bibitem{GNN} B. Gidas, W. Ni and L. Nirenberg, {\it Symmetry and related properties via maximum principle}, Comm. Math. Phys., \textbf{68} (1979), 209-243.

\bibitem{GT} D. Gilbarg and N. S. Trudinger, {\it Elliptic partial differential equations of second order}, 2nd Ed., Springer-Verlag, New York, 1983.

\bibitem{HIT} J. Hirata, N. Ikoma and K. Tanaka, {\it Nonlinear scalar field equations in $\mathbb{R}^{N}$: mountain pass, symmetric mountain pass approaches}, Topol. Methods Nonlinear Anal., \textbf{35} (2010), 253-276.

\bibitem{J} L. Jeanjean, {\it Existence of solutions with prescribed norm for semilinear elliptic equations}, Nonlinear Analysis - TMA, \textbf{28} (1997), no. 10, 1633-1659.

\bibitem{L} E. H. Lieb, {\it Existence and uniqueness of the minimizing solution of Choquard's nonlinear equation}, Studies in Appl. Math., \textbf{57} (1976/77), no. 2, 93-105.

\bibitem{LL} E. H. Lieb and M. Loss, {\it Analysis}, Second edition, Graduate Studies in Mathematics, \textbf{14}, American Mathematical Society, Providence, RI, 2001.

\bibitem{Lions} P. L. Lions, {\it Solutions of Hartree-Fock equations for Coulomb systems}, Commun. Math. Phys., \textbf{109} (1984), 33-97.

\bibitem{Lions1} P. L. Lions, {\it The concentration-compactness principle in the calculus of variations. The locally compact case, parts1 and 2}, Ann. Inst. H. Poincar\'{e} Anal. Non Lin\'{e}aire., \textbf{1} (1984), no. 2, 109-145, no. 4, 223-283.

\bibitem{LW} G. Li and C. Wang, {\it The existence of a nontrivial solution to a nonlinear elliptic problem of linking type without the Ambrosetti-Rabinowitz condition}, Ann. Acad. Sci. Fenn. Math., \textbf{36} (2011), no. 2, 461-480.

\bibitem{Ma} S. Masaki, {\it Energy solution to a Schr\"{o}dinger-Poisson system in the two-dimensional whole space}, SIAM J. Math. Anal., \textbf{43} (2011), no. 6, 2719-2731.

\bibitem{MZ} L. Ma and L. Zhao, {\it Classification of positive solitary solutions of the nonlinear Choquard equation}, Arch. Rational Mech. Anal., \textbf{195} (2010), no. 2, 455-467.

\bibitem{MPT} I. M. Moroz, R. Penrose and P. Tod, {\it Spherically-symmetric solutions of the Schr\"{o}dinger-Newton equations}, Topology of the Universe Conference (Cleveland, OH, 1997), Classical Quantum Gravity \textbf{15} (1998), no. 9, 2733-2742.

\bibitem{MS} V. Moroz and J. Van Schaftingen, {\it Ground states of nonlinear Choquard equations: existence, qualitative properties and decay asymptotics}, J. Funct. Anal., \textbf{265} (2013), no. 2, 153-184.

\bibitem{MS1} V. Moroz and J. Van Schaftingen, {\it Existence of groundstates for a class of nonlinear Choquard equations}, Trans. Amer. Math. Soc., \textbf{367} (2015), no. 9, 6557-6579.

\bibitem{Mu} D. Mugnai, {\it The Schr\"{o}dinger-Poisson System with Positive Potential}, Commun. PDEs, \textbf{36} (2013), 1009-1117.

\bibitem{Pekar} S. I. Pekar, {\it Untersuchungen \"{u}ber die Elektronentheorie der Kristalle}, Akademie-Verlag, Berlin, 1954.

\bibitem{P} R. Penrose, {\it On gravity's role in quantum state reduction}, Gen. Relativ. Gravit., \textbf{28} (1996), no. 5, 581-600.

\bibitem{R} D. Ruiz, {\it The Schr\"{o}dinger-Poisson equation under the effect of a nonlinear local term}, J. Funct. Anal., \textbf{137} (2006), 655-674.

\bibitem{S} M. Struwe, {\it Variational methods. Applications to nonlinear partial differential equations and Hamiltonian systems}, Second edition, Springer-Verlag, Berlin, 1996.

\bibitem{Stubbe} J. Stubbe, {\it Bound states of two-dimensional Schr\"{o}dinger-Newton equations}, arXiv:0807.4059, 2008.

\bibitem{TT} B. Thomas and T. Weth, {\it Three nodal solutions of singularly perturbed elliptic equations on domains without topology}, Ann. Inst. H. Poincar\'{e} Anal. Non Lin\'{e}aire, \textbf{22} (2005), no. 3, 259-281.

\bibitem{W} Michel Willem, {\it Minimax Theorems}, Birkh\"{a}user Boston, 1996.
\end{thebibliography}
\end{document}